\documentclass[final,1p]{elsarticle}

\usepackage{amssymb}
\usepackage{amsmath}
\usepackage{amsthm}
 \usepackage{lineno}
 \usepackage{empheq} 

\usepackage{latexsym}
\usepackage[ruled,lined]{algorithm2e}
\usepackage{algorithmic}
\usepackage{array}
\usepackage[colorlinks,linkcolor=blue, anchorcolor=blue, citecolor=blue]{hyperref}
\usepackage{color,tabularx}
\usepackage{booktabs}
\usepackage{subfigure}
\usepackage{multirow}
\usepackage{stmaryrd}
\usepackage[T1]{fontenc}
\biboptions{numbers,sort&compress}
\usepackage{graphicx} 
\usepackage{adjustbox}

\newcommand{\f}{\frac}
\newcommand{\p}{\partial}

\newcommand{\mG}{\mathcal{G}}
\newcommand{\mQ}{\mathcal{Q}}
\newcommand{\mM}{\mathcal{M}}
\newcommand{\mbD}{\mathbb{ D }}
\numberwithin{equation}{section}
\allowdisplaybreaks
\newtheorem{theorem}{Theorem}[section]
\newtheorem{lemma}[theorem]{Lemma}
\newtheorem{corollary}[theorem]{Corollary}

\theoremstyle{definition}

\theoremstyle{remark}
\newtheorem{remark}{Remark}

\journal{XXXX}
\begin{document}

\begin{frontmatter}


\title{A variable time-step, second-order, and MBP-preserving linear stabilized scheme for the time-fractional Allen--Cahn equation}


\author[ouc]{Bingyin Zhang} \ead{byzhang2022@163.com}
\author[ouc]{Ao Zhang} \ead{zhangao6290@stu.ouc.edu.cn}
\author[ouc,lab]{Hongfei Fu\corref{cor1}} \ead{fhf@ouc.edu.cn}
\address[ouc]{School of Mathematical Sciences, Ocean University of China, Qingdao, Shandong 266100, China}
\address[lab]{Laboratory of Marine Mathematics, Ocean University of China, Qingdao, Shandong 266100, China}
\cortext[cor1]{Corresponding author.}

\begin{abstract}
In this paper, we present a second-order linear scheme based on the variable-step Alikhanov formula and central difference discretization for the time-fractional Allen--Cahn equation. The nonlinear potential is treated explicitly via a second-order extrapolation with preprocessing, which enables the discrete maximum-bound principle (MBP) to be preserved through an appropriate stabilization technique. Moreover, by developing a discrete fractional Gr\"onwall inequality together with the uniform boundedness of numerical solutions guaranteed by the MBP, we establish an $\alpha$-robust and optimal second-order maximum-norm error estimate under initial weak singularity assumption. In addition, energy stability is proved in the sense that the discrete original energy is uniformly bounded by the initial energy plus a high-order spatiotemporal correction term. Finally, extensive numerical experiments are presented to demonstrate the effectiveness of the proposed scheme.
\end{abstract}


\begin{keyword}
Time-fractional Allen--Cahn equation \sep Variable-step Alikhanov method \sep MBP 
\sep Discrete energy stability \sep Maximum-norm error estimate
\end{keyword}

\end{frontmatter}


\section{Introduction}

To model the motion of the anti-phase boundaries in crystalline solids, Allen and Cahn \cite{Acta_Allen_1979} proposed the classical Allen--Cahn equation
\begin{equation}\label{Model:AC}
\p_t \phi = \varepsilon^2 \Delta \phi + f( \phi ), \quad t >0, \  \mathbf{x} \in \Omega,
\end{equation}
subject to the initial condition $ \phi( \mathbf{x}, 0 ) = \phi_{\text{init}} ( \mathbf{x} ) $, where $\Omega \subset \mathbb{R}^{d} $ ($ d \le 3 $) is a smooth domain with boundary $ \partial \Omega $. The order parameter $ \phi $ represents the concentration difference in a binary system, $ 0 < \varepsilon \ll 1 $ is a small parameter characterizing the interfacial width. The nonlinear potential function $ F(\phi) $, satisfying $  f(\phi) = -F'(\phi) $, is assumed to be bistable; a typical example is the double-well potential $ F(\phi) = \frac{1}{4} ( 1 - \phi^{2} )^{2} $, see, e.g., \cite{JSC_Du_2020,JCP_Liao_2020}. Equipped with periodic or homogeneous Neumann boundary condition, the classical Allen--Cahn equation \eqref{Model:AC} can be viewed as an $ L^{2} $ gradient flow of a free energy $E[\phi] $, i.e.,
$$
\p_t \phi := - \frac{ \delta E }{ \delta \phi } \qquad \text{with} \  E[\phi] := \int_{\Omega} \Big( \frac{\varepsilon^2}{2} \vert \nabla \phi(\mathbf{x}) \vert^2 + F( \phi(\mathbf{x}) ) \Big) d \mathbf{x}.
$$
It is well known that for the classical Allen--Cahn equation \eqref{Model:AC}, the energy dissipation law $ E[\phi] (t) \leq E[\phi](s) $ for $ t > s $, and the maximum bound principle (MBP)
\begin{equation}\label{MBP}
	\text{if}~~ \max_{\mathbf{x} \in \bar{\Omega}} |\phi_{\text {init }}(\mathbf{x})| \leq 1 \quad \Longrightarrow 
	\quad \max_{\mathbf{x} \in \bar{\Omega}}|\phi(\mathbf{x},t)| \leq 1, \quad \forall t>0,
\end{equation}
hold. Since its inception, the model has been widely employed to simulate a broad range of phenomena, including interface dynamics in multiphase fluids \cite{CiCP_2012_Kim,SISC_2010_Shen,JCP_2006_Yang}, mean curvature flow \cite{NM_2003_Feng,JDG_1993_Ilmanen}, image segmentation \cite{ANM_2004_Benes,LiuQiaoZhang}, as well as numerous applications in materials science.

In heterogeneous porous media, diffusive transport is strongly influenced by interactions between the rock matrix and solute molecules. In such an environment, a significant fraction of solute molecules may become adsorbed within the micropores of the rock matrix \cite{MS_Sharma_2015}. The residence times of these adsorbed molecules can differ substantially from those of molecules in the mobile (bulk) phase \cite{JCP_Zhokn_2017,WRR_Schumer_2003}, leading to anomalous subdiffusive transport behavior. This regime is characterized by a sublinear growth of the particle mean square displacement $ < r^2 > $ with respect to $t$ \cite{PRL_Chepizhko_2013}. Such phenomena can be naturally described by subdiffusive (i.e., time-fractional) phase-field models; see, for example, \cite{CPC_Wang_2019,JCP_Wang_2017,SISC_Tang_2019}. The incorporation of nonlocal time-fractional derivative operators into phase-field equations has been shown to substantially alter the underlying diffusive dynamics, and thus has gained great research interests \cite{JCP_Wang_2017,SISC_Tang_2019,JCP_Zhang_2025,JSC_Huang_2023}. In this paper, we focus on the following time-fractional Allen--Cahn (tFAC) equation
\begin{equation}\label{Model:tAC}
	\begin{aligned}
		\p_t^{\alpha} \phi = \varepsilon^2 \Delta \phi + f( \phi ), \quad t >0, \  \mathbf{x} \in \Omega,
	\end{aligned}
\end{equation}
where the Caputo fractional derivative of order $ \alpha \in (0,1) $ is defined by
\begin{equation*}
	\p_t^{\alpha} v = \int^{t}_{0} \omega_{1-\alpha} (t-s)\partial_s v (s) ds,~~
	\omega_{\mu}(t) := \frac{t^{\mu-1}}{\Gamma(\mu)}.
\end{equation*}
The tFAC equation can also be interpreted as an $ L^{2} $  gradient flow in the fractional setting, namely, $ \p_t^{\alpha} \phi := - \frac{ \delta E }{ \delta \phi } $. It was shown in \cite{SISC_Tang_2019} that the tFAC equation obeys a ‘weak version’ of the classical energy dissipation law, i.e., $E[\phi](t) \leq E[\phi](0) $ for all $ t >0 $. Moreover, it also preserves the MBP \eqref{MBP}; see \cite{JSC_Du_2020,JCP_Zhang_2025}. Therefore, to ensure long-term stable numerical simulations and avoid nonphysical solutions for the modeling  of the tFAC model, it is highly desirable for numerical methods to preserve both of these inherent physical properties.

In \cite{SISC_Tang_2019}, the authors proposed a  first-order linear stabilized $L1$ scheme for the tFAC equation \eqref{Model:tAC}, which is both energy-stable and MBP-preserving. By discretizing the fractional derivative via the backward Euler convolution quadrature, Du et al. \cite{JSC_Du_2020} developed several structure-preserving time-discrete schemes, including weighted convex splitting scheme and a linear weighted stabilized scheme, and rigorously proved that these two schemes are MBP-preserving and fractional energy-stable, i.e., $E[\phi^n] \leq E[\phi^{n,\alpha}] $, where $\phi^{n,\alpha}$ is a convex combination of $\phi^0, \phi^1, \ldots, \phi^{n-1}$, and  meanwhile,  these time-stepping schemes are proved to achieve a convergence rate of order $O(\tau^{\alpha})$. In recent years, the widely used scalar auxiliary variable (SAV) approach \cite{JCP_Shen_2018} has also been extensively employed to construct high-order linear schemes that are unconditionally (modified) energy stable for time-fractional phase-field models, including but not limited to the tFAC equation; see, for example, \cite{SISC_Ji_2020,SISC_Hou_2021,SISC_Zhao_2024,JCP_Quan_2022}.

With the aim of developing much higher-order ($>$1) time-stepping schemes that preserve the discrete MBP, several approaches have been proposed and analyzed for the tFAC model \eqref{Model:tAC}, including the $L1$ scheme \cite{ACM_Ji_2020}, the $L1_{R}$ scheme \cite{SISC_Liao_2021}, the Alikhanov scheme  \cite{JCP_Liao_2020,JSC_Liao_2024}, and the SFTR (shifted fractional trapezoidal rule) scheme \cite{JSC_Huang_2023}. In particular, the authors in \cite{JCP_Liao_2020} proposed a nonlinear and nonuniform time-stepping Alikhanov scheme, and rigorously proved that it is second-order accurate in time and preserves the discrete MBP. More recently, by developing a novel discrete gradient structure for the Alikhanov formula via a local-nonlocal splitting technique \cite{JSC_Liao_2024}, this nonlinear scheme was further shown to satisfy a discrete energy dissipation law with respect to an asymptotically compatible energy. However, it is worth noting that all these mentioned high-order MBP-preserving schemes  are nonlinear. As a consequence, nonlinear iterative solvers must be employed at each time step, which inevitably increases the computational cost. In \cite{JCAM_Hu_2026}, a slightly different nonlinear Alikhanov scheme was proposed, together with a stabilized linear iterative method. The authors proved that the iterative scheme also preserves the MBP and converges to the solution of the underlying nonlinear scheme. Leveraging the exponential SAV approach, we proposed a linear, MBP-preserving second-order variable-step BDF2 scheme for the classical Allen--Cahn equation in \cite{JSC_ZFLX_2025}, in which a novel auxiliary functional and stabilization approach is developed; and further, in Ref. \cite{JCP_Zhang_2025} we presented some linear, nonuniform time-stepping $ L1 $ and Alikhanov schemes that are energy-stable and MBP-preserving. Nevertheless, this approach still suffers from two limitations: (i) the introduced SAV inherits the initial weak singularity of $\phi$, making the rigorous error analysis more challenging; and (ii) 
only 'modified' energy stable  can be guaranteed, which may cause a large discrepancy with the original energy for long-term simulations. These considerations motivate us to develop some fully linear, higher-order, and structure-preserving numerical schemes for the tFAC equation, together with rigorous theoretical analysis.

To this end, in this work we present a variable time-step second-order Alikhanov scheme by introducing a simple pre-processing procedure, which enjoys the following three main advantages:
\begin{itemize}
\item the proposed scheme is fully linear and preserves the discrete MBP, and is therefore computationally efficient;

\item under the graded temporal mesh with grading parameter $\gamma \ge 1$, the optimal temporal convergence order $\min\{2,\gamma\alpha\}$ can be rigorously established under the weak singularity regularity assumptions:
\begin{equation}\label{Assum:regu}
    \| \phi(t) \|_{W^{4,\infty}} \le C_{\phi}, \quad \| \partial_t^{\ell} \phi(t) \|_{W^{2,\infty}} \le C_{\phi} ( 1 + t^{\alpha-\ell} ) \quad \text{for} \   \ell= 1,2,3.
\end{equation}

\item the scheme is energy stable with respect to the original energy, in the sense that the discrete original energy can be uniformly bounded by the initial energy up to a high-order perturbation term.
\end{itemize}

The remainder of the paper is organized as follows. In the next section, we present some preliminaries. A variable time-step second-order linear scheme is developed in Section \ref{Sec:sch_mbp}, where the discrete MBP is established. Based on the proved discrete MBP, Section \ref{Sec:error_energy} presents maximum-norm error estimates and analyzes discrete energy stability. Extensive numerical experiments are presented in Section \ref{Sec:Numer}, and concluding remarks are offered in the final section. 
Throughout the paper, we denote by $C$ (with or without subscript) a generic $\alpha$-robust positive constant, which may depend on the given data, but is independent of the mesh sizes and can be different under different circumstances.
\section{Preliminaries}
In this section, we shall present some preliminaries.

\subsection{The nonuniform time-stepping Alikhanov formula}
Throughout the paper, we consider (generally nonuniform) time levels $ 0 = t_{0} < t_{1} < \cdots < t_{N} = T $ with time stepsize $ \tau_{k} := t_{k} - t_{k-1} $ for $ 1 \leq k \leq N $, and define the maximum stepsize $ \tau := \max_{1\leq k \leq N} \tau_{k} $. Denote the off-set time level $ t_{k-\theta} := ( 1 - \theta ) t_{k} + \theta t_{k-1} $ for $ k \geq 1 $. Let $ r_{k} := \tau_{k}/\tau_{k-1} (k\geq 2)$ be the adjacent time-step ratio such that $r_{*}\le r_{k} \le r^{*}$, where $ r_{*} $ and $ r^{*} $ are the minimum and maximum time-step ratio, respectively. Moreover, we assume the following condition:
\begin{description}
	 \item[(M1)] The minimum time-step ratio $ r_{*} = 4/7 $, and the maximum time-step ratio $ r^{*} $ is bounded.
\end{description}
This assumption means that one may employ a series of decreasing time-steps with the reduction factor down to $ r_{*} $ (which ensures some useful properties of the  Alikhanov formula; see Lemmas \ref{lem:L21_DC}--\ref{lem:L21_DC_positive}), and increasing time-steps with the amplification factor up to $ r^{*} $ (which affects the time-step restriction in error analysis and energy stability; see Theorems \ref{thm:Convergence} and \ref{thm:energy}). The use of nonuniform meshes allows us to adopt an efficient adaptive time-stepping strategy \cite{SISC_Qiao_2011,JCP_Liao_2020} to capture the dynamics process, admiting multiple time scales, governed by the tFAC model. Moreover, it is well known that solutions to subdiffusion problems usually exhibit weak singularities near the initial time, and are smooth away from $ t = 0 $, see \cite{IMA_Jin_2016,SINUM_Stynes_2017}, which thus may lead to a loss of accuracy in numerical simulations based upon  uniform temporal meshes. However, the special designed nonuniform temporal meshes, e.g., the graded mesh $ t_{k} = T( k/N )^{\gamma} $, can alleviate the defection.

For the sake of convergence analysis, we also need the following condition \cite{JCP_Liao_2020,SINUM_Mustapha_2014}:
\begin{description}
	\item[(M2)] For parameter $ \gamma \geq 1 $, there exist mesh-independent constants $ C_{1\gamma}, C_{2\gamma}, C_{3\gamma}, C_{4\gamma} > 0 $ such that $ \tau_{k} \leq \tau \min\{ 1, C_{1\gamma} t_{k}^{1-1/\gamma} \} $ for $ 1 \leq k \leq N $, $ t_{k} \leq C_{2\gamma} t_{k-1} $ for $ 2 \leq k \leq N $, with $ \tau_{1} \leq C_{3\gamma} N^{-\gamma} $ and $ \tau \leq C_{4\gamma} N^{-1} $.
\end{description}
Here, the parameter $ \gamma \geq 1 $ controls the extent to which the time levels are concentrated near the initial time $ t = 0 $. A simple example of a family of meshes satisfying {\bf M2} is the graded mesh.

Let $w^k = w(t_k)$, $w^{k-\theta} = w(t_{k-\theta})$ and $\bar{w}^{k,\theta} = ( 1 - \theta ) w^k + \theta w^{k-1} $. Define the temporal difference operator $\nabla_\tau v^k=v^k-v^{k-1}$ and $\mbD_{\tau} v^{k} = \nabla_\tau v^k/\tau_{k}$ for $k \geq 1$. The nonuniform time-stepping Alikhanov formula of the Caputo derivative at $ t = t_{n- \theta}  $ with $ \theta = \alpha/2 $ is given by \cite{JCP_Alikhanov_2015,JSC_Chen_2019,JCP_Liao_2020,JSC_Fu_2023}:
\begin{equation*}\label{Formula:L21}
	\begin{aligned}
		\p_t^{\alpha} w ( t_{n- \theta} ) \approx\mbD_\tau^\alpha w^n 
		:=  \sum_{k=1}^n A_{n-k}^{(n)} \nabla_\tau w^k,
	\end{aligned}
\end{equation*}
where the discrete convolution kernels $ A_{n-k}^{(n)} $ are defined as follows: $ A^{(1)}_{0} = a_{0}^{(1)} $ if $ n = 1 $, and for $ n \geq 2 $
\begin{equation*}\label{L21:kernel}
	A^{(n)}_{n-k} := \left\{
	\begin{aligned}
		& a^{(n)}_{0} + b^{(n)}_{1}/r_{n}, \qquad \qquad \qquad \qquad \qquad \    k = n,\\
		& a^{(n)}_{n-k} + b^{(n)}_{n-k+1}/r_{k} - b^{(n)}_{n-k}, \qquad \quad   2 \leq k \leq n-1, \\
		& a^{(n)}_{n-1} - b^{(n)}_{n-1}, \qquad  \qquad \qquad \qquad \qquad  \  \   \  \ k = 1,
	\end{aligned}
	\right.
\end{equation*}
with the discrete coefficients
\begin{equation*}
	\begin{aligned}
		a^{(n)}_{n-k} := \frac{1}{\tau_{k}} \int^{\min\{t_{k},t_{n- \theta}\}}_{t_{k-1}} \omega_{1-\alpha}(t_{n- \theta}-s) ds, \qquad 1 \leq k \leq n,
	\end{aligned}
\end{equation*}
\begin{equation*}
	\begin{aligned}
		b^{(n)}_{n-k} := \frac{2}{\tau_{k}\left(\tau_{k}+\tau_{k+1}\right)} \int^{t_{k}}_{t_{k-1}} \big(s-t_{k-\frac{1}{2}}\big)\omega_{1-\alpha}(t_{n- \theta}-s) ds, \qquad 1 \leq k \leq n-1.
	\end{aligned}
\end{equation*}
Furthermore, it is shown in \cite{CiCP_Liao_2021,JCP_Liao_2020} that the discrete convolution kernels satisfy the following properties, i.e.,
\begin{lemma}[\cite{JCP_Liao_2020}]\label{lem:L21_DC}
Under condition {\bf M1}, there holds
\begin{description}
	\item[(i)] the discrete kernels $A_{n-k}^{(n)}$ fulfill $A_0^{(n)} \leq \frac{24}{11 \tau_n} \int_{t_{n-1}}^{t_n} \omega_{1-\alpha}\left(t_n-s\right)ds $ and
	$$
	A_{n-k}^{(n)} \geq \frac{4}{11 \tau_n} \int_{t_{n-1}}^{t_n} \omega_{1-\alpha}\left(t_n-s\right) \mathrm{d} s, \quad 1 \leq k \leq n;
	$$
	
	\item[(ii)] the discrete kernels $A_{n-k}^{(n)}$ are monotone for $1 \leq k \leq n-1$, i.e.,
	$$
	A_{n-k-1}^{(n)}-A_{n-k}^{(n)} \geq\left(1+1/r_{k}\right) b_{n-k}^{(n)}-\frac{1}{5 \tau_k} \int_{t_{k-1}}^{t_k}\left(t_k-s\right) \omega_{-\alpha}\left(t_{n-\theta}-s\right) \mathrm{d} s>0;
	$$
	
	\item[(iii)] the first kernel $A_0^{(n)}$ is appropriately larger than the second one, i.e.,
	$$
	\frac{1-2 \theta}{1-\theta} A_0^{(n)} - A_1^{(n)}>0, \quad   n \geq 2.
	$$	
\end{description}
\end{lemma}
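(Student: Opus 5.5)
The plan is to work directly from the integral definitions of $a^{(n)}_{n-k}$ and $b^{(n)}_{n-k}$, using two facts about the kernel $\omega_{1-\alpha}(t_{n-\theta}-s)$ on each cell $[t_{k-1},t_k]$: it is positive and increasing in $s$, and it is convex. Throughout, $\theta=\alpha/2$ is fixed and the step ratios satisfy $r_k\ge 4/7$ by \textbf{M1}.

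\emph{Sign and size of $b$.} For $k\le n-1$, the weight $s-t_{k-1/2}$ in the definition of $b^{(n)}_{n-k}$ is odd about the midpoint, while the kernel is increasing. Hence $b^{(n)}_{n-k}>0$. Writing the kernel as its midpoint value plus a remainder, and integrating by parts once, gives
\[
b^{(n)}_{n-k}\le \frac{c}{\tau_k+\tau_{k+1}}\int_{t_{k-1}}^{t_k}(t_k-s)\,\omega_{-\alpha}(t_{n-\theta}-s)\,ds
\]
for an explicit constant $c$. The same integration by parts shows that $a^{(n)}_{n-k-1}-a^{(n)}_{n-k}$ is bounded below by averages of $\omega_{-\alpha}$ weighted by $(t_k-s)$.

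\emph{Part (ii).} Expand
\[
A^{(n)}_{n-k-1}-A^{(n)}_{n-k}=\bigl(a^{(n)}_{n-k-1}-a^{(n)}_{n-k}\bigr)+\bigl(b^{(n)}_{n-k}/r_{k+1}-b^{(n)}_{n-k-1}/r_{k}\bigr)+\dots
\]
and regroup so that the terms $(1+1/r_k)b^{(n)}_{n-k}$ appear explicitly. The terms that remain are differences of $a$'s and $b$'s on neighbouring cells. Using monotonicity and convexity of $\omega_{1-\alpha}$, I would show that these remainders exceed $-\frac{1}{5\tau_k}\int_{t_{k-1}}^{t_k}(t_k-s)\,\omega_{-\alpha}(t_{n-\theta}-s)\,ds$. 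The strict positivity of the right-hand side of (ii) then follows from the bound on $b$ above. The constant $1/5$ is where $r_k\ge 4/7$ enters. The cases $k=1$ and $k=n-1$ need separate checks, because the formulas for $A^{(n)}_{n-1}$ and $A^{(n)}_0$ differ; at $k=n-1$ the truncated cell $[t_{n-1},t_{n-\theta}]$ must be handled by a direct computation.

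\emph{Part (i).} For the lower bound, monotonicity (ii) gives $A^{(n)}_{n-k}\ge A^{(n)}_{n-1}$. It therefore suffices to bound $A^{(n)}_{n-1}=a^{(n)}_{n-1}-b^{(n)}_{n-1}$ from below. Comparing $a^{(n)}_{n-1}$ (an average of the kernel on the first cell) with $b^{(n)}_{n-1}$, and using $\omega_{1-\alpha}(t_{n-\theta}-s)\ge\omega_{1-\alpha}(t_n-s)$, reduces this to
\[
\frac{4}{11\tau_n}\int_{t_{n-1}}^{t_n}\omega_{1-\alpha}(t_n-s)\,ds=\frac{4\tau_n^{-\alpha}}{11\,\Gamma(2-\alpha)},
\]
since all $A$'s dominate a fixed fraction of the smallest kernel value. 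For the upper bound, compute
\[
a^{(n)}_0=\frac{(1-\theta)^{1-\alpha}\tau_n^{-\alpha}}{\Gamma(2-\alpha)},\qquad b^{(n)}_1/r_n\le \frac{C(r_n)\,\tau_n^{-\alpha}}{\Gamma(2-\alpha)},
\]
and bound the sum by $\frac{24}{11}\,\tau_n^{-\alpha}/\Gamma(2-\alpha)$. This uses $r_n\ge 4/7$, since $b^{(n)}_1/r_n$ is largest when $r_n$ is small.

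\emph{Part (iii).} This is a scalar inequality. Normalize by $\tau_n^{-\alpha}/\Gamma(2-\alpha)$, so that $A^{(n)}_0$ and $A^{(n)}_1$ become explicit functions of $\alpha$, $r_n$ and $r_{n-1}$ (through $b^{(n)}_1$ and $b^{(n)}_2$). Then verify positivity on $\alpha\in(0,1)$ and $r\ge 4/7$, using monotonicity of the resulting expression in the ratios.

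\emph{Main obstacle.} I expect the hardest step to be (ii) with the precise constant $1/5$, together with the sharp threshold $4/7$ in (i) and (iii); this is delicate one-variable calculus. My first attempt would be to reduce everything to explicit functions of $(\alpha,r)$ and check their monotonicity. Since the lemma is quoted from \cite{JCP_Liao_2020}, I would follow that derivation rather than reprove it.
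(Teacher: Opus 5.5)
The paper gives no proof of this lemma; it imports it from \cite{JCP_Liao_2020}, which builds on \cite{CiCP_Liao_2021}. Your closing decision to defer to that derivation therefore matches the paper. The one concrete argument you commit to, the lower bound in (i) via monotonicity, does not work.

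Monotonicity only gives $A^{(n)}_{n-k}\ge A^{(n)}_{n-1}$. But $A^{(n)}_{n-1}\le a^{(n)}_{n-1}\le\omega_{1-\alpha}(t_{n-\theta}-t_1)$ is a first-cell quantity of size about $t_n^{-\alpha}$. The target $\frac{4}{11\tau_n}\int_{t_{n-1}}^{t_n}\omega_{1-\alpha}(t_n-s)\,ds=\frac{4\tau_n^{-\alpha}}{11\Gamma(2-\alpha)}$ is a last-cell quantity. Replacing $t_{n-\theta}$ by $t_n$ on $[t_0,t_1]$ produces the first-cell average $\frac{1}{\tau_1}\int_{t_0}^{t_1}\omega_{1-\alpha}(t_n-s)\,ds$, which is smaller than the last-cell average, so your reduction runs the wrong way.

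In fact the inequality as printed is false for general $k<n$. Take a uniform mesh with $\alpha=1/2$, $n=3$, $k=1$. Then $A^{(3)}_2<a^{(3)}_2=(\sqrt{2.75}-\sqrt{1.75})\,\tau^{-\alpha}/\Gamma(2-\alpha)\approx0.335\,\tau^{-\alpha}/\Gamma(2-\alpha)$, which lies below $\frac{4}{11}\tau^{-\alpha}/\Gamma(2-\alpha)\approx0.364\,\tau^{-\alpha}/\Gamma(2-\alpha)$. Moreover, the ratio decays like $n^{-\alpha}$. The property actually available is cell-wise, $A^{(n)}_{n-k}\ge\frac{4}{11\tau_k}\int_{t_{k-1}}^{t_k}\omega_{1-\alpha}(t_n-s)\,ds$. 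It coincides with the printed bound only at $k=n$, which is the case the paper explicitly invokes in the proof of Theorem~\ref{thm:MBP_L21_n}.

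That cell-wise bound needs no monotonicity. For $1\le k\le n-1$:
\begin{itemize}
\item drop the nonnegative term $b^{(n)}_{n-k+1}/r_k$;
\item use $s-t_{k-1/2}\le\tau_k/2$ in the definition of $b^{(n)}_{n-k}$ to get $b^{(n)}_{n-k}\le a^{(n)}_{n-k}/(1+r_{k+1})\le\frac{7}{11}a^{(n)}_{n-k}$, hence $A^{(n)}_{n-k}\ge\frac{4}{11}a^{(n)}_{n-k}$;
\item finish with $t_{n-\theta}<t_n$.
\end{itemize}
This is exactly where $\frac{4}{11}=1-\frac{1}{1+r_*}$ with $r_*=\frac{4}{7}$ comes from. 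For $k=n$, directly $A^{(n)}_0\ge a^{(n)}_0=(1-\theta)^{1-\alpha}\tau_n^{-\alpha}/\Gamma(2-\alpha)\ge\frac{1}{2}\tau_n^{-\alpha}/\Gamma(2-\alpha)$.

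Two smaller points on (ii):
\begin{itemize}
\item Since $\Gamma(-\alpha)<0$, $\omega_{-\alpha}$ is negative, so the subtracted integral in (ii) is positive. Strict positivity of the right-hand side then follows from $b^{(n)}_{n-k}>0$ alone. Your displayed upper bound on $b$ must involve $-\omega_{-\alpha}$; as written, with $c>0$, it would force $b<0$. In any case, an upper bound on $b$ could not deliver positivity.
\item Expanding the definitions gives $-b^{(n)}_{n-k-1}$ with no ratio factor, together with $-b^{(n)}_{n-k+1}/r_k$ and $(1+1/r_{k+1})\,b^{(n)}_{n-k}$. So the coefficient that actually arises is $1+1/r_{k+1}=1+\tau_k/\tau_{k+1}$, not the printed $1+1/r_k$, and your expansion should be corrected accordingly.
\end{itemize}
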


The following lemma is presented in \cite[Lemmas 2.1--2.2 \& Theorem 2.1]{JSC_Liao_2024}, which will be used to prove the discrete energy stability of the scheme to be proposed in next section.
\begin{lemma}[\cite{JSC_Liao_2024}]\label{lem:L21_DC_positive}	
	Under condition {\bf M1}, it holds
		$$	
		2(\nabla_\tau w^n)\,\mbD_\tau^\alpha w^n 	    
		\ge \mG[\nabla_\tau w^n ] - \mG[\nabla_\tau w^{n-1} ] + \frac{ 2 \alpha a_0^{(n)}}{2-\alpha}\left(\nabla_\tau w^n\right)^2,     
		\quad    n \geq 1,	
		$$
		where the nonnegative functionals $\mG$ is defined by	
		$$	
		\mG[v^n ] := \sum_{j=1}^{n-1}\bigl(B_{n-j-1}^{(n)}-B_{n-j}^{(n)}\bigr) \Bigl(\sum_{\ell=j+1}^n v^{\ell}\Bigr)^2	
		+  B_{n-1}^{(n)} \Bigl(\sum_{\ell=1}^n v^{\ell}\Bigr)^2, ~ n \geq 1;\quad \mG [v^0 ] = 0,	
		$$
		with $ B_{0}^{(1)} = 4(1-\alpha)a_{0}^{(1)}/(2-\alpha)$, $ B_{0}^{(n)} = 4(1-\alpha)a_{0}^{(n)}/(2-\alpha)  + 2 b_{1}^{(n)}/r_{n} $ for $n \ge 2$, and $ B_{n-k}^{(n)} = A_{n-k}^{(n)} $ for $ 1 \leq k \leq n-1 $. 
\end{lemma}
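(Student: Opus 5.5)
This inequality comes from \cite{JSC_Liao_2024}; to reprove it I would use a local--nonlocal splitting of the Alikhanov kernels. Since the first kernel is special, I would first rewrite $A_0^{(n)}$. Using $B_0^{(n)} = 4(1-\alpha)a_0^{(n)}/(2-\alpha) + 2b_1^{(n)}/r_n$ for $n\ge2$ and $A_0^{(n)} = a_0^{(n)} + b_1^{(n)}/r_n$, a direct computation gives
\[
2A_0^{(n)} - B_0^{(n)} = \frac{2\alpha a_0^{(n)}}{2-\alpha}.
\]
The same identity holds for $n=1$. This lets me write
\[
2(\nabla_\tau w^n)\mbD_\tau^\alpha w^n = \frac{2\alpha a_0^{(n)}}{2-\alpha}(\nabla_\tau w^n)^2 + Q_n,
\]
where
\[
Q_n := B_0^{(n)}(\nabla_\tau w^n)^2 + 2\nabla_\tau w^n\sum_{k=1}^{n-1}B_{n-k}^{(n)}\nabla_\tau w^k .
\]
So the $a_0^{(n)}$ term in the claim is exactly the local surplus, and it remains to show $Q_n\ge \mG[\nabla_\tau w^n]-\mG[\nabla_\tau w^{n-1}]$.

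For $Q_n$ I would set $v^\ell=\nabla_\tau w^\ell$ and partial sums $S_j:=\sum_{\ell=j+1}^n v^\ell$, with $S_0=\sum_{\ell=1}^n v^\ell$ and $S_n=0$. I would then apply the standard Abel-summation identity for a kernel $B_0\ge B_1\ge\cdots\ge B_{n-1}\ge0$:
\[
2v^n\sum_{k=1}^n B_{n-k}v^k = \sum_{j=1}^{n-1}(B_{n-j-1}-B_{n-j})\bigl(S_{j-1}^2 - S_j^2 + (v^n)^2\bigr)\text{-type terms}.
\]
More precisely, I would use $2v^n(S_{j-1}-\text{old sums})$ and the elementary identity $2ab=(a+b)^2-b^2-a^2$ written with $a=v^n$ and $b=\sum_{\ell=j}^{n-1}v^\ell$, the old partial sums at level $n-1$. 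This yields
\[
Q_n = \mG^{(n)}[v] - \widetilde{\mG}^{(n)}[v] + \Bigl(B_0^{(n)}-\sum(\text{differences})-B_{n-1}^{(n)}\Bigr)(v^n)^2 .
\]
Here $\widetilde{\mG}^{(n)}$ is the functional built from the level-$n$ kernels acting on the shorter sequence $v^1,\dots,v^{n-1}$. The telescoped coefficient of $(v^n)^2$ collapses to zero or to something nonnegative.

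The last step is to compare $\widetilde{\mG}^{(n)}$ with $\mG[v^{n-1}]$, which is built from the level-$(n-1)$ kernels. I need $\widetilde{\mG}^{(n)}\le \mG[\nabla_\tau w^{n-1}]$. This amounts to the kernel relations $B_{n-1-j}^{(n-1)}-B_{n-j}^{(n-1)} \ge B_{n-j-1}^{(n)}-B_{n-j}^{(n)}$ together with $B_{n-2}^{(n-1)}\ge B_{n-1}^{(n)}$. The first says the kernel differences decay as the evaluation point moves forward; the second is a monotonicity in $n$. Both would follow from convexity and monotone decay of $\omega_{1-\alpha}$ in the integral representations of $a^{(n)}$ and $b^{(n)}$. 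I would verify them via the derivative $\omega_{-\alpha}$ bound appearing in Lemma \ref{lem:L21_DC}(ii).

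\textbf{Main obstacle.} The nonnegativity of $\mG$ and the Abel step require $B_0^{(n)}\ge B_1^{(n)}=A_1^{(n)}$. This is where Lemma \ref{lem:L21_DC}(iii) and condition \textbf{M1} with $r_*=4/7$ enter. Checking $4(1-\alpha)a_0/(2-\alpha)+2b_1/r_n\ge A_1^{(n)}$ from $\frac{1-2\theta}{1-\theta}A_0^{(n)}>A_1^{(n)}$ with $\theta=\alpha/2$, together with the cross-level comparison of the first differences $B_0-B_1$, is the delicate part. I expect these kernel inequalities, rather than the algebra, to be the hardest.
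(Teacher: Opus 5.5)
Your algebraic reduction is sound and is the same local--nonlocal splitting used in the cited work. First, $2A_0^{(n)}-B_0^{(n)}=\frac{2\alpha}{2-\alpha}a_0^{(n)}$ for every $n\ge1$. Second, with $S_j^{(m)}:=\sum_{\ell=j+1}^{m}v^\ell$, the identity $2v^nS_j^{(n)}=(S_j^{(n)})^2-(S_j^{(n-1)})^2+(v^n)^2$ gives exactly $Q_n=\mG[v^n]-\widetilde{\mG}$, for arbitrary kernels. The leftover $(B_0^{(n)}-B_1^{(n)})(v^n)^2$ is just the $j=n-1$ term of $\mG[v^n]$. Matching the coefficients of $(S_j^{(n-1)})^2$, the comparison $\widetilde{\mG}\le\mG[v^{n-1}]$ requires
\begin{equation*}
B_{n-j-1}^{(n)}-B_{n-j}^{(n)}\le B_{n-j-2}^{(n-1)}-B_{n-j-1}^{(n-1)}\quad(1\le j\le n-2),\qquad B_{n-1}^{(n)}\le B_{n-2}^{(n-1)}.
\end{equation*}
Both sides must be the kernel difference attached to the same time index $k=j$. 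Your version has the level-$(n-1)$ subscripts shifted by one, so it compares different intervals. More importantly, these cross-level inequalities carry the entire analytic content of the lemma. The algebra above involves no step ratios, so any step-ratio restriction needed for the inequality ($4/7$ in \textbf{M1}, or $R_*(\alpha)$ in the Remark) can enter only through them. Your proposal does not prove them, and that is the gap.

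The route you indicate would not close it. Lemma~\ref{lem:L21_DC}(ii) is a lower bound for a difference at a single level, whereas you need the level-$n$ difference bounded above by the level-$(n-1)$ one. Nor does the result follow from convexity and decay of $\omega_{1-\alpha}$ alone. Set $\psi_m(s):=-\omega_{-\alpha}(t_{m-\theta}-s)>0$ and $D_k^{(m)}:=A_{m-k-1}^{(m)}-A_{m-k}^{(m)}$. Integrating by parts gives $D_k^{(m)}=\int_{t_{k-2}}^{t_{k+1}}g_k(s)\psi_m(s)\,ds$ for $2\le k\le m-2$, where the mesh-dependent weight $g_k$ is negative on $[t_{k-2},t_{k-1}]$ because of the term $-b_{m-k+1}^{(m)}/r_k$.

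One way to treat $2\le k\le n-3$ is as follows. Since $\psi_{n-1}-\psi_n$ is nonnegative and increasing in $s$, one gets $D_k^{(n-1)}-D_k^{(n)}\ge(\psi_{n-1}-\psi_n)(t_{k-1})\int g_k\,ds$. The negative part of $\int g_k\,ds$ equals $-\tau_k/(6r_k^2(1+r_k))$, so positivity needs a lower bound on $r_k$, which \textbf{M1} provides.

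Two further cases involve the artificial kernel $B_0$, which contains the local quantity $a_0$ and is not of this integral form:
\begin{itemize}
\item $j=n-2$, i.e.\ $A_1^{(n)}-A_2^{(n)}\le B_0^{(n-1)}-A_1^{(n-1)}$;
\item $n=2$, i.e.\ $A_1^{(2)}\le B_0^{(1)}$.
\end{itemize}
These need explicit estimates in terms of $\alpha$ and the step ratios, which is what the kernel lemma of \cite{JSC_Liao_2024} supplies.

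By contrast, the step you call the main obstacle is immediate. Since $\frac{1-2\theta}{1-\theta}=\frac{2(1-\alpha)}{2-\alpha}$ and $b_1^{(n)}\ge0$,
\[
B_0^{(n)}-\tfrac{2(1-\alpha)}{2-\alpha}A_0^{(n)}=\tfrac{2(1-\alpha)}{2-\alpha}a_0^{(n)}+\tfrac{2b_1^{(n)}}{(2-\alpha)r_n}>0 .
\]
Lemma~\ref{lem:L21_DC}(iii) then yields $B_0^{(n)}>A_1^{(n)}=B_1^{(n)}$, and together with (i)--(ii) this gives $\mG\ge0$. The inequality itself needs no monotonicity at all.
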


\begin{remark}
Let $ R_{*} = R_{*} ( \alpha ) $ be the unique positive root of the nonlinear equation	
\begin{equation*}\label{lemC:root}		
	2\sqrt{ \frac{ 2 ( 1 - \alpha/2 ) R_{*} }{ 1 + \alpha + ( 1 - \alpha/2 ) R_{*} } + \frac{ R_{*} }{ 1 + R_{*} } } + 3 - \frac{1}{ R_{*}^{2} ( 1 + R_{*} ) } = 0 \quad \text{for} \  \alpha \in (0,1).	
\end{equation*}	
Then, it holds that $ 0.3865 \approx R_{*}(0) < R_{*}( \alpha ) < R_{*}(1) \approx 0.4037 $ for all $ \alpha \in (0,1) $; see \cite[Lemma 2.1]{JSC_Liao_2024}. In fact, for a fixed $ \alpha \in (0,1) $, if the adjacent time-step ratios satisfy a weaker condition, namely $ r_{k} \geq R_{*}(\alpha) $ for $ 2 \leq k \leq n $, the conclusion of Lemma \ref{lem:L21_DC_positive} still holds; see \cite{JSC_Liao_2024} for more details. However, since the subsequent energy stability analysis relies on the MBP-preserving property (and hence invokes condition {\bf M1}), we adopt condition {\bf M1} throughout the paper for the sake of consistency.
\end{remark}

\subsection{ Central finite difference method }
For simplicity of presentation, the discussion is restricted to the two-dimensional square domain $ \Omega = (0,L)^{2} $ with periodic boundary conditions. The associated analysis can be readily extended to the three-dimensional case and/or homogeneous Neumann boundary condition.
Given a positive integer $ M $, let $ h = L/M $ be the spatial grid length and set $ \Omega_{h} := \{ \mathbf{x}_{h} = ( ih, jh ) \mid 0 \leq i,j \leq M \} $. Let $ \mathbb{V}_h $ be the set of all $M$-periodic real-valued grid functions on $ \Omega_{h} $, i.e.,
$$
\mathbb{V}_h:=\left\{v \mid v= \{v_{i,j}\}_{i,j=1}^{M}~ \text{and}~  v ~ \text{is periodic}\right\} .
$$ 
Define the following discrete inner product, and discrete $ L^2 $ and $ L^{\infty} $ norms
$$
\langle v, w\rangle=h^2 \sum_{i, j=1}^M v_{i j} w_{i j}, \quad\|v\|=\sqrt{\langle v, v\rangle}, \quad\|v\|_{\infty}=\max _{1 \leq i, j \leq M} |v_{i j}|
$$
for any $v, w \in \mathbb{V}_h$, and
$$
\langle \mathbf{v}, \mathbf{w}\rangle = \langle v^1, w^1\rangle + \langle v^2, w^2\rangle,  \quad \|\mathbf{v}\|=\sqrt{\langle\mathbf{v}, \mathbf{v}\rangle}
$$
for any $\mathbf{v} = (v^1, v^2)^\top, \mathbf{w}= (w^1, w^2)^\top \in \mathbb{V}_h \times \mathbb{V}_h$. Moreover, for the underlying MBP property \eqref{MBP}, we introduce the following maximum-norm function space, i.e.,
$$
\mathbb{V}_{\text{mbp}} := \left\{v \mid v \in \mathbb{V}_{h} ~ \text{with} ~ \| v \|_{\infty} \leq 1 \right\} .
$$ 

In this work, we consider a central finite difference discretization for the spatial differential operators. For any $ v \in \mathbb{V}_h $, the discrete Laplace operator $\Delta_h$ is defined by
$$
\Delta_h v_{i j}=\frac{1}{h^2}\left(v_{i+1, j}+v_{i-1, j}+v_{i, j+1}+v_{i, j-1}-4 v_{i j}\right), \quad 1 \leq i, j \leq M,
$$
and the discrete gradient operator $\nabla_h$ is defined by
$$
\nabla_h v_{i j}=\left(\frac{v_{i+1, j}-v_{i j}}{h}, \frac{v_{i, j+1}-v_{i j}}{h}\right)^\top, \quad 1 \leq i, j \leq M .
$$
Notice that $ \mathbb{V}_h $ is a finite-dimensional linear space, thus any grid function in $ \mathbb{V}_h $ and any linear operator $ P: \mathbb{V}_h \rightarrow \mathbb{V}_h $ can be treated as a vector in $ \mathbb{R}^{M^2} $ and a matrix
in $ \mathbb{R}^{M^2 \times M^2 } $, respectively.

\begin{lemma}[\cite{JCM_Tang_2016}]\label{lem:MBP_left} 
	For any $ a > 0 $, we have $ \| ( a I - \Delta_{h} )^{-1} \|_{\infty} \leq a ^{-1} $, where $ I $ represents the identity operator.
\end{lemma}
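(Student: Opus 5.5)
The plan is to prove the equivalent statement that $\|(aI-\Delta_h)^{-1}g\|_\infty \le a^{-1}\|g\|_\infty$ for every $g\in\mathbb{V}_h$, using a discrete maximum principle for the linear system $(aI-\Delta_h)v=g$. First, $aI-\Delta_h$ is invertible. In matrix form, every diagonal entry of $aI-\Delta_h$ equals $a+4/h^2$. Each row has exactly four off-diagonal entries equal to $-1/h^2$; periodicity identifies neighbours cyclically, so when $M$ is small some of these may coincide and add up, but the total off-diagonal mass per row is still $4/h^2$. Hence the matrix is strictly diagonally dominant by the margin $a>0$, and is therefore nonsingular by the Levy--Desplanques theorem. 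Equivalently, $-\Delta_h$ is symmetric positive semidefinite on $\mathbb{V}_h$, since $\langle -\Delta_h v,v\rangle=\|\nabla_h v\|^2$ by summation by parts. So $aI-\Delta_h$ is symmetric positive definite.

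Next, given $g$, set $v=(aI-\Delta_h)^{-1}g$ and choose an index $(i_0,j_0)$ at which $|v_{ij}|$ attains its maximum over the finite periodic grid. We may assume $v_{i_0j_0}\ge 0$; otherwise we replace $v,g$ by $-v,-g$. Write the equation at this node as
\[
\Big(a+\frac{4}{h^2}\Big)v_{i_0j_0}-\frac{1}{h^2}\big(v_{i_0+1,j_0}+v_{i_0-1,j_0}+v_{i_0,j_0+1}+v_{i_0,j_0-1}\big)=g_{i_0j_0}.
\]
Each neighbour value is at most $|v_{i_0j_0}|=v_{i_0j_0}$, so the bracketed sum is at most $4v_{i_0j_0}$. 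This gives $a\,v_{i_0j_0}\le g_{i_0j_0}\le\|g\|_\infty$, that is, $\|v\|_\infty\le a^{-1}\|g\|_\infty$. Taking the supremum over $g$ with $\|g\|_\infty\le1$ yields the operator-norm bound $\|(aI-\Delta_h)^{-1}\|_\infty\le a^{-1}$.

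An alternative route, which I would mention as a check, goes through $M$-matrix theory. Write $aI-\Delta_h=(a+4/h^2)(I-B)$, where $B\ge0$ entrywise and its row sums equal $\frac{4/h^2}{a+4/h^2}<1$. The Neumann series shows that $(aI-\Delta_h)^{-1}$ is entrywise nonnegative. Applying it to the constant vector $\mathbf{1}$, which satisfies $(aI-\Delta_h)\mathbf{1}=a\mathbf{1}$, shows that its row sums equal exactly $a^{-1}$, and that value is precisely the $\infty$-norm of a nonnegative matrix.

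There is no real obstacle here. The only points needing care are the handling of periodic wrap-around, which affects neither the diagonal dominance nor the row-sum argument, and the sign reduction at the extremal node.
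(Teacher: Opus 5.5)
Your proof is correct and follows essentially the same route as the source the paper cites (Tang and Yang); the paper itself gives no proof. That argument is the extremal-node diagonal-dominance one: at an index where $|v|$ is maximal, the diagonal entry $a+4/h^2$ outweighs the off-diagonal mass $4/h^2$, which forces $a\|v\|_\infty\le\|g\|_\infty$, exactly as in your computation. Your $M$-matrix check adds something: since $(aI-\Delta_h)\mathbf{1}=a\mathbf{1}$ and the inverse is entrywise nonnegative, the bound is actually attained with equality; and your separate invertibility step is unnecessary, because the maximum-principle estimate already gives injectivity.
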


\begin{lemma}[\cite{SIREV_Du_2021}]\label{lem:MBP_right} 
	If $\kappa \geq\left\|f^{\prime}\right\|_{C[-1, 1]}$ holds for some positive constant $\kappa$, then we have $|f( \xi ) + \kappa \xi | \leq \kappa $ for any $\xi \in[-1, 1]$.
\end{lemma}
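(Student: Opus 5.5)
The statement to prove is Lemma \ref{lem:MBP_right}: if $\kappa \ge \|f'\|_{C[-1,1]}$, then $|f(\xi)+\kappa\xi|\le\kappa$ for all $\xi\in[-1,1]$. The plan is to study $g(\xi):=f(\xi)+\kappa\xi$ on $[-1,1]$. First show $g$ is nondecreasing there, then bound it by its endpoint values $g(\pm1)$.

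\emph{Monotonicity.} For $\xi\in[-1,1]$ we have
\[
g'(\xi)=f'(\xi)+\kappa \ \ge\ \kappa-\|f'\|_{C[-1,1]}\ \ge\ 0 .
\]
Hence $g$ is nondecreasing on $[-1,1]$, and $g(-1)\le g(\xi)\le g(1)$ for every $\xi$ in the interval.

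\emph{Endpoint values.} We need $f(\pm1)=0$. This holds for the potentials the paper has in mind: for the double-well potential, $f(\phi)=\phi-\phi^3$ vanishes at $\pm1$. Using this, $g(1)=\kappa$ and $g(-1)=-\kappa$, so $-\kappa\le g(\xi)\le\kappa$, which is the claim.

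\emph{Main obstacle.} The only delicate point is that the statement does not list $f(\pm1)=0$ as a hypothesis. This is exactly the standing structural assumption of \cite{SIREV_Du_2021}, and it is the condition that makes $\pm1$ the maximum bound in \eqref{MBP}. If one wants a weaker assumption, it suffices that $f(1)\le0\le f(-1)$. Then $g(1)\le\kappa$ and $g(-1)\ge-\kappa$, and the same monotonicity argument goes through unchanged. I would state this assumption explicitly when invoking the lemma for the double-well case, where $f(\pm1)=0$ holds exactly.
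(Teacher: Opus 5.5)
Your proof is correct and is the standard argument from \cite{SIREV_Du_2021}: $f'+\kappa\ge 0$ makes $\xi\mapsto f(\xi)+\kappa\xi$ nondecreasing on $[-1,1]$, and the endpoint condition $f(1)\le 0\le f(-1)$ then bounds it between $-\kappa$ and $\kappa$. That inequality form is the structural assumption actually used there; the equality case $f(\pm1)=0$ holds for the double-well potential. You are also right that this hypothesis is genuinely needed but only implicit in the paper's statement: without it the claim fails, e.g.\ for a nonzero constant $f$.
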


\subsection{Discrete fractional Gr\"onwall inequality}
Define a sequence of discrete complementary convolution kernels $ \{ P^{(n)}_{n-j} \}_{j=1}^{n} $ by
$$
P^{(n)}_{0} := \frac{1}{ A^{(n)}_{0} }, \quad P^{(n)}_{n-j} := \frac{1}{ P^{(j)}_{0} } \sum^{n}_{k=j+1} ( A^{(k)}_{k-j-1} - A^{(k)}_{k-j} ) P^{(n)}_{n-k}, \quad 1 \leq j \leq n-1,
$$
such that the discrete kernels $ P^{(n)}_{n-j} \geq 0 $ fulfill
\begin{equation}\label{kernel:comple}
	\sum^{n}_{j=k} P^{(n)}_{n-j} A^{(j)}_{j-k} = 1 \quad \text{for} \   1 \leq k \leq n \leq N,
\end{equation}
and
\begin{equation}\label{kernel:bound}
	P^{(n)}_{0} \leq \frac{11}{4} \Gamma(2-\alpha) \tau_{n}^{\alpha}, \qquad  \sum^{n}_{j=1} P^{(n)}_{n-j} \omega_{1+m\alpha-\alpha}(t_{j}) \leq \frac{11}{4}\omega_{1+m\alpha}(t_{n}), \quad m = 0,1,
\end{equation}
for $ 1 \leq n \leq N $, see \cite{JCP_Liao_2020,SINUM_Liao_2019}. The following lemma also holds.
\begin{lemma}[\cite{SINUM_Liao_2019}]\label{lem:ML}
	Assume condition {\bf M1} holds. For any real parameter $ \mu > 0 $, we have
	$$
	\sum^{n-1}_{j=1} P^{(n)}_{n-j} E_{\alpha} ( \mu t_{j}^{\alpha} ) \leq \frac{77}{16} \frac{ E_{\alpha} ( \mu t_{n}^{\alpha} ) - 1 }{ \mu }, \quad 1 \leq n \leq N,
	$$
	  where $ E_{\alpha} (z) := \sum^{\infty}_{k=0} \frac{ z^{k} }{ \Gamma( 1 + k \alpha ) } $ is the single-parameter Mittag--Leffler function.
\end{lemma}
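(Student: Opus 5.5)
Since Lemma \ref{lem:ML} is quoted from \cite{SINUM_Liao_2019}, I would reproduce its argument. The idea is to compare $E_\alpha(\mu t^\alpha)$ with the solution of a fractional ODE, and then use the complementary kernels as a discrete inverse of the Alikhanov operator. Set $u(t):=E_\alpha(\mu t^\alpha)$. Then $u(0)=1$, $u$ is increasing and convex on $(0,\infty)$, and
\[
\p_t^\alpha u = \mu u.
\]
Equivalently, $u(t)-1=\mu\int_0^t\omega_\alpha(t-s)u(s)\,ds$. The goal is to show that $\sum_{j=1}^{n}P^{(n)}_{n-j}\,\p_t^\alpha u(t_j)$ is bounded by a constant multiple of $u(t_n)-1$. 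Dividing by $\mu$ and using $u(t_j)\ge0$ then bounds the sum up to $n-1$ by the sum up to $n$.

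\textbf{Step 1: a key inequality.} I would show that for a monotone increasing, convex function $v$ with $v(0)=1$,
\[
\p_t^\alpha v(t_j)\le C_A\,\mbD_\tau^\alpha v^j ,
\]
or a suitable variant of this at the offset point. The approach is to split the Caputo integral into pieces over $[t_{k-1},t_k]$. On each piece, $\p_s v\ge0$ is increasing, so $\int_{t_{k-1}}^{t_k}\omega_{1-\alpha}(t_j-s)\,\p_s v\,ds$ can be compared with $\nabla_\tau v^k$ times the kernel averages $a^{(j)}_{j-k}$. Lemma \ref{lem:L21_DC}(i),(ii) then bounds the Alikhanov kernels $A^{(j)}_{j-k}$ from below by multiples of these averages. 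Monotonicity of the kernels permits an Abel summation in which only the lower bound $A\ge \frac{4}{11\tau}\int\omega_{1-\alpha}$ is needed. The expected constant is around $11/4$ times a small factor, and $77/16=\frac{11}{4}\cdot\frac{7}{4}$ suggests that the ratio $1/r_*=7/4$ from {\bf M1} enters exactly here.

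\textbf{Step 2: telescoping.} Multiply by $P^{(n)}_{n-j}$, sum over $j$, swap the order of summation, and apply the identity \eqref{kernel:comple}. This gives
\[
\sum_{j=1}^n P^{(n)}_{n-j}\mbD_\tau^\alpha v^j=\sum_{k=1}^n\nabla_\tau v^k=v^n-v^0 = u(t_n)-1 .
\]
Combining this with Step 1 and with $\p_t^\alpha u(t_j)=\mu u(t_j)$ yields the claim with constant $\frac{77}{16}$.

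\textbf{Main obstacle.} The difficulty lies in Step 1, because of the mismatch between the evaluation point $t_{j-\theta}$ of the Alikhanov formula and the node $t_j$ at which $\mu u(t_j)$ appears. The $b$-correction terms also carry signs that must be controlled. To handle this, I would first pass to the simpler L1-type bound $\mbD_\tau^\alpha v^j\ge\frac{4}{11}\cdot\frac{4}{7}\cdot(\text{L1 approximation at }t_j)$. This is where the ratio $r_*=4/7$ controls the last step $\tau_j$ against $\tau_{j-1}$. I would then use convexity to show that the L1 approximation dominates $\p_t^\alpha u(t_j)$ up to that constant. If this direct comparison fails, the fallback is the route through the second estimate in \eqref{kernel:bound}. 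There, one expands $u(t_j)=\sum_k\mu^k\omega_{1+k\alpha}(t_j)$ and applies a generalized version of \eqref{kernel:bound} to each power term, which requires proving $\sum_jP^{(n)}_{n-j}\omega_{1+k\alpha}(t_j)\le \frac{77}{16}\omega_{1+(k+1)\alpha}(t_n)$ uniformly in $k$.
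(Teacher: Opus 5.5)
Your plan has a genuine gap, and it lies exactly where you try to prove more than the lemma states.

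\textbf{The stronger bound is false.} Steps 1--2 aim at $\mu\sum_{j=1}^{n}P^{(n)}_{n-j}u(t_j)\le C\,(u(t_n)-1)$, with the sum running up to $j=n$. This fails uniformly in $\mu$. The $j=n$ term alone equals $\mu P^{(n)}_{0}u(t_n)$. The upper bound on $A^{(n)}_0$ in Lemma \ref{lem:L21_DC}(i) gives $P^{(n)}_0=1/A^{(n)}_0\ge\frac{11}{24}\Gamma(2-\alpha)\tau_n^{\alpha}$, so you would need $\mu\tau_n^{\alpha}\lesssim 1$.

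Equivalently, Step 1 cannot hold for $u$ with a $\mu$-independent $C_A$. Kernel monotonicity gives $\mbD_\tau^\alpha u^j\le A^{(j)}_0(u^j-1)\lesssim\tau_j^{-\alpha}u^j$, while $\p_t^\alpha u(t_j)=\mu u(t_j)$.

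Your fallback fails for the same reason if $j=n$ is included. That single term exceeds $\frac{77}{16}\omega_{1+(k+1)\alpha}(t_n)$ by a factor $\gtrsim(\tau_n/t_n)^{\alpha}\,\Gamma(1+(k+1)\alpha)/\Gamma(1+k\alpha)$, which is unbounded in $k$. So the cut-off $j\le n-1$ in the statement is essential, not cosmetic.

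\textbf{The convexity heuristic is backwards.} First, $u$ is not convex: near $t=0$ one has $u'(t)\sim\mu t^{\alpha-1}/\Gamma(\alpha)$, which decreases. Second, for convex $v$ Chebyshev's integral inequality runs the wrong way. Since $\omega_{1-\alpha}(t_j-s)$ and $v'(s)$ are both nondecreasing on $[t_{k-1},t_k]$, the Caputo piece $\int_{t_{k-1}}^{t_k}\omega_{1-\alpha}(t_j-s)v'(s)\,ds$ is \emph{at least} the L1 piece $\frac{\nabla_\tau v^k}{\tau_k}\int_{t_{k-1}}^{t_k}\omega_{1-\alpha}(t_j-s)\,ds$. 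The comparison you want holds only for concave $v$ (non-increasing $v'$), which is the mechanism behind \eqref{kernel:bound} with $m=1$.

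\textbf{The missing idea is a forward shift on the convex pieces.} The paper itself only cites \cite{SINUM_Liao_2019}, but the sum to $n-1$ and the factor $\frac{77}{16}=\frac{11}{4}\cdot\frac{7}{4}$ reflect exactly this mechanism. Expand $u=\sum_{m\ge0}\mu^m\omega_{1+m\alpha}$ and set $w:=\omega_{1+(m+1)\alpha}$, so that $\p_t^\alpha w=\omega_{1+m\alpha}$.

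If $(m+1)\alpha\le1$, then $w$ is concave, and your Chebyshev argument gives $\sum_{j=1}^{n}P^{(n)}_{n-j}\omega_{1+m\alpha}(t_j)\le\frac{11}{4}\omega_{1+(m+1)\alpha}(t_n)$.

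If $(m+1)\alpha>1$, then $w'$ is nondecreasing. Use $A^{(j)}_{j-k}\ge\frac{4}{11\tau_k}\int_{t_{k-1}}^{t_k}\omega_{1-\alpha}(t_j-s)\,ds$ (the intended form of Lemma \ref{lem:L21_DC}(i)) and $w'(t_k)\le\nabla_\tau w^{k+1}/\tau_{k+1}$ to get
\[
\begin{aligned}
\int_{t_{k-1}}^{t_k}\omega_{1-\alpha}(t_j-s)\,w'(s)\,ds
&\le w'(t_k)\int_{t_{k-1}}^{t_k}\omega_{1-\alpha}(t_j-s)\,ds
\le \frac{11}{4}\,\tau_k\,w'(t_k)\,A^{(j)}_{j-k}\\
&\le \frac{11}{4}\cdot\frac{\tau_k}{\tau_{k+1}}\,A^{(j)}_{j-k}\,\nabla_\tau w^{k+1}
\le \frac{77}{16}\,A^{(j)}_{j-k}\,\nabla_\tau w^{k+1},
\end{aligned}
\]
since $\tau_k/\tau_{k+1}=1/r_{k+1}\le 7/4$ under {\bf M1}.

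Summing over $k\le j$ bounds $\omega_{1+m\alpha}(t_j)$ by $\frac{77}{16}\sum_{k=1}^{j}A^{(j)}_{j-k}\nabla_\tau w^{k+1}$. This requires $k+1\le n$ for all $k\le j$, i.e.\ $j\le n-1$. Multiply by $P^{(n)}_{n-j}$, sum over $1\le j\le n-1$, and use $\sum_{j=k}^{n-1}P^{(n)}_{n-j}A^{(j)}_{j-k}\le1$ from \eqref{kernel:comple}. This yields $\sum_{j=1}^{n-1}P^{(n)}_{n-j}\omega_{1+m\alpha}(t_j)\le\frac{77}{16}\omega_{1+(m+1)\alpha}(t_n)$. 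Summing $\mu^m$ times these bounds gives the lemma.

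Thus $1/r_*=7/4$ enters through $\tau_k/\tau_{k+1}$, not through $\tau_j$ versus $\tau_{j-1}$. The offset point $t_{j-\theta}$ never needs to be confronted, because everything is routed through the kernel lower bound at $t_j$.
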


Next, we establish an $\alpha$-robust discrete fractional Gr\"onwall inequality that plays an important role in the subsequent maximum-norm error analysis.
\begin{lemma}\label{thm:Gron} Assume condition {\bf M1} holds. Let $ \{ \lambda_{k} \}_{k=0}^{N} $ be nonnegative constants with $ 0 < \sum^{N}_{k=0} \lambda_{k} \leq \Lambda $, where $ \Lambda > 0 $ is some constant (independent of the time stepsize). Suppose that the nonnegative sequences $ \{ \xi^k \}_{k=1}^{N} $ and $ \{ \eta^k \}_{k=1}^{N} $ are bounded and the nonnegative sequence $ \{ w^{k} \}_{k=0}^{N} $ satisfies
	\begin{equation}\label{Condi:Gron}
		\sum^{n}_{k=1} A^{(n)}_{n-k} \nabla_{\tau} w^{k} \leq \sum^{n}_{k=0} \lambda_{n-k} w^{k} + \xi^{n} + \eta^{n} \quad \text{for} \  1 \le n \le N.
	\end{equation}
	Then, if $ \tau_{n} \leq \sqrt[\alpha]{4/(33\Gamma(2-\alpha)\Lambda)} $, it holds that
	\begin{equation}\label{Conclusion1:Gron}
		w^{n}  \leq C_{n,\alpha} \big( w^{0} + \max_{1\leq k\leq n} \sum^{k}_{j=1} P^{(k)}_{k-j} ( \xi^{j} + \eta^{j} ) \big),
	\end{equation}
	where $C_{n,\alpha}:=2 E_{\alpha} ( 77/8~ \Lambda t_{n}^{\alpha} )$ is $\alpha$-dependent, but is robust when $\alpha\to 1$, and further, we have
	\begin{equation}\label{Conclusion2:Gron}
		w^{n} \leq  C_{n,\alpha} \big( w^{0} + \max_{1\leq k\leq n} \sum^{k}_{j=1} P^{(k)}_{k-j} \xi^{j} + \frac{11}{4} \omega_{1+\alpha}( t_{n} ) \max_{1\leq k \leq n } \eta^{k} \big).
	\end{equation}
\end{lemma}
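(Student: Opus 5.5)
Following Liao et al.\ (SINUM 2019 and JCP 2020), the plan is to convert the convolution inequality \eqref{Condi:Gron} into a pointwise bound with the complementary kernels $P^{(n)}_{n-j}$, and then close the estimate by comparison with a Mittag--Leffler function via Lemma \ref{lem:ML}.

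\textbf{Step 1 (reduction).} Replace \eqref{Condi:Gron} by the same inequality for the running maximum. Set $v^{k}:=\max_{0\le \ell\le k} w^{\ell}$; this is nondecreasing. We do not need the convolution inequality for $v$ itself. It suffices to fix $n$, choose an index $n_0\le n$ with $w^{n_0}=v^{n}$, and work at level $n_0$, using only that $w^{k}\le v^{k}\le v^{n_0}$ for $k\le n_0$.

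\textbf{Step 2 (inversion).} Multiply \eqref{Condi:Gron} at level $j$ by $P^{(m)}_{m-j}\ge0$ and sum over $j=1,\dots,m$. Exchange the order of summation and use \eqref{kernel:comple}; the left side telescopes to $w^{m}-w^{0}$. This gives
\begin{equation*}
w^{m}\le w^{0}+\sum_{j=1}^{m}P^{(m)}_{m-j}\sum_{k=0}^{j}\lambda_{j-k}w^{k}+\sum_{j=1}^{m}P^{(m)}_{m-j}(\xi^{j}+\eta^{j}).
\end{equation*}
In the middle term, isolate the diagonal contribution $P^{(m)}_{0}\lambda_0 w^{m}$. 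By \eqref{kernel:bound} and $\lambda_0\le\Lambda$, the step-size restriction $\tau_m\le\sqrt[\alpha]{4/(33\Gamma(2-\alpha)\Lambda)}$ gives $P^{(m)}_{0}\lambda_0\le \frac{11}{4}\Gamma(2-\alpha)\tau_m^{\alpha}\Lambda\le 1/3$. This coefficient can therefore be absorbed on the left, at the cost of a factor $3/2\le 2$.

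For the remaining terms, bound $\sum_{k<j}\lambda_{j-k}w^{k}$ by $\Lambda v^{j-1}$. Since $\sum_k\lambda_k\le\Lambda$, this yields a recursion of the form
\begin{equation*}
v^{m}\le \tfrac32\Big(w^{0}+\Lambda\sum_{j=1}^{m-1}\tilde P^{(m)}_{m-j}v^{j}+\max_{k\le m}\sum_{j=1}^{k}P^{(k)}_{k-j}(\xi^j+\eta^j)\Big),
\end{equation*}
with weights $\tilde P$ essentially equal to $P$. The careful bookkeeping of the index shift from $v^{j-1}$ to $v^{j}$ also uses the monotonicity of $v$.

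\textbf{Step 3 (comparison).} Close by induction with the ansatz
\begin{equation*}
v^{m}\le 2E_{\alpha}\big(\tfrac{77}{8}\Lambda t_m^{\alpha}\big)\,G_m,
\end{equation*}
where $G_m$ is the bracket in \eqref{Conclusion1:Gron}, which is nondecreasing in $m$. Substituting the hypothesis into the recursion and applying Lemma \ref{lem:ML} with $\mu=\frac{77}{8}\Lambda$ gives
\begin{equation*}
\Lambda\sum_{j<m}P^{(m)}_{m-j}E_\alpha(\mu t_j^\alpha)\le\tfrac{77}{16}\Lambda\,\frac{E_\alpha(\mu t_m^\alpha)-1}{\mu}=\tfrac12\big(E_\alpha(\mu t_m^\alpha)-1\big).
\end{equation*}
Combined with the prefactor, the right-hand side becomes at most $2E_\alpha(\mu t_m^\alpha)G_m$. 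The constants $77/8$ and $2$ are chosen precisely so that this closes, and checking the constant arithmetic (the $3/2$ absorption factor against the factor $1/2$ coming from Lemma \ref{lem:ML}) is the main delicate point. If it does not close exactly, I would adjust the absorption threshold using the slack in $1/3$. The $\alpha$-robustness as $\alpha\to1$ is then evident, since all constants stay bounded.

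\textbf{Step 4 (second estimate).} For \eqref{Conclusion2:Gron}, split the source term. Since $\eta^j\le\max_{k\le n}\eta^k$ for every $j\le k\le n$, the second bound in \eqref{kernel:bound} with $m=1$ gives
\begin{equation*}
\sum_{j=1}^{k}P^{(k)}_{k-j}\eta^{j}\le\max_{\ell\le n}\eta^{\ell}\sum_{j=1}^{k}P^{(k)}_{k-j}\omega_{1}(t_j)\le\tfrac{11}{4}\,\omega_{1+\alpha}(t_k)\max_{\ell\le n}\eta^{\ell},
\end{equation*}
using $\omega_1\equiv1$. Monotonicity of $\omega_{1+\alpha}$ in $t$ then replaces $t_k$ by $t_n$, and inserting this into \eqref{Conclusion1:Gron} yields \eqref{Conclusion2:Gron}.
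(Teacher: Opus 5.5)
Your proposal is correct and follows essentially the same route as the paper. It inverts via the complementary kernels $P^{(n)}_{n-j}$, absorbs the level-$n_0$ row at a maximizing index using $P^{(n_0)}_0\Lambda\le\frac{1}{3}$, closes by induction with Lemma~\ref{lem:ML} at $\mu=\frac{77}{8}\Lambda$, and gets the second estimate from \eqref{kernel:bound} with $m=1$; the paper phrases your running-maximum choice of $n_0$ as a case split on whether $w^n>\max_{j<n}w^j$. The constants close with no adjustment needed, provided you absorb the entire row $P^{(n_0)}_0\sum_{k=0}^{n_0}\lambda_{n_0-k}w^k\le P^{(n_0)}_0\Lambda w^{n_0}$ (not only its $\lambda_0$ part) and bound each row $j<n_0$ by $\Lambda v^j$; then $\tilde P=P$ exactly and the bound becomes $\frac{3}{2}E_\alpha(\mu t_{n_0}^\alpha)G_{n_0}\le C_{n,\alpha}G_n$.
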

\begin{proof} The conclusion \eqref{Conclusion2:Gron} is a consequent result of \eqref{Conclusion1:Gron} by applying \eqref{kernel:bound} with $ m =1 $. Therefore, it remains to verify the conclusion \eqref{Conclusion1:Gron}. We replace the index $ n $ with $ j $ in \eqref{Condi:Gron}, then multiply by $ P^{(n)}_{n-j} $ and sum over $ j $ to obtain
	\begin{equation}\label{Gron:1}
		\sum_{j=1}^{n} P^{(n)}_{n-j} \sum^{j}_{k=1} A^{(j)}_{j-k} \nabla_{\tau} w^{k} \leq \sum_{j=1}^{n} P^{(n)}_{n-j} \sum^{j}_{k=0} \lambda_{j-k} w^{k} + \sum_{j=1}^{n} P^{(n)}_{n-j} ( \xi^{j} + \eta^{j} ) .
	\end{equation}
	Exchanging  the order of summation and using the identity relation \eqref{kernel:comple}, we get
	\begin{equation*}
		\sum_{j=1}^{n} P^{(n)}_{n-j} \sum^{j}_{k=1} A^{(j)}_{j-k} \nabla_{\tau} w^{k} = \sum_{k=1}^{n} \nabla_{\tau} w^{k} \sum^{n}_{j=k} P^{(n)}_{n-j} A^{(j)}_{j-k} = \sum_{k=1}^{n} \nabla_{\tau} w^{k} = w^{n} - w^{0} .
	\end{equation*}
	Thus, it follows from \eqref{Gron:1} that
	\begin{equation}\label{Gron:2}
		w^{n}  \leq w^{0} + \sum_{j=1}^{n} P^{(n)}_{n-j} \sum^{j}_{k=0} \lambda_{j-k} w^{k} + \sum_{j=1}^{n} P^{(n)}_{n-j} ( \xi^{j} + \eta^{j} ) .
	\end{equation}
	Denote
	$
    G_{n} := w^{0} + \max_{1\leq k \leq n} \sum^{k}_{j=1} P^{(k)}_{k-j} ( \xi^{j} + \eta^{j} ),
	$
	then the claimed estimate \eqref{Conclusion1:Gron} can be rewritten as 
	\begin{equation}\label{Gron:3}
		w^{n} \leq C_{n,\alpha} G_{n} \quad \text{for} \  1 \leq n \leq N. 
	\end{equation}
 It is clear that $ G_{n} $ is monotonous for $ n \geq 1 $, i.e., $ G_{n} \geq G_{n-1} $. Moreover, since the Mittag--Leffler function satisfies $ E_{\alpha}(0) = 1 $ and $ E_{\alpha}'(z) > 0 $ for all real $ z > 0 $, we have $ C_{n,\alpha} \geq C_{n-1,\alpha} \geq 2 $ for $ n \geq 1 $. 
    
    Now, we use the mathematical induction to complete the proof of \eqref{Conclusion1:Gron}. First, taking $ n = 1 $ in \eqref{Gron:2}, one has
	\begin{equation*}
		w^{1}  \leq w^{0} + P^{(1)}_{0} ( \lambda_{1} w^{0} + \lambda_{0} w^{1} ) + P^{(1)}_{0} ( \xi^{1} + \eta^{1} ) .
	\end{equation*}
	From \eqref{kernel:bound} and the given restriction on the time stepsize, it holds that
	$$
	P^{(1)}_{0} \lambda_{i} \leq \frac{11}{4} \Gamma(2-\alpha) \tau_{1}^{\alpha} \Lambda \leq  \frac{1}{3}, \quad i=0,1,
	$$
	which, together with the fact $ P^{(1)}_{0} \geq 0 $, implies that
	$$
	w^{1} \leq \frac{3}{2} \left( \frac{4}{3} w^{0} + P^{(1)}_{0} ( \xi^{1} + \eta^{1} ) \right) \leq 2 G_{1} \leq C_{1,\alpha} G_{1}.
	$$
	
	Next, assume \eqref{Gron:3} holds for all $1 \leq k \leq n-1$, we shall prove it also holds for $k=n$. Now, choose some $ k(n) $ such that $ w^{ k(n) } = \max_{0\leq j \leq n-1} w^{j} $. If $ w^{n} \leq w^{ k(n) } $, then due to the monotonicity of $ C_{k,\alpha} $ and $ G_{k} $, we have
	$$
	w^{n} \leq w^{ k(n) } \leq C_{ k(n),\alpha } G_{ k(n) } \leq C_{ n,\alpha } G_{ n },
	$$
	as required. Otherwise, if $ w^{n} > w^{ k(n) } $, we deduce from \eqref{Gron:2}, assumption \eqref{Gron:3} and  the monotonicity of $ C_{k,\alpha} $ and $ G_{k} $ that
	\begin{equation*}
		\begin{aligned}
			w^{n} & \leq G_{n} + \sum_{j=1}^{n-1} P^{(n)}_{n-j} \sum^{j}_{k=0} \lambda_{j-k} w^{k} + P^{(n)}_{0} \sum^{n}_{k=0} \lambda_{n-k} w^{k} \\
			&\leq G_{n} + \sum_{j=1}^{n-1} P^{(n)}_{n-j} \sum^{j}_{k=0} \lambda_{j-k} C_{k,\alpha} G_{k} + P^{(n)}_{0} \sum^{n}_{k=0} \lambda_{n-k} w^{k} \\
			&\leq G_{n} + \Lambda \sum_{j=1}^{n-1} P^{(n)}_{n-j} C_{j,\alpha} G_{j} + P^{(n)}_{0} \Lambda w^{n}.
		\end{aligned}
	\end{equation*}
	Then, the time stepsize restriction $ \tau_{n} \leq \sqrt[\alpha]{4/(33\Gamma(2-\alpha)\Lambda)} $ implies that
	\[P^{(n)}_{0} \Lambda \le \frac{11}{4} \Gamma(2-\alpha)\Lambda \tau_{n}^{\alpha}  \le \frac{1}{3},\]
	and thus
	\begin{equation*}
		\begin{aligned}
			w^{n} \leq \frac{3}{2} G_{n} + \frac{3}{2} \Lambda \sum_{j=1}^{n-1} P^{(n)}_{n-j} C_{j,\alpha} G_{j} \leq \frac{3}{2} G_{n} + 3 \Lambda G_{n} \sum_{j=1}^{n-1} P^{(n)}_{n-j} E_{\alpha} ( 77/8~ \Lambda t_{j}^{\alpha} ).
		\end{aligned}
	\end{equation*}
	Finally, by Lemma \ref{lem:ML} with $ \mu = 77/8 \Lambda $, we have
	\begin{equation*}
		\begin{aligned}
			w^{n} \leq \frac{3}{2} G_{n} + 3 \times \frac{77}{16} \Lambda G_{n} \frac{ E_{\alpha} ( 77/8 \Lambda t_{n}^{\alpha} ) - 1 }{ 77/8 \Lambda } = \frac{3}{2} E_{\alpha} ( 77/8 \Lambda t_{n}^{\alpha} ) G_{n} \leq C_{n,\alpha} G_{n},
		\end{aligned}
	\end{equation*}
	which completes the induction and thus the proof.
\end{proof}

\section{MBP-preserving stabilized IMEX-Alikhanov scheme }\label{Sec:sch_mbp}

In this section, we construct a second-order, MBP-preserving linear stabilized IMEX-Alikhanov scheme on nonuniform temporal grids for model \eqref{Model:tAC} as follows.

Let $\widehat{\phi}^{n,\theta}=\phi^{0} $ for $ n = 1 $, and for $ n \geq 2 $, $ \widehat{\phi}^{n,\theta} $ is generated by the following second-order extrapolation formula equipped with a simple cut-off, namely,
\begin{equation}\label{sch:L21_n:e1}
	\widehat{\phi}^{n,\theta} = \min\left\{ \max\left\{ ( 1 + (1-\theta) r_{n} ) \phi^{n-1} - (1-\theta) r_{n} \phi^{n-2}  , - 1 \right\}, 1 \right\}.
\end{equation}
Then the stabilized IMEX scheme reads as finding $ \phi^{n} \in \mathbb{V}_h $ such that
\begin{equation}\label{sch:L21_n}
	\mathbb{D}^{\alpha}_{\tau} \phi^{n}  = \varepsilon^2 \Delta_h \bar{\phi}^{n,\theta} + f(\widehat{\phi}^{n,\theta}) - \kappa ( \bar{\phi}^{n,\theta} - \widehat{\phi}^{n,\theta} ), \quad 1 \le n \le N,
\end{equation}
where $ \kappa \geq 0 $ is a stabilization constant to be determined. 
Equivalently, the scheme \eqref{sch:L21_n} can be rewritten as follows:
\begin{equation}\label{sch:L21_1_1}
	\begin{aligned}
		& \mQ^{1}\phi^{1}  
		=   \mM^{1} \phi^{0}  + f(\phi^{0}) + \kappa \phi^{0}, \quad n=1;
	\end{aligned}
\end{equation}
\begin{equation}\label{sch:L21_n_1}
	\begin{aligned}
		& \mQ^{n} \phi^{n} 
		 =  \mM^{n} \phi^{n-1} + \Xi^{n-2} (\phi) + f(\widehat{\phi}^{n,\theta})  + \kappa \widehat{\phi}^{n,\theta}, \quad n \ge 2,
		\end{aligned}
	\end{equation}
where the operators 
$$ 
\mQ^{n} := ( A^{(n)}_{0} + \kappa ( 1 - \theta ) ) I - ( 1 - \theta ) \varepsilon^2  \Delta_h,~ n \ge 1; 
$$ 
$$ 
\mM^{1} := ( A^{(1)}_{0} - \kappa \theta ) I + \theta \varepsilon^2  \Delta_h,\quad 
\mM^{n} := ( A_{0}^{(n)} - A_{1}^{(n)} - \kappa \theta ) I + \theta \varepsilon^2 \Delta_h,~ n \ge 2;
$$ 
$$
		\Xi^{n-2}[\phi] := \sum_{k=1}^{n-2} \big( A_{n-k-1}^{(n)} - A_{n-k}^{(n)} \big) \phi^k + A_{n-1}^{(n)} \phi^0, ~ n \ge 2.
$$

Since $ A^{(n)}_{0} > 0 $ for all $ n \geq 1 $, the operator $ \mQ^{n} $ is self-adjoint and positive definite. Consequently, the scheme \eqref{sch:L21_1_1}--\eqref{sch:L21_n_1} or originally \eqref{sch:L21_n} is uniquely solvable. Next, we are ready to establish the discrete MBP for the proposed scheme.

\begin{theorem}\label{thm:MBP_L21_n} Assume condition {\bf M1} and the stabilization parameter $
	\kappa \geq \left\|f^{\prime}\right\|_{C[-1, 1]}
	$ hold. If the time stepsize satisfies
	\begin{equation}\label{MBP:L21_tau}
		\tau \leq \sqrt[\alpha]{ 4/( 11 ( 1 - \theta ) ( 4 \varepsilon^{2}/h^2 + \kappa ) \Gamma( 2 - \alpha ) ) },
	\end{equation}
	the stabilized IMEX-Alikhanov scheme \eqref{sch:L21_n} preserves the MBP for $\left\{\phi^n\right\}$, i.e., the discrete version of MBP is valid:
	\begin{equation*}\label{MBP:L21}
		\left\|\phi_{\text {init }}\right\|_{\infty} \leq 1 \quad  \Longrightarrow \quad \left\|\phi^n\right\|_{\infty} \leq 1, \quad \forall n \geq 1.
	\end{equation*}
\end{theorem}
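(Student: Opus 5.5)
Proof will go by induction on $n$: assuming $\|\phi^k\|_\infty\le1$ for $0\le k\le n-1$, we show $\|\phi^n\|_\infty\le1$ using the reformulation \eqref{sch:L21_1_1}--\eqref{sch:L21_n_1}. Write $\mQ^n = (1-\theta)\varepsilon^2\big(aI-\Delta_h\big)$ with $a=(A_0^{(n)}+\kappa(1-\theta))/((1-\theta)\varepsilon^2)$. Lemma \ref{lem:MBP_left} then gives
\[
\|(\mQ^n)^{-1}\|_\infty\le \big(A_0^{(n)}+\kappa(1-\theta)\big)^{-1},
\]
so it suffices to bound the $\infty$-norm of the right-hand side by $A_0^{(n)}+\kappa(1-\theta)$.

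\textbf{Bounding $\mM^n$.} The operator $\mM^n$ acts as $c\,v_{ij}+\theta\varepsilon^2h^{-2}(\text{neighbours}-4v_{ij})$. Its diagonal coefficient is $c-4\theta\varepsilon^2/h^2$ and its off-diagonal coefficients are nonnegative. Hence, whenever
\[
c-4\theta\varepsilon^2/h^2\ge0,
\]
we have $\|\mM^n v\|_\infty\le c\|v\|_\infty$. Here $c=A_0^{(1)}-\kappa\theta$ for $n=1$ and $c=A_0^{(n)}-A_1^{(n)}-\kappa\theta$ for $n\ge2$.

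This nonnegativity is the main obstacle, and it is where \eqref{MBP:L21_tau} and Lemma \ref{lem:L21_DC} enter. Lemma \ref{lem:L21_DC}(iii) gives
\[
A_0^{(n)}-A_1^{(n)}>\frac{\theta}{1-\theta}A_0^{(n)} .
\]
Lemma \ref{lem:L21_DC}(i) gives
\[
A_0^{(n)}\ge \frac{4}{11\tau_n}\int_{t_{n-1}}^{t_n}\omega_{1-\alpha}(t_n-s)\,ds=\frac{4}{11\,\Gamma(2-\alpha)\tau_n^\alpha}.
\]
The step restriction then yields
\[
A_0^{(n)}\ge(1-\theta)(4\varepsilon^2/h^2+\kappa).
\]
Therefore
\[
A_0^{(n)}-A_1^{(n)}-\kappa\theta\ge \frac{\theta}{1-\theta}A_0^{(n)}-\kappa\theta\ge \theta(4\varepsilon^2/h^2+\kappa)-\kappa\theta=4\theta\varepsilon^2/h^2,
\]
as needed. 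The case $n=1$ is easier, since $A_0^{(1)}\ge \tfrac{\theta}{1-\theta}A_0^{(1)}$ because $\theta\le1/2$.

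\textbf{Remaining terms.} By Lemma \ref{lem:L21_DC}(ii) and (i), the coefficients of $\Xi^{n-2}$ are all nonnegative, so
\[
\|\Xi^{n-2}[\phi]\|_\infty\le \sum_{k=1}^{n-2}(A^{(n)}_{n-k-1}-A^{(n)}_{n-k})+A^{(n)}_{n-1}=A_1^{(n)},
\]
the sum telescoping. The cut-off \eqref{sch:L21_n:e1} guarantees $\|\widehat\phi^{n,\theta}\|_\infty\le1$, and the same holds for $\phi^0$ when $n=1$. With $\kappa\ge\|f'\|_{C[-1,1]}$, Lemma \ref{lem:MBP_right} gives
\[
\|f(\widehat\phi^{n,\theta})+\kappa\widehat\phi^{n,\theta}\|_\infty\le\kappa .
\]

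\textbf{Conclusion.} Summing the three bounds, the right-hand side has $\infty$-norm at most
\[
(A_0^{(n)}-A_1^{(n)}-\kappa\theta)+A_1^{(n)}+\kappa=A_0^{(n)}+\kappa(1-\theta)
\]
(for $n=1$: $A_0^{(1)}-\kappa\theta+\kappa$). Combined with the bound on $(\mQ^n)^{-1}$, this gives $\|\phi^n\|_\infty\le1$ and closes the induction.
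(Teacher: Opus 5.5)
Your proof is correct and follows the paper's argument essentially step for step. Both use induction on $n$, the bound $\|(\mQ^n)^{-1}\|_\infty\le (A_0^{(n)}+\kappa(1-\theta))^{-1}$ from Lemma \ref{lem:MBP_left}, nonnegativity of the diagonal of $\mM^n$ via Lemma \ref{lem:L21_DC}(i),(iii) and \eqref{MBP:L21_tau}, the telescoping bound $\|\Xi^{n-2}[\phi]\|_\infty\le A_1^{(n)}$, and Lemma \ref{lem:MBP_right} for the explicit nonlinear term. The only difference is at $n=1$: you obtain nonnegativity of the diagonal of $\mM^1$ directly from \eqref{MBP:L21_tau} (via Lemma \ref{lem:L21_DC}(i) and $\theta\le 1/2$), whereas the paper invokes a separately stated condition on $\tau_1$, so your version is slightly tidier and uses only the theorem's stated hypothesis.
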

\begin{proof} This claim will be verified by the complete mathematical induction argument. For $ n = 1 $, 
the applications of Lemmas \ref{lem:MBP_left}--\ref{lem:MBP_right}  to \eqref{sch:L21_1_1} imply
	\begin{equation}\label{Formu:L21SAV_MBP_1}	
		( A^{(1)}_{0} + \kappa ( 1 - \theta ) ) \| \phi^{1} \|_{\infty}      
		\le  \| \mM^{1} \phi^{0}  \|_{\infty} + \kappa.
	\end{equation}
	Moreover, under the condition $\tau_1  \leq 1/\sqrt[\alpha]{ ( 4 \varepsilon^{2}/h^2 + \kappa )\theta \Gamma( 2 - \alpha ) }$, the diagonal elements
    $A^{(1)}_{0} -( 4 \varepsilon^{2}/h^2 + \kappa )\theta$  of $\mM^{1}$ are nonnegative,     
    and thus
	$
	\|\mM^{1} \|_{\infty} = A_0^{(1)} - \kappa \theta,
	$
	which yields $ 
	\|\mM^{1} \phi^{0} \|_{\infty}    
	\le \|\mM^{1} \|_{\infty} \|\phi^{0} \|_{\infty}    
	\le  A_0^{(1)} - \kappa \theta
	$.
	Inserting this estimate into \eqref{Formu:L21SAV_MBP_1}, we obtain
	$ \| \phi^{1} \|_{\infty} \leq 1 $.

	Now, suppose that $ \|\phi^k\|_{\infty} \leq 1 $ holds for all $ 0 \leq k \leq n-1 $ and some $n \ge 2$. First, by \eqref{sch:L21_n:e1} we see  $ \| \widehat{\phi}^{n,\theta} \|_{\infty} \leq 1 $. Next, we shall show that $ \|\phi^k\|_{\infty} \leq 1 $ holds for $k= n$. 
From Lemma \ref{lem:L21_DC} (i) and (iii), we know that
	$$
	A_0^{(n)} - A_1^{(n)}  > \frac{ \theta }{ 1- \theta } A_0^{(n)} \geq \frac{ 4 \theta \tau_{n}^{-\alpha} }{ 11(1- \theta) \Gamma(2-\alpha) } \geq \kappa \theta + \frac{4 \theta \varepsilon^2}{h^2},
	$$
  where the time stepsize condition \eqref{MBP:L21_tau} has been used in the last step. Thus, all elements of $\mM^{n}$ are positive and
	$ \|\mM^{n}\|_{\infty} = A_0^{(n)} - A_1^{(n)} - \kappa \theta $, which together with the hypothesis $ \|\phi^{n-1}\|_{\infty} \leq 1 $ yields
	\begin{equation}\label{Formu:L21_n_MBP_3}
		\|\mM^{n} \phi^{n-1}\|_{\infty} \le \|\mM^{n} \|_{\infty} \|\phi^{n-1} \|_{\infty} \leq A_0^{(n)} - A_1^{(n)} - \kappa \theta.
	\end{equation}
	Moreover, by the positivity and monotonicity of the discrete kernels $ \{ A^{(n)}_{n-k} \} $ and the assumption that $\|\phi^{k}\|_{\infty} \leq 1, 0 \leq k \leq n-1$, the second right-hand side term of \eqref{sch:L21_n_1} can be estimated by
	\begin{equation}\label{Formu:L21_n_MBP_4}
		\| \Xi^{n-2} [\phi] \|_{\infty} \leq \sum^{n-2}_{k=1} \big( A^{(n)}_{n-k-1} - A^{(n)}_{n-k} \big) \| \phi^{k} \|_{ \infty } + A^{(n)}_{n-1} \| \phi^{0} \|_{ \infty } \leq A_{1}^{(n)}.
	\end{equation}
	Collecting the estimates \eqref{Formu:L21_n_MBP_3}--\eqref{Formu:L21_n_MBP_4} and  applying Lemmas \ref{lem:MBP_left}--\ref{lem:MBP_right}, we deduce from \eqref{sch:L21_n_1} that
	\begin{equation}\label{Formu:L21_n_MBP_5}
		\begin{aligned}
			( A^{(n)}_{0} + \kappa ( 1 - \theta ) ) \| \phi^{n} \|_{\infty} 
             \leq \|\mQ^{n} \phi^{n} \|_{\infty} 
			  & \le \|\mM^{n} \phi^{n-1}\|_{\infty} + \| \Xi^{n-2} [\phi]  \|_{\infty} 
            +  \|  f(\widehat{\phi}^{n,\theta}) + \kappa \widehat{\phi}^{n,\theta}  \|_{\infty} \\ 
            & \leq A^{(n)}_{0} + \kappa ( 1 - \theta ),
		\end{aligned}
	\end{equation}
	which immediately yields $ \| \phi^{n} \|_{\infty} \leq 1$, thus completing the proof.
\end{proof}

\section{ Maximum-norm error estimate and discrete energy stability }\label{Sec:error_energy}

In this section, we establish an $\alpha$-robust maximum-norm error estimate and the discrete energy stability for the stabilized IMEX-Alikhanov scheme. To this end, we employ a specific graded mesh defined by $ t_{k} = T( k/N )^{\gamma} $ that satisfies conditions \textbf{M1} and \textbf{M2}.

We begin by recalling the bounds for the truncation errors $ R_{1}^{n} [w] := \p_t^{\alpha} w(t_{n-\theta}) -\mbD^{\alpha}_{\tau} w^{n}$ and $ R_{2}^{n} [w] := w(t_{n-\theta}) - \bar{w}^{n,\theta}$, which are summarized in the following lemmas.
\begin{lemma}[\cite{IMA_Chen_2021}]\label{lem:L21_robust} If $w(t)$ satisfies $\vert \partial_{t}^{\ell} w \vert \leq C_w (1+t^{\alpha - \ell}) \text{ for } 1\leq \ell \leq 3 $, the following estimate holds for $N \geq 3$, $\zeta_{N} = 1/(\ln N)$ and some positive constant $C_0$
	\begin{equation*}
		\begin{aligned}
			\sum^{n}_{j=1} P^{(n)}_{n-j} \vert R_{1}^{j} [w] \vert \leq C_0 \frac{11 e^{\gamma} \Gamma(1+\zeta_{N}-\alpha)}{4\Gamma(1+\zeta_{N})} T^{\alpha} \left(\frac{t_{n}}{T}\right)^{\zeta_{N}} N^{-\min\{\gamma\alpha, 3-\alpha\}}, \quad   1 \leq n \leq N.
		\end{aligned}
	\end{equation*}
\end{lemma}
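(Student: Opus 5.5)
The estimate in Lemma \ref{lem:L21_robust} is a weighted sum of local consistency errors, so I would prove it in two stages: first a pointwise bound on each $R_1^j[w]$, then a summation against the complementary kernels using the bounds \eqref{kernel:bound}.

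\textbf{Stage 1: pointwise bounds on $R_1^j[w]$.} I would treat $j=1$ and $j\ge 2$ separately.

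For $j=1$ the formula reduces to $A_0^{(1)}\nabla_\tau w^1$, a linear interpolation on $[t_0,t_1]$. Using $|\partial_t w|\le C_w(1+t^{\alpha-1})$ and integrating directly against $\omega_{1-\alpha}(t_{1-\theta}-s)$ should give $|R_1^1|\le C\,\tau_1^{\alpha}\,\omega_{1-\alpha}(t_{1})$ up to constants, or more simply $|R_1^1|\le C$. After weighting by $P^{(n)}_{n-1}$, this contributes $O(\tau_1^\alpha)=O(N^{-\gamma\alpha})$.

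For $j\ge2$ I would write $R_1^j$ as the sum of two parts:
\begin{itemize}
\item the error of the piecewise quadratic interpolant on the cells $[t_{k-1},t_k]$, $k\le j-1$;
\item the linear interpolation error on the final partial cell $[t_{j-1},t_{j-\theta}]$, which is second order only because $\theta=\alpha/2$ makes the Alikhanov superconvergence hold.
\end{itemize}
Following the standard Taylor-with-integral-remainder argument (as in Liao et al. and Chen--Stynes), I would aim for a bound of the form
$$|R_1^j|\le C\Big(t_{j-\theta}^{-\alpha}\,\tau_1^{\alpha}\,\text{(for the first cell)}+\sum_{k=2}^{j}\big(\ldots\big)\tau_k^{3}\,t_{k-1}^{\alpha-3}\,\omega_{-\alpha}\text{-type weights}\Big),$$
with $\tau_k\le C\tau\, t_k^{1-1/\gamma}$ taken from \textbf{M2}. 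On the graded mesh this leads to $|R_1^j|\le C\, t_j^{-\alpha} N^{-\min\{\gamma\alpha,3-\alpha\}}$, or in the sharper $\alpha$-robust form $|R_1^j|\le C\, t_j^{-\alpha}(t_j/T)^{\zeta}N^{-\min\{\gamma\alpha,3-\alpha\}}$ for any $\zeta$ in a suitable range.

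\textbf{Stage 2: summation.} I would use a generalization of \eqref{kernel:bound}, namely
$$\sum_{j} P^{(n)}_{n-j}\,\omega_{1+\zeta-\alpha}(t_j)\le \tfrac{11}{4}\,\omega_{1+\zeta}(t_n).$$
This follows by the same argument: it uses $P^{(n)}$ being complementary to the $A$-kernels, together with the fact that $\omega_{1+\zeta-\alpha}$ is the Caputo derivative of $\omega_{1+\zeta}$, with monotonicity. Writing $t_j^{-\alpha+\zeta}=\Gamma(1+\zeta-\alpha)\,\omega_{1+\zeta-\alpha}(t_j)$ and choosing $\zeta=\zeta_N=1/\ln N$ then produces the factor $\Gamma(1+\zeta_N-\alpha)/\Gamma(1+\zeta_N)$ and $t_n^{\zeta_N}$, as in the statement.

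\textbf{Where the $e^\gamma$ comes from.} The choice $\zeta_N=1/\ln N$ is what keeps $\Gamma(1+\zeta_N-\alpha)$ bounded as $\alpha\to1$. The price is a factor $N^{\gamma\zeta_N}=e^{\gamma}$, which appears when $(t_1/T)^{-\zeta}=N^{\gamma\zeta}$ is absorbed in the first-cell term.

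\textbf{Main obstacle.} The hardest step is Stage 1 for the intermediate cells. One must obtain the pointwise bound with the exact weight $t_j^{\zeta-\alpha}$ and a constant that does not blow up as $\alpha\to1$. This requires splitting the cells $k<j$ into those with $t_k\le t_j/2$ and the rest, and bounding the kernel derivative $\omega_{-\alpha}$ carefully. I would follow the splitting technique of Chen--Stynes and track the constants uniformly in $\alpha$.
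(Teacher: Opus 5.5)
Your outline matches the argument of the cited Chen--Stynes result, which the paper imports without proof. That argument is: an $\alpha$-robust pointwise bound $|R_1^j[w]|\le C_0T^{\alpha}t_j^{-\alpha}N^{-\min\{\gamma\alpha,3-\alpha\}}$, then $t_j^{-\alpha}\le t_1^{-\zeta_N}t_j^{\zeta_N-\alpha}=e^{\gamma}T^{-\zeta_N}\Gamma(1+\zeta_N-\alpha)\,\omega_{1+\zeta_N-\alpha}(t_j)$, and finally the shifted bound $\sum_{j}P^{(n)}_{n-j}\omega_{1+\zeta_N-\alpha}(t_j)\le\frac{11}{4}\omega_{1+\zeta_N}(t_n)$ (same proof as \eqref{kernel:bound}, valid because $N\ge 3$ gives $0<\zeta_N<1$, so $\omega_{\zeta_N}$ is decreasing), which reproduces the stated constant exactly. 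One correction: no $t_j^{\zeta-\alpha}$ weight needs to be built into Stage 1 (for fixed $\zeta$ and an $N$-independent constant such a pointwise bound would in general fail already at $j=1$, where $R_1^1$ does not decay in $N$); the factor $e^{\gamma}$ comes from $t_j\ge t_1$ for every $j$, not just the first cell, so the only real work is the $\alpha$-robustness of $C_0$, which you, like the paper, take from Chen--Stynes.
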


\begin{lemma}[\cite{JSC_Fu_2023,CiCP_Liao_2021}]\label{lem:weight_err} If $w(t)$ satisfies $\vert \partial_{t}^{\ell} w \vert \leq C_w (1+t^{\alpha - \ell} ) \text{ for } 1\leq \ell \leq 2 $, then it holds
$$
	\left \vert R_{2}^{n} [w] \right \vert \leq C_1 N^{-\min\{\gamma\alpha,2\}}, \quad   1 \leq n \leq N,
$$ 
where $C_1$ is an $\alpha$-robust positive constant.
\end{lemma}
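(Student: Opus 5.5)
The statement bounds the weighted interpolation error $R_2^n[w]=w(t_{n-\theta})-\bar w^{n,\theta}$. I would prove it with Taylor expansions with integral remainder, splitting into the first step $n=1$ and the later steps $n\ge 2$. Write $t_{n-\theta}=(1-\theta)t_n+\theta t_{n-1}$ and expand $w(t_n)$ and $w(t_{n-1})$ about $t_{n-\theta}$. The first-order terms cancel exactly because $(1-\theta)(t_n-t_{n-\theta})+\theta(t_{n-1}-t_{n-\theta})=0$. This gives the Peano-kernel representation
\[
R_2^n[w]=-(1-\theta)\int_{t_{n-\theta}}^{t_n}(t_n-s)w''(s)\,ds-\theta\int_{t_{n-1}}^{t_{n-\theta}}(s-t_{n-1})w''(s)\,ds .
\]
Both kernels are nonnegative and bounded by $\tau_n$ on $[t_{n-1},t_n]$.

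For $n\ge 2$ the interval $[t_{n-1},t_n]$ stays away from the origin, and $t_{n-1}\ge t_n/C_{2\gamma}$ by \textbf{M2}. Hence $|w''(s)|\le C_w(1+t_{n-1}^{\alpha-2})\le C(1+t_n^{\alpha-2})$ on this interval, and
\[
|R_2^n[w]|\le C\tau_n^2\bigl(1+t_n^{\alpha-2}\bigr).
\]
Next I use \textbf{M2}: $\tau_n\le C_{1\gamma}\tau t_n^{1-1/\gamma}$ with $\tau\le C_{4\gamma}N^{-1}$. This yields
\[
\tau_n^2t_n^{\alpha-2}\le C N^{-2}t_n^{\alpha-2/\gamma}.
\]
There are two cases.
\begin{itemize}
\item If $\gamma\alpha\ge 2$, the exponent $\alpha-2/\gamma$ is nonnegative, so the bound is $CN^{-2}$.
\item If $\gamma\alpha<2$, the exponent is negative, so the worst case is the smallest $t_n$. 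On the graded mesh $t_n\ge t_1=TN^{-\gamma}$, so $t_n^{\alpha-2/\gamma}\le CN^{2-\gamma\alpha}$, and the bound is $CN^{-\gamma\alpha}$.
\end{itemize}
For a general mesh the same conclusion follows from $t_n\ge\tau_1$ together with the lower bound on $\tau_1$ that the graded mesh supplies. In either case I also use $\tau_n^2\le CN^{-2}$ for the part of the bound coming from the constant $1$.

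For $n=1$, $w''$ may not be integrable near $0$ when $\alpha<1$, so I would not use the second-order expansion. Instead I write $w(t_{1-\theta})-w(t_0)$ and $w(t_1)-w(t_0)$ as integrals of $w'$ and use $|w'|\le C(1+s^{\alpha-1})$:
\[
|R_2^1[w]|\le C\int_0^{t_1}(1+s^{\alpha-1})\,ds\le C\bigl(\tau_1+\tau_1^\alpha/\alpha\bigr).
\]
This leaves a $1/\alpha$ factor, which is not $\alpha$-robust. The sharper route is to use that the first-order-in-$w'$ terms cancel, and bound through $|w'(s)-w'(t_{1-\theta})|$. An alternative is to integrate $s^{\alpha-1}$ against kernels that vanish at the endpoints, giving $C\tau_1^\alpha$ with an $\alpha$-uniform constant; the combination $(1-\theta)t_{1-\theta}^\alpha-\ldots$ scales like $\tau_1^\alpha$ times a bounded function of $\alpha$. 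Then $\tau_1\le C_{3\gamma}N^{-\gamma}$ gives $CN^{-\gamma\alpha}\le CN^{-\min\{\gamma\alpha,2\}}$.

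The main obstacle is making the $n=1$ bound (and the $n\ge2$ constants) genuinely $\alpha$-robust, rather than carrying factors like $1/\alpha$ from integrating $s^{\alpha-1}$. I would handle it by keeping the exact Peano-kernel weights and evaluating the resulting Beta-type integrals explicitly, checking that they stay bounded uniformly for $\alpha\in(0,1)$.
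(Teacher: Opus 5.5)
Your proof is correct. It follows the standard Taylor/Peano-kernel argument of the works the paper cites for this lemma; the paper gives no proof of its own. For $n\ge2$ the bound $|R_2^n[w]|\le\tau_n\int_{t_{n-1}}^{t_n}|w''(s)|\,ds\le C\tau_n^2(1+t_n^{\alpha-2})$ is combined with \textbf{M2} and the graded mesh, and for $n=1$ a first-derivative bound is used.

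The factor $1/\alpha$ you worry about at $n=1$ is harmless: in this paper ``$\alpha$-robust'' means bounded as $\alpha\to1^-$, so your estimate $C(\tau_1+\tau_1^\alpha/\alpha)$ already suffices. If you also want uniformity as $\alpha\to0^+$, use the identity $R_2^1[w]=\theta\int_0^{t_{1-\theta}}w'(s)\,ds-(1-\theta)\int_{t_{1-\theta}}^{t_1}w'(s)\,ds$, in which the weight $\theta=\alpha/2$ cancels the $1/\alpha$.
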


Moreover, denote the linear extrapolation error as
$
R_{3}^{n} [w] := w( t_{n-\theta} ) - \widetilde{w}( t_{n-\theta} ),
$
where $ \widetilde{w}( t_{n-\theta} ) :=w( t_{0} )$ for $n=1$, and $ \widetilde{w}( t_{n-\theta} ) := ( 1 + (1-\theta) r_{n} ) w( t_{n-1} ) - (1-\theta) r_{n} w( t_{n-2} )$ for $n \geq 2$.

\begin{lemma}\label{lem:extra_err} 
	If $w(t)$ satisfies $\vert \partial_{t}^{\ell} w \vert \leq C_w (1+t^{\alpha - \ell} ) \text{ for } 1\leq \ell \leq 2 $, then it holds
	\begin{equation*}\label{Conclu:extra}
	\left \vert R_{3}^{n} [w] \right \vert \leq C_2  N^{-\min\{\gamma\alpha,2\}}, \quad   1 \leq n \leq N,
    \end{equation*}
 where $C_2$ is an $\alpha$-robust positive constant.
\end{lemma}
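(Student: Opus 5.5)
The plan is to express $R_3^n[w]$ through Taylor expansions with integral remainder and to treat the first step, the early steps near $t=0$, and the later steps separately, using the mesh properties in {\bf M2} (with $t_k=T(k/N)^\gamma$).

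For $n=1$ we have $R_3^1[w]=w(t_{1-\theta})-w(t_0)=\int_0^{t_{1-\theta}}\partial_t w\,ds$. Since $|\partial_t w|\le C_w(1+s^{\alpha-1})$, this gives $|R_3^1|\le C_w(t_1+t_1^\alpha/\alpha)$. The factor $1/\alpha$ is not $\alpha$-robust as $\alpha\to0$, but it is bounded as $\alpha\to1$, which is the sense in which robustness is used in the paper. Then $\tau_1\le C_{3\gamma}N^{-\gamma}$ gives $|R_3^1|\le CN^{-\gamma\alpha}$.

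For $n\ge2$, the extrapolant is the linear interpolant of $w$ at $t_{n-2},t_{n-1}$ evaluated at $t_{n-\theta}=t_{n-1}+(1-\theta)\tau_n$. Expanding about $t_{n-1}$ gives
\begin{equation*}
R_3^n[w]=\int_{t_{n-1}}^{t_{n-\theta}}(t_{n-\theta}-s)\,\partial_{tt}w(s)\,ds+(1-\theta)r_n\int_{t_{n-2}}^{t_{n-1}}(s-t_{n-2})\,\partial_{tt}w(s)\,ds,
\end{equation*}
since the first-order terms cancel by construction. With $|\partial_{tt}w|\le C_w(1+s^{\alpha-2})$ and $r_n\le r^*$, this yields
\begin{equation*}
|R_3^n|\le C\Big(\tau_n\int_{t_{n-1}}^{t_{n-\theta}}(1+s^{\alpha-2})\,ds+\tau_{n-1}\int_{t_{n-2}}^{t_{n-1}}(1+s^{\alpha-2})\,ds\Big).
\end{equation*}

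For $n=2$ the second integral is singular at $s=t_0=0$. Here I would instead bound $|(s-t_0)\,\partial_{tt}w|\le C(s+s^{\alpha-1})$, which integrates to $C(t_1^2+t_1^\alpha/\alpha)\le CN^{-\gamma\alpha}$. The first integral is bounded by $\tau_2\,\tau_2\,t_1^{\alpha-2}\le C\tau_1^\alpha$, using $\tau_2\le C\tau_1$ and $t_1=\tau_1$.

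For $n\ge3$ we have $s\ge t_{n-2}\ge C_{2\gamma}^{-2}t_n$ on the whole range, so
\begin{equation*}
|R_3^n|\le C\big(\tau_n^2+\tau_n^2\,t_n^{\alpha-2}\big),
\end{equation*}
using $\tau_{n-1}\le C\tau_n$ (from $r_n\ge r_*$). By {\bf M2}, $\tau_n\le C\tau\min\{1,t_n^{1-1/\gamma}\}$ with $\tau\le CN^{-1}$. This gives
\begin{equation*}
\tau_n^2\,t_n^{\alpha-2}\le CN^{-2}\,t_n^{\alpha-2/\gamma}.
\end{equation*}
If $\gamma\alpha\ge2$, the exponent $\alpha-2/\gamma$ is nonnegative and the bound is $CN^{-2}$. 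Otherwise, I would use $\tau_n\le CN^{-1}t_n^{1-1/\gamma}$ together with $t_n\ge t_1\sim N^{-\gamma}$, or more precisely $t_n^{1/\gamma}\sim n/N$ and $\tau_n\sim t_n/n$ on the graded mesh. Writing $\tau_n^2 t_n^{\alpha-2}\sim t_n^\alpha n^{-2}\sim N^{-\gamma\alpha}n^{\gamma\alpha-2}\le N^{-\gamma\alpha}$ when $\gamma\alpha<2$ then gives $CN^{-\min\{\gamma\alpha,2\}}$.

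The main obstacle is this last case split for the early steps, where the exponent bookkeeping must be carried out with mesh-independent constants that remain bounded as $\alpha\to1$. I would follow the corresponding argument for $R_2^n$ in Lemma~\ref{lem:weight_err}, which has an identical structure apart from the extra backward step $t_{n-2}$.
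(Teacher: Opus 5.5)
Your argument is correct, but it is organized differently from the paper's. For $n\ge2$ the paper never re-expands $w$. It uses the algebraic identity
\[
\bar w^{n,\theta}-\widetilde w(t_{n-\theta})=(1-\theta)\bigl[w(t_n)-(1+r_n)w(t_{n-1})+r_nw(t_{n-2})\bigr],
\]
so that $|R_3^n[w]|\le|R_2^n[w]|+(1-\theta)\,|w(t_n)-((1+r_n)w(t_{n-1})-r_nw(t_{n-2}))|$. It then cites Lemma~\ref{lem:weight_err} for the first term and Lemma 2.3 of \cite{JCP_Zhang_2025} (the linear-extrapolation error at $t_n$) for the second; the case $n=1$ is handled as you do.

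You instead write $R_3^n$ directly via the Taylor formula with integral remainder about $t_{n-1}$ and estimate it by hand. You separate $n=2$, where integrability of $s^{\alpha-1}$ at $t_0=0$ is used, from $n\ge3$, where $t_{n-2}\ge C_{2\gamma}^{-2}t_n$ together with {\bf M2} and $t_1=TN^{-\gamma}$ finishes the job.

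The paper's proof is shorter and modular, reusing two existing truncation bounds whose proofs contain essentially your computation. Yours is self-contained and makes the dependence on $r_*,r^*,C_{1\gamma},C_{2\gamma}$ explicit. The ``main obstacle'' you flag at the end is therefore already resolved by what you wrote.

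In the case $\gamma\alpha<2$ you do not even need the graded-mesh lower bound on $t_1$. Since $\tau_n\le t_n$, one has
\[
\tau_n^2t_n^{\alpha-2}=(\tau_n/t_n)^2t_n^\alpha\le(\tau_n/t_n)^{\gamma\alpha}t_n^\alpha\le CN^{-\gamma\alpha},
\]
which uses only {\bf M2}.

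Your $n=1$ bound also correctly keeps the factor $1/\alpha$ from $\int_0^{t_1}s^{\alpha-1}\,ds$, which the paper silently drops. As you note, this factor is harmless for robustness as $\alpha\to1$.
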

\begin{proof} For $ n = 1 $, by the Newton-Leibniz formula and the regularity assumption on $w$, we have
\begin{equation*}
\vert R^{1}_{3} [w] \vert =  \left\vert \int^{ t_{1-\theta} }_{ t_{0} } w^{\prime}(s) ds \right\vert \leq \int^{ t_{1} }_{ t_{0} } \vert w^{\prime}(s) \vert ds \leq C_w \int^{ t_{1} }_{ t_{0} } (1 + s^{\alpha -1}) d s \leq C_w ( \tau_{1} + \tau_{1}^{\alpha} ),
\end{equation*}
which further implies
\begin{equation*}
\vert R^{1}_{3} [w] \vert \leq C_w C_{3\gamma} ( N^{-\gamma} + N^{-\gamma\alpha} ).
\end{equation*}

Next, for $ n \geq 2 $, the triangle inequality gives
\begin{equation}\label{Trun:1}
\begin{aligned}
\vert R^{n}_{3} [w] \vert
& \leq  \vert R_{2}^{n} [w] \vert + \vert \bar{w}^{n,\theta} - \widetilde{w}( t_{n-\theta} ) \vert \\
& = \vert R_{2}^{n} [w] \vert + ( 1 - \theta ) \vert w( t_{n} ) - ( ( 1+r_{n} ) w ( t_{n-1} ) - r_{n} w ( t_{n-2} ) ) \vert.
\end{aligned}
\end{equation}
According to \cite[Lemma 2.3]{JCP_Zhang_2025}, the second term on the right-hand side of \eqref{Trun:1} can be estimated as
$$
 \vert w( t_{n} ) - ( ( 1+r_{n} ) w ( t_{n-1} ) - r_{n} w ( t_{n-2} ) ) \vert \leq C_{w,\gamma} N^{-\min\{\gamma\alpha,2\}}.
$$
Applying this estimate and Lemma~\ref{lem:weight_err} to \eqref{Trun:1} yields the desired result for $ n \geq 2 $. Let $C_2:=\max\{2C_w C_{3\gamma}, C_1+ ( 1 - \theta )C_{w,\gamma}\}$. This completes the proof.
\end{proof}

\subsection{Maximum-norm error estimate}
Taking the advantage of the discrete MBP established in Theorem \ref{thm:MBP_L21_n} and applying the discrete fractional Gr\"onwall inequality presented in Lemma \ref{thm:Gron}, we can derive the maximum-norm error estimate for $ e^{n} := \phi(t_{n}) - \phi^{n} $ with $ 0 \leq n \leq N $, without assuming the global Lipschitz continuity of the nonlinear potential $f$.

\begin{theorem}\label{thm:Convergence} 
  Suppose that the solution of \eqref{Model:tAC} satisfies the regularity assumption \eqref{Assum:regu}. Let $\kappa \geq \left\|f^{\prime}\right\|_{C[-1, 1]}$. Moreover, assume that the time stepsize condition \eqref{MBP:L21_tau} and the maximum stepsize condition
	\begin{equation}\label{Condi:Cover}
		\tau \leq \sqrt[\alpha]{2/(33 \kappa \Gamma(2-\alpha) ( 1 + 2 (1-\theta) r^* ) )}
	\end{equation}
    hold. Then the solution of the stabilized IMEX-Alikhanov scheme \eqref{sch:L21_n}  is convergent in the maximum norm; that is,
	\begin{equation*}\label{Conclu:Cover}
		\| e^{n} \|_{\infty} \leq C_3 ( N^{-\min\{ \gamma \alpha, 2 \}} + h^2 ),
	\end{equation*}
  where $C_3$ is an $\alpha$-robust positive constant that depends on $C_0 \sim C_2$, $\alpha$ and $\kappa$.
\end{theorem}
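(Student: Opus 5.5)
We derive the error equation, bound the nonlinear and cut-off terms pointwise using the discrete MBP, and close with the discrete fractional Gr\"onwall inequality of Lemma \ref{thm:Gron}.

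\textbf{Error equation.} Evaluate \eqref{Model:tAC} at $t_{n-\theta}$ on the grid and write it in the form of \eqref{sch:L21_n}:
\begin{equation*}
\mbD^{\alpha}_{\tau}\phi(t_n) = \varepsilon^2\Delta_h \bar{\phi}(t_n)^{\theta} + f(\widetilde{\phi}(t_{n-\theta})) - \kappa\big(\bar{\phi}(t_n)^{\theta}-\widetilde{\phi}(t_{n-\theta})\big) + \mathcal{T}^n .
\end{equation*}
Here $\bar\phi(t_n)^\theta$ denotes the $\theta$-weighted average of $\phi(t_n)$ and $\phi(t_{n-1})$. The local truncation error $\mathcal{T}^n$ collects four pieces:
\begin{itemize}
\item[(a)] $R_1^n[\phi]$;
\item[(b)] $\varepsilon^2\Delta R_2^n$-type terms together with the spatial consistency error $O(h^2)$, controlled via $\|\phi\|_{W^{4,\infty}}$ and $\|\partial_t^\ell\phi\|_{W^{2,\infty}}$;
\item[(c)] $f(\phi(t_{n-\theta}))-f(\widetilde\phi(t_{n-\theta}))$, of size $\|f'\|\,|R_3^n|$;
\item[(d)] $\kappa(R_2^n - R_3^n)$.
\end{itemize}
Since $\phi$ satisfies the continuous MBP, $\widetilde\phi$ is bounded, so $f'$ is bounded on the relevant range. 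By Lemmas \ref{lem:weight_err}--\ref{lem:extra_err}, the pieces (b)--(d) split as $\xi^n+\eta^n$ with $\|\eta^n\|_\infty\le C(N^{-\min\{\gamma\alpha,2\}}+h^2)$. The piece $\xi^n=|R_1^n|$ is handled through Lemma \ref{lem:L21_robust}.

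\textbf{Pointwise estimate.} Subtracting \eqref{sch:L21_n} gives
\begin{equation*}
\mbD^\alpha_\tau e^n = \varepsilon^2\Delta_h\bar e^{n,\theta} - \kappa\bar e^{n,\theta} + \big[f(\widetilde\phi)+\kappa\widetilde\phi\big]-\big[f(\widehat\phi^{n,\theta})+\kappa\widehat\phi^{n,\theta}\big] + \mathcal{T}^n .
\end{equation*}
The key step is to treat the cut-off. Since $|\phi|\le 1$, the truncation to $[-1,1]$ is a $1$-Lipschitz projection. The exact solution's extrapolant $\widetilde\phi$ may leave $[-1,1]$, so we write $\widetilde\phi=\Pi\widetilde\phi + (\widetilde\phi-\Pi\widetilde\phi)$, where $\Pi$ is the truncation to $[-1,1]$. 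Because $|\phi(t_{n-\theta})|\le 1$, we have $|\widetilde\phi-\Pi\widetilde\phi|\le|R_3^n|$. Moreover, $|\Pi\widetilde\phi-\widehat\phi^{n,\theta}|\le (1+(1-\theta)r_n)|e^{n-1}|+(1-\theta)r_n|e^{n-2}|$.

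On $[-1,1]$, Lemma \ref{lem:MBP_right}'s underlying fact $0\le f'+\kappa\le 2\kappa$ gives a Lipschitz constant $2\kappa$ for $f+\kappa\,\mathrm{id}$. Hence the bracket is bounded by
\begin{equation*}
2\kappa\big[(1+(1-\theta)r^*)|e^{n-1}|+(1-\theta)r^*|e^{n-2}|\big]+C|R_3^n| .
\end{equation*}

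Next, we take a grid point $\mathbf{x}_{i}$ where $|e^n|$ attains its maximum and multiply the error equation by $\mathrm{sgn}(e^n_{i})$. As in the MBP proof, we rewrite $\mbD^\alpha_\tau e^n$ using the positivity of $A_0^{(n)}-A_1^{(n)}-\kappa\theta-4\theta\varepsilon^2/h^2$ (from step condition \eqref{MBP:L21_tau}) and the monotonicity of the kernels (Lemma \ref{lem:L21_DC}). The diffusion terms then have a favourable sign. Carried out through the operators $\mQ^n,\mM^n$ and Lemma \ref{lem:MBP_left}, this yields
\begin{equation*}
\sum_{k=1}^n A^{(n)}_{n-k}\nabla_\tau\|e^k\|_\infty \le \lambda_1\|e^{n-1}\|_\infty+\lambda_2\|e^{n-2}\|_\infty + |R_1^n| + \eta^n ,
\end{equation*}
with $\lambda_1=2\kappa(1+(1-\theta)r^*)+\kappa\theta$, $\lambda_2=2\kappa(1-\theta)r^*$ and $\eta^n$ the remaining consistency terms. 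Dropping $-\kappa(1-\theta)\|e^n\|$ is harmless. The sum $\Lambda$ of the $\lambda$'s should match \eqref{Condi:Cover}, namely $\Lambda \approx 2\kappa(1+2(1-\theta)r^*)$.

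\textbf{Closing.} Lemma \ref{thm:Gron}, in the form \eqref{Conclusion2:Gron} with $w^k=\|e^k\|_\infty$ and $e^0=0$, together with Lemma \ref{lem:L21_robust} for $\xi^j=|R_1^j|$ (noting $(t_n/T)^{\zeta_N}\le 1$ and $\gamma\alpha\le 3-\alpha$ in the relevant range), and $\omega_{1+\alpha}(t_n)\le C$, gives the claimed bound with an $\alpha$-robust constant.

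The main obstacle is the maximum-norm manipulation of $\mbD^\alpha_\tau e^n$. The scheme is implicit in $\bar e^{n,\theta}$ with the Laplacian, so obtaining a scalar inequality of the Gr\"onwall type requires combining the $M$-matrix structure of $\mQ^n$ with the sign of $\mM^n$ and the monotone kernel differences, exactly as in the proof of Theorem \ref{thm:MBP_L21_n}. This must be done while keeping $\nabla_\tau\|e^k\|_\infty$ in the convolution form; a telescoping rewrite of the history terms, as in the MBP proof, should make this work.
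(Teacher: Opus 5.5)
Your proposal is essentially the paper's proof. The shared steps are the rewriting $\mQ^n e^n=\mM^n e^{n-1}+\Xi^{n-2}(e)+\cdots$ handled exactly as in the MBP proof, the $2\kappa$ Lipschitz bound for $f+\kappa\,\mathrm{id}$ on $[-1,1]$, contractivity of the cut-off, and closure via \eqref{Conclusion2:Gron} with Lemmas \ref{lem:L21_robust}--\ref{lem:extra_err}. One small difference: the paper compares $\widehat\phi^{n,\theta}$ directly with $\phi(t_{n-\theta})$, which the cut-off leaves fixed, so $f$ is never evaluated outside $[-1,1]$; your reference point $\widetilde\phi$ instead requires $f'$ to be bounded on a larger interval.

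There is one bookkeeping slip, the extra $\kappa\theta$ in $\lambda_1$. Because $\|\mM^n\|_\infty=A_0^{(n)}-A_1^{(n)}-\kappa\theta$, the term $\kappa\theta\|e^{n-1}\|_\infty$ lands on the left with a favorable sign and can be dropped together with $\kappa(1-\theta)\|e^n\|_\infty$. Then $\Lambda=2\kappa(1+2(1-\theta)r^*)$ exactly, and the Gr\"onwall restriction $\tau_n\le\sqrt[\alpha]{4/(33\Gamma(2-\alpha)\Lambda)}$ is precisely \eqref{Condi:Cover}. With the extra $\kappa\theta$ kept, you would need a slightly stronger stepsize condition than the one stated.
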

\begin{proof}
For $ n \geq 2 $, subtracting \eqref{sch:L21_n} from \eqref{Model:tAC} gives the following error equation
\begin{equation}\label{Cover:2}
	\mathbb{D}^{\alpha}_{\tau} e^{n} = \varepsilon^2 \Delta_h \bar{e}^{n,\theta} + f( \phi( t_{n-\theta} ) ) - f( \widehat{\phi}^{n,\theta} ) + \kappa \bigl( \bar{\phi}^{n,\theta} - \widehat{\phi}^{n,\theta} \bigr) + R_{1}^{n} [\phi] + R_{2}^{n} [\Delta \phi] + R_{s}^{n}[\phi],
\end{equation}
where $ R_{s}^{n}[\phi]:= \varepsilon^{2} ( \Delta \bar{\phi}( t_{n-\theta} ) - \Delta_{h} \bar{\phi}( t_{n-\theta} ))$ represents the spatial truncation error, which can be estimated by
\begin{equation}\label{Cover:2_2}
\| R_{s}^{n}[\phi] \|_{\infty} \leq C h^{2} \| \bar{\phi} ( t_{n-\theta} ) \|_{W^{4,\infty} (\Omega)}.
\end{equation}

Note that the stabilization term can be equivalently rewritten as
\begin{equation}\label{Cover:22}
	\bar{\phi}^{n,\theta} - \widehat{\phi}^{n,\theta} = - \bar{e}^{n,\theta} - R_{2}^{n} [\phi] + \phi( t_{n-\theta} ) - \widehat{\phi}^{n,\theta}.
\end{equation}
Then, similar to \eqref{sch:L21_n_1}, the error equation \eqref{Cover:2} also reads as
\begin{equation}\label{Cover:3}
	\begin{aligned}
		  \mQ^{n} e^{n} 
		& = \mM^{n} e^{n-1} + \Xi^{n-2} (e) + f( \phi(t_{n-\theta}) ) - f( \widehat{\phi}^{n,\theta} )
        + \kappa \bigl( \phi(t_{n-\theta}) - \widehat{\phi}^{n,\theta} \bigr)  \\
		& \qquad  + R_{1}^{n}[\phi]- \kappa R_{2}^{n} [\phi] + R_{2}^{n} [\Delta\phi] + R_{s}^{n}[\phi].
	\end{aligned}
\end{equation}
It follows from $ \| \widehat{\phi}^{n,\theta} \|_{\infty} \leq 1 $  and the condition of Lemma \ref{lem:MBP_right} that
$$
\begin{aligned}
	\| f( \phi(t_{n-\theta}) ) - f( \widehat{\phi}^{n,\theta} ) \|_{\infty} \leq \kappa \| \phi(t_{n-\theta}) - \widehat{\phi}^{n,\theta} \|_{\infty}.
\end{aligned}
$$
Following the process of \eqref{Formu:L21_n_MBP_3}--\eqref{Formu:L21_n_MBP_5} and using the above estimate, we obtain from \eqref{Cover:3} that
\begin{equation*}
	\begin{aligned}
		 & ( A^{(n)}_{0} + \kappa ( 1 - \theta ) ) \| e^{n} \|_{\infty} 
	     \leq \|  \mQ^{n} e^{n} \|_{\infty} \\
		& \quad \leq ( A_0^{(n)} - A_1^{(n)} - \kappa \theta ) \| e^{n-1} \|_{\infty} + \sum^{n-2}_{k=1} \big( A^{(n)}_{n-k-1} - A^{(n)}_{n-k} \big) \| e^{k} \|_{ \infty } + A^{(n)}_{n-1} \| e^{0} \|_{ \infty } \\
		& \qquad + 2 \kappa \| \phi(t_{n-\theta}) - \widehat{\phi}^{n,\theta} \|_{\infty} +  \| R_{1}^{n} [\phi] \|_{\infty} + \| \kappa R_{2}^{n} [\phi] \|_{\infty} + \| R_{2}^{n} [\Delta\phi] \|_{\infty} + \| R_{s}^{n}[\phi] \|_{\infty},
	\end{aligned}
\end{equation*}
which together with the fact $ 0 < \theta < 1 $ implies that
\begin{equation}\label{Cover:4}
	\begin{aligned}
		&\sum^{n}_{k=1} A^{(n)}_{n-k} \nabla_{\tau} \| e^{k} \|_{\infty} \\
        & \quad \leq 2 \kappa \| \phi(t_{n-\theta}) - \widehat{\phi}^{n,\theta} \|_{\infty}
        +  \| R_{1}^{n} [\phi] \|_{\infty} + \kappa\|  R_{2}^{n} [\phi] \|_{\infty}
		 + \| R_{2}^{n} [\Delta\phi] \|_{\infty} + \| R_{s}^{n}[\phi] \|_{\infty}.
	\end{aligned}
\end{equation}

Moreover, since the cut-off operation is contractive (due to the MBP-preserving property of the exact solution), the first term on the right-hand side of \eqref{Cover:4} can be bounded as follows:
\begin{equation}\label{Cover:5}
	\begin{aligned} 
		\| \phi(t_{n-\theta}) - \widehat{\phi}^{n,\theta} \|_{\infty} & \le \| \phi(t_{n-\theta}) - \widetilde{\phi}( t_{n-\theta} ) \|_{\infty}+\| \widetilde{\phi}( t_{n-\theta} )  - \widehat{\phi}^{n,\theta} \|_{\infty}\\
		& \leq \| R_{3}^{n}[\phi] \|_{\infty} + ( 1 + (1-\theta) r_{n} ) \| e^{n-1}  \|_{\infty} + (1-\theta) r_{n} \| e^{n-2}  \|_{\infty}.
	\end{aligned}
\end{equation}
Thus, inserting \eqref{Cover:5} into \eqref{Cover:4} yields
\begin{equation}\label{Cover:6}
	\begin{aligned}
		\sum^{n}_{k=1} A^{(n)}_{n-k} \nabla_{\tau} \| e^{k} \|_{\infty} 
		& \leq 2 \kappa \left( 1 + (1-\theta) r_{n} \right) \| e^{n-1}  \|_{\infty} + 2 \kappa (1-\theta) r_{n} \| e^{n-2}  \|_{\infty}+  \| R_{1}^{n} [\phi] \|_{\infty} \\
        & \qquad  + \kappa\|  R_{2}^{n} [\phi] \|_{\infty} + \| R_{2}^{n} [\Delta\phi] \|_{\infty} + 2 \kappa \| R_{3}^{n} [\phi] \|_{\infty} + \| R_{s}^{n} [\phi] \|_{\infty}
        \\
        & \leq 2 \kappa \left( 1 + (1-\theta) r^{*} \right) \| e^{n-1}  \|_{\infty} + 2 \kappa (1-\theta) r^{*} \| e^{n-2}  \|_{\infty}+  \| R_{1}^{n} [\phi] \|_{\infty} \\
		   & \qquad  + \kappa\|  R_{2}^{n} [\phi] \|_{\infty} + \| R_{2}^{n} [\Delta\phi] \|_{\infty} + 2 \kappa \| R_{3}^{n} [\phi] \|_{\infty} + \| R_{s}^{n} [\phi] \|_{\infty},
	\end{aligned}
\end{equation}
for $ n \geq 2 $, which takes the form of \eqref{Condi:Gron} with the substitutions $ w^{k} := \| e^{k} \|_{\infty} $ and 
$$ 
\xi^{n} =  \| R_{1}^{n} [\phi] \|_{\infty}, \quad \eta^{n} =  \kappa \| R_{2}^{n} [\phi] \|_{\infty} + \| R_{2}^{n} [\Delta\phi] \|_{\infty} + 2 \kappa\| R_{3}^{n} [\phi] \|_{\infty} + \| R_{s}^{n}[\phi] \|_{\infty}, 
$$
$$
\lambda_{1} = 2 \kappa \left( 1 + (1-\theta) r^{*} \right), \quad \lambda_{2} = 2 \kappa \left(1-\theta\right) r^{*}, \quad \lambda_{k} = 0 \  \text{for} \  k = 0, \  k \geq 3;\quad \Lambda = 2 \kappa \left( 1 + 2(1-\theta) r^{*} \right).
$$

For the first time level $ n = 1 $, the corresponding error equation is given by
\begin{equation*}
	\mQ^{1} e^{1} = f( t_{1-\theta} ) - f( \phi^{0} ) + R_{1}^{1} [\phi] - \kappa R_{2}^{1}[\phi]  + R_{2}^{1} [\Delta\phi] + \kappa R_{3}^{1}[\phi] + R_{s}^{1}[\phi],
\end{equation*}
where $e^{0} = 0$ and $ \bar{\phi}^{1,\theta} - \phi^{0} = - \bar{e}^{1,\theta} - R_{2}^{1}[\phi] + R_{3}^{1}[\phi] $ have been used. Using the mean value theorem, we have
$$
\| f( \phi( t_{1-\theta} ) ) - f( \phi^{0} ) \|_{\infty} \leq \kappa \| R_{3}^{1}[\phi] \|_{\infty}.
$$
Further, with the help of the triangle inequality and Lemma \ref{lem:MBP_left}, one obtains
\begin{equation*}
	\begin{aligned}
		& A^{(1)}_{0} \| e^{1} \|_{\infty} \leq \|\mQ^{1} e^{1}  \|_{\infty} 
		 \leq \|R_{1}^{1}[\phi]\|_{\infty} + \kappa \| R_{2}^{1}[\phi] \|_{\infty} + \| R_{2}^{1} [\Delta\phi] \|_{\infty} + 2 \kappa\|R_{3}^{1}[\phi]\|_{\infty} + \| R_{s}^{1}[\phi] \|_{\infty},
	\end{aligned}
\end{equation*}
which also takes the form of \eqref{Condi:Gron}, i.e.,
\begin{equation}\label{Cover:7}
	\begin{aligned}
		A^{(1)}_{0} \nabla_{\tau} \| e^{1} \|_{\infty} \leq 
         \|R_{1}^{1}[\phi]\|_{\infty} + \kappa\| R_{2}^{1}[\phi] \|_{\infty} + \| R_{2}^{1} [\Delta\phi] \|_{\infty} 
        + 2\kappa \|R_{3}^{1}[\phi]\|_{\infty} + \| R_{s}^{1}[\phi] \|_{\infty}.
	\end{aligned}
\end{equation}

Finally, under the time stepsize condition \eqref{Condi:Cover}, the application of discrete fractional Gr\"onwall inequality in Lemma \ref{thm:Gron} to \eqref{Cover:6}--\eqref{Cover:7} yields   
\begin{equation*}\label{Cover:8}
	\begin{aligned}
		\| e^{n} \|_{\infty} 
        & \leq C_{n,\alpha} \Big( \max_{1\leq k\leq n} \sum^{k}_{j=1} P^{(k)}_{k-j} \| R_{1}^{j} [\phi] \|_{\infty} \\
        &\qquad + \frac{11}{4} \omega_{1+\alpha}( t_{n} ) \max_{1\leq k \leq n } \left\{\kappa\| R_{2}^{k}[\phi] \|_{\infty}+ \| R_{2}^{k} [\Delta\phi] \|_{\infty} + 2\kappa\| R_{3}^{k} [\phi] \|_{\infty} + \| R_{s}^{k}[\phi] \|_{\infty} \right\} \Big) \\
		&\leq C_3 ( N^{-\min\{ \gamma \alpha, 2 \}} + h^2 ),
	\end{aligned}
\end{equation*}
where Lemmas \ref{lem:L21_robust}--\ref{lem:extra_err} and \eqref{Cover:2_2} have been used in the last inequality. The proof is completed. 
\end{proof}

Based on Lemma \ref{lem:extra_err} and Theorem \ref{thm:Convergence}, the following corollary can be derived from \eqref{Cover:5} immediately.
\begin{corollary}\label{cor:err}
	Assume that the conditions in Theorem \ref{thm:Convergence} hold, we have
	\begin{equation*}\label{Conclu:Cover_n_theta}
		\| \phi( t_{n-\theta} ) - \widehat{\phi}^{n,\theta} \|_{\infty} \leq C_3 ( N^{-\min\{ \gamma \alpha, 2 \}} + h^2 ).
	\end{equation*}
\end{corollary}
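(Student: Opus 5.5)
The plan is to read the bound off directly from the chain of inequalities \eqref{Cover:5}, feeding in Lemma~\ref{lem:extra_err} for the consistency part and Theorem~\ref{thm:Convergence} for the error part. I would treat $n=1$ and $n\ge 2$ separately, since the definition of $\widehat{\phi}^{n,\theta}$ differs.

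\textbf{Case $n=1$.} Here $\widehat{\phi}^{1,\theta}=\phi^{0}$, and the initial data are taken exactly, so $e^0=0$ and $\phi^0=\phi(t_0)$. Hence $\phi(t_{1-\theta})-\widehat{\phi}^{1,\theta}=R_3^1[\phi]$. Lemma~\ref{lem:extra_err}, applied pointwise on the grid, gives $\|R_3^1[\phi]\|_\infty\le C_2N^{-\min\{\gamma\alpha,2\}}$. This is legitimate because \eqref{Assum:regu} yields the required bounds on $\partial_t^\ell\phi$ uniformly in $\mathbf{x}$.

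\textbf{Case $n\ge 2$.} I first justify the contractivity claim behind \eqref{Cover:5}. The cut-off map $\Pi(s)=\min\{\max\{s,-1\},1\}$ is $1$-Lipschitz. By the continuous MBP, $\|\phi(t)\|_\infty\le 1$, and we are assuming $\|\phi_{\rm init}\|_\infty\le1$, as is implicit in the theorem. Therefore $\phi(t_{n-\theta})=\Pi(\phi(t_{n-\theta}))$, and
\[
|\phi(t_{n-\theta})-\widehat{\phi}^{n,\theta}|\le|\phi(t_{n-\theta})-\widetilde{\phi}(t_{n-\theta})|+|\widetilde{\phi}(t_{n-\theta})-\bigl((1+(1-\theta)r_n)\phi^{n-1}-(1-\theta)r_n\phi^{n-2}\bigr)|.
\]
This gives the second line of \eqref{Cover:5}. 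I then bound $\|R_3^n[\phi]\|_\infty$ by Lemma~\ref{lem:extra_err}. For the other two terms I use Theorem~\ref{thm:Convergence}: $\|e^{n-1}\|_\infty,\|e^{n-2}\|_\infty\le C_3(N^{-\min\{\gamma\alpha,2\}}+h^2)$. Since $r_n\le r^*$, the coefficients satisfy $1+2(1-\theta)r_n\le 1+2r^*$, which is bounded under \textbf{M1}.

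Collecting terms gives the claim with constant $C_2+(1+2r^*)C_3$. This is then renamed $C_3$, following the paper's generic-constant convention, and it remains $\alpha$-robust. The only mildly delicate point is the contractivity step. It requires the exact solution to lie in $[-1,1]$ so that the cut-off does not move $\phi(t_{n-\theta})$; the continuous MBP \eqref{MBP} for the tFAC equation guarantees this. Everything else is a direct substitution.
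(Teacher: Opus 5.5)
Your proposal is correct and follows the paper's route: the corollary is read off from \eqref{Cover:5} by bounding $R_3^n[\phi]$ with Lemma~\ref{lem:extra_err} and $e^{n-1},e^{n-2}$ with Theorem~\ref{thm:Convergence}, treating $n=1$ directly via $e^0=0$, and then renaming the constant. Your ordering of the contractivity step is the rigorous way to reach the last line of \eqref{Cover:5}: you first write $\phi(t_{n-\theta})$ as its own cut-off and compare it with the \emph{untruncated} extrapolation, and only then split through $\widetilde{\phi}(t_{n-\theta})$. The paper's displayed middle term $\|\widetilde{\phi}(t_{n-\theta})-\widehat{\phi}^{n,\theta}\|_\infty$ is not controlled by the errors alone when $\widetilde{\phi}(t_{n-\theta})$ leaves $[-1,1]$.
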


\subsection{Discrete energy stability}
Define the discrete version of the original energy
\begin{equation}\label{def:energy_1}
		E_h[ \phi^n ] := \frac{\varepsilon^2}{2}\left\|\nabla_h \phi^n\right\|^2 + \langle F( \phi^{n} ), 1 \rangle, \quad n \geq 0.
\end{equation}
Next, we demonstrate that the proposed stabilized IMEX-Alikhanov scheme \eqref{sch:L21_n} is energy-stable with respect to the discrete energy \eqref{def:energy_1}. More precisely, we prove that for any time $t=t_n$, the discrete energy is bounded by the initial energy plus a high-order spatiotemporal correction term.

\begin{theorem}\label{thm:energy}
	Assume that the conditions in Theorem \ref{thm:Convergence} hold. If the maximum time stepsize also satisfies
	\begin{equation}\label{Condi:energy}
		\tau \leq \sqrt[\alpha]{ \alpha(1-\theta)^{1-\alpha} / ( 4 \Gamma( 3 - \alpha ) )},
	\end{equation} 
    the stabilized IMEX-Alikhanov scheme \eqref{sch:L21_n}  is
	energy stable in the sense that
	\begin{equation}\label{Conclu:energy}
		E_h[ \phi^n ] \leq E_h[ \phi^0 ] + C_4 N^{1-\alpha} ( N^{-\min\{ \gamma \alpha, 2 \}} + h^2 )^2, \qquad n \geq 1,
	\end{equation}
    where $C_4$ is an $\alpha$-robust positive constant that depends on $C_0 \sim C_3$, $\alpha$ and $\kappa$.
\end{theorem}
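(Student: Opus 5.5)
We propose to test the scheme \eqref{sch:L21_n} against $\nabla_\tau \phi^n$ in the discrete inner product, sum over $n$, and bound each term. First, Lemma \ref{lem:L21_DC_positive}, applied pointwise and summed over the grid, gives
\[
\langle \mbD^\alpha_\tau \phi^n, \nabla_\tau\phi^n\rangle \ge \tfrac12\big(\mG_h[\nabla_\tau\phi^n]-\mG_h[\nabla_\tau\phi^{n-1}]\big) + \tfrac{\alpha a_0^{(n)}}{2-\alpha}\|\nabla_\tau\phi^n\|^2 ,
\]
where $\mG_h$ is the spatial sum of $\mG$ and is nonnegative. For the diffusion term we write $\bar\phi^{n,\theta}=\phi^n-\theta\nabla_\tau\phi^n$ and use summation by parts:
\[
-\varepsilon^2\langle \Delta_h\bar\phi^{n,\theta},\nabla_\tau\phi^n\rangle = \tfrac{\varepsilon^2}{2}\big(\|\nabla_h\phi^n\|^2-\|\nabla_h\phi^{n-1}\|^2\big)+\tfrac{(1-2\theta)\varepsilon^2}{2}\|\nabla_h\nabla_\tau\phi^n\|^2 .
\]
The last term is nonnegative because $\theta=\alpha/2<1/2$.

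For the nonlinear part we use a Taylor expansion. Since $\phi^n,\phi^{n-1}\in\mathbb V_{\rm mbp}$ by Theorem \ref{thm:MBP_L21_n}, we have $F(\phi^n)-F(\phi^{n-1}) = -f(\phi^n)\nabla_\tau\phi^n+\tfrac12F''(\zeta)(\nabla_\tau\phi^n)^2$, so $\langle F(\phi^n)-F(\phi^{n-1}),1\rangle \le -\langle f(\phi^n),\nabla_\tau\phi^n\rangle+\tfrac{\kappa}{2}\|\nabla_\tau\phi^n\|^2$. We then split
\[
f(\widehat\phi^{n,\theta})-\kappa(\bar\phi^{n,\theta}-\widehat\phi^{n,\theta})=f(\phi^n)+\big[f(\widehat\phi^{n,\theta})-f(\phi^n)\big]-\kappa(\bar\phi^{n,\theta}-\widehat\phi^{n,\theta}).
\]
Writing $\phi^n-\widehat\phi^{n,\theta}=\theta\nabla_\tau\phi^n+(\bar\phi^{n,\theta}-\widehat\phi^{n,\theta})$, the bracketed and stabilization terms together are bounded in modulus by $\kappa\theta|\nabla_\tau\phi^n|+2\kappa|\rho^n|$, where $\rho^n:=\bar\phi^{n,\theta}-\widehat\phi^{n,\theta}$. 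This is where the high-order correction enters. By Theorem \ref{thm:Convergence}, Corollary \ref{cor:err} and Lemma \ref{lem:weight_err},
\[
\|\rho^n\|_\infty\le \|\bar\phi^{n,\theta}-\phi(t_{n-\theta})\|_\infty+\|\phi(t_{n-\theta})-\widehat\phi^{n,\theta}\|_\infty \le C(N^{-\min\{\gamma\alpha,2\}}+h^2)=:C\epsilon_N .
\]
Young's inequality then gives $2\kappa\langle|\rho^n|,|\nabla_\tau\phi^n|\rangle\le \delta\,a_0^{(n)}\|\nabla_\tau\phi^n\|^2+\kappa^2\|\rho^n\|^2/(\delta a_0^{(n)})$.

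Collecting these estimates, we obtain
\[
\tfrac12\mG_h[\nabla_\tau\phi^n]+E_h[\phi^n]\le \tfrac12\mG_h[\nabla_\tau\phi^{n-1}]+E_h[\phi^{n-1}]-\Big(\tfrac{\alpha a_0^{(n)}}{2-\alpha}-\kappa(\tfrac12+\theta)-\delta a_0^{(n)}\Big)\|\nabla_\tau\phi^n\|^2+\tfrac{C}{a_0^{(n)}}\epsilon_N^2 .
\]
The step-size conditions are used to make the bracket nonnegative. Explicitly, $a_0^{(n)}=(1-\theta)^{1-\alpha}\tau_n^{-\alpha}/\Gamma(2-\alpha)$, so \eqref{Condi:energy} gives $\tfrac{\alpha a_0^{(n)}}{2-\alpha}\ge 4$ in units of $\Gamma(3-\alpha)$. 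Together with \eqref{Condi:Cover}, which makes $\tau^{-\alpha}$ large relative to $\kappa$, this should dominate $\kappa(\tfrac12+\theta)+\delta a_0^{(n)}$ for a suitable $\delta$ such as $\alpha/(2(2-\alpha))$.

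Summing over $n$ and using $\mG_h\ge0$ and $\mG_h[\nabla_\tau\phi^0]=0$, we get $E_h[\phi^n]\le E_h[\phi^0]+C\epsilon_N^2\sum_{k\le n}1/a_0^{(k)}$. Since $1/a_0^{(k)}\lesssim\tau_k^\alpha/\alpha$ and $\tau_k\le CN^{-1}$, Hölder gives $\sum_k\tau_k^\alpha\le N^{1-\alpha}(\sum_k\tau_k)^\alpha\le T^\alpha N^{1-\alpha}$, which produces the factor $N^{1-\alpha}$ in \eqref{Conclu:energy}.

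The main obstacle is the constant bookkeeping in absorbing the $\kappa$-terms. This matters particularly at $n=1$, where $\widehat\phi^{1,\theta}=\phi^0$ and $\rho^1$ is only $O(\tau_1^\alpha)$; we would instead control it through $R_3^1$, which Lemma \ref{lem:extra_err} still bounds by $N^{-\min\{\gamma\alpha,2\}}$. The other delicate point is keeping the constant $\alpha$-robust, given the factor $1/\alpha$ coming from $a_0^{(n)}$.
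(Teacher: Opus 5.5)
Your proposal follows the paper's skeleton closely. It uses the same test function $\nabla_\tau\phi^n$, Lemma \ref{lem:L21_DC_positive} for the Caputo term, and the identity $((1-\theta)a+\theta b)(a-b)=\frac12a^2-\frac12b^2+\frac{1-2\theta}{2}(a-b)^2$ for the diffusion term. It takes the high-order bound on $\bar\phi^{n,\theta}-\widehat\phi^{n,\theta}$ from Theorem \ref{thm:Convergence}, Corollary \ref{cor:err} and Lemma \ref{lem:weight_err}. Your suggested Young weight $\delta a_0^{(n)}=\frac{\alpha a_0^{(n)}}{4-2\alpha}$ is exactly the paper's.

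There are two real differences. First, for the nonlinear term the paper compares with $f(\bar\phi^{n,\theta})$, using that $\bar\phi^{n,\theta}$ lies between $\phi^{n-1}$ and $\phi^n$. The right-hand side then contains only $\bar\phi^{n,\theta}-\widehat\phi^{n,\theta}$, at the price of the double-well-specific bound $2\|\nabla_\tau\phi^n\|^2$. You Taylor-expand about $\phi^n$ instead. This works for any $C^2$ potential and costs $L(\frac12+\theta)\|\nabla_\tau\phi^n\|^2$ with $L=\|f'\|_{C[-1,1]}$, i.e. $(1+\alpha)\|\nabla_\tau\phi^n\|^2$ for the double well, slightly better than the paper's $2$. (The remainder about $\phi^n$ is $-\frac12F''(\zeta)(\nabla_\tau\phi^n)^2$, with a minus sign, but you only use its absolute value.) Second, your H\"older bound $\sum_{k\le n}\tau_k^\alpha\le n^{1-\alpha}(\sum_{k\le n}\tau_k)^\alpha\le T^\alpha N^{1-\alpha}$ is mesh-independent. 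The paper instead uses the graded-mesh estimate $\tau_k\le T\gamma N^{-\gamma}k^{\gamma-1}$, so yours is cleaner and more general.

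The absorption step does not work as you wrote it. You use $\kappa$ as the Lipschitz constant of $f$ and as the bound on $F''$, then invoke \eqref{Condi:Cover} to dominate $\kappa(\frac12+\theta)$. But \eqref{Condi:Cover} only gives $a_0^{(n)}\ge c\,\kappa$ with $c$ independent of $\alpha$. The coercivity left after Young is $\frac{\alpha a_0^{(n)}}{2(2-\alpha)}$, which carries a factor $\alpha$, so it cannot dominate $\kappa(\frac12+\theta)$ uniformly as $\alpha\to0$. Meanwhile \eqref{Condi:energy} alone only covers $\kappa\le 4/(1+\alpha)$.

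The fix is to use $L=\|f'\|_{C[-1,1]}$ in those two places; $L=2$ for the double well, which is what the constant $4$ in \eqref{Condi:energy} is calibrated to. Then $\kappa$ enters only through the stabilization term $-\kappa\rho^n$, which is high order. Since \eqref{Condi:energy} gives $a_0^{(n)}\ge 4(2-\alpha)/\alpha$, the bracket is at least $2-(1+\alpha)=1-\alpha\ge0$, and \eqref{Condi:Cover} is not needed here at all.

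Your other two concerns are not real obstacles. At $n=1$ one has $\rho^1=-\bar e^{1,\theta}-R_2^1[\phi]+R_3^1[\phi]$ with $|R_3^1[\phi]|\le C(\tau_1+\tau_1^\alpha)\le CN^{-\gamma\alpha}$, exactly as in the paper. The factor $1/\alpha$ coming from $1/\delta$ also appears in the paper's constant and is harmless as $\alpha\to1$, which is the sense of $\alpha$-robustness used here.
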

\begin{proof}
Letting $ n = k $ in \eqref{sch:L21_n} and taking the inner product with $ \nabla_{\tau} \phi^{k}$, we get
\begin{equation}\label{energy:1}
	\begin{aligned}
		& \langle\mbD^{\alpha}_{\tau} \phi^{k}, \nabla_{\tau} \phi^{k} \rangle + \varepsilon^2 \langle \nabla_h \bar{\phi}^{k,\theta}, \nabla_{\tau} \nabla_h \phi^{k} \rangle - \langle f(\bar{\phi}^{k,\theta}), \nabla_{\tau} \phi^{k} \rangle \\
		& \qquad = \langle  f(\widehat{\phi}^{k,\theta}) - f(\bar{\phi}^{k,\theta}), \nabla_{\tau} \phi^{k} \rangle  - \kappa \langle \bar{\phi}^{k,\theta} - \widehat{\phi}^{k,\theta}, \nabla_{\tau} \phi^{k} \rangle.
	\end{aligned}
\end{equation}

Next, we estimate each term in \eqref{energy:1} for the case $k \geq 2$ separately. First, applying Lemma \ref{lem:L21_DC_positive}, the first term on the left-hand side can be bounded below by
\begin{equation}\label{energy:l1}
	\begin{aligned}
		\langle\mbD^{\alpha}_{\tau} \phi^{k}, \nabla_{\tau} \phi^{k} \rangle \geq \frac{1}{2} \langle \mathcal{G}[\nabla_\tau \phi^k], 1 \rangle - \frac{1}{2} \langle \mathcal{G} [\nabla_\tau \phi^{k-1}], 1 \rangle + \frac{ \alpha a_0^{(k)}}{2-\alpha} \| \nabla_\tau \phi^k \|^2,
	\end{aligned}
\end{equation}
where a simple calculation shows that $a_{0}^{(k)} = \frac{1}{\tau_{k}} \int^{t_{k- \theta}}_{t_{k-1}} \omega_{1-\alpha}(t_{k- \theta}-s) ds = \frac{ (1-\theta)^{1-\alpha} }{ \Gamma(2-\alpha) \tau_{k}^{\alpha} } = O(\tau_{k}^{-\alpha})$.

Second, it is straightforward to verify that
$$
\left( (1-\theta) a + \theta b \right) ( a - b ) 
= \frac{1}{2} a^2 - \frac{1}{2} b^2 + \frac{1 - 2\theta}{2} ( a - b )^2 \ge \frac{1}{2} a^2 - \frac{1}{2} b^2,
$$
for $ \theta = \alpha/2 $ with $ \alpha \in (0,1) $. Then, the second term on the left-hand side can be bounded below by
\begin{equation}\label{energy:l2}
		\varepsilon^2 \langle \nabla_h \bar{\phi}^{k,\theta}, \nabla_{\tau} \nabla_{h} \phi^{k} \rangle \geq \frac{\varepsilon^2}{2} \| \nabla_{h} \phi^{k} \|^2 - \frac{\varepsilon^2}{2} \| \nabla_{h} \phi^{k-1} \|^2.
\end{equation}

Third, for any $ a, b \in [-1,1]$ and any $ \xi$ lying between $a$ and  $b$, it holds that
\begin{equation*}\label{energy:l3_1}
	\begin{aligned}
		\frac{1}{4} \big[ ( a^2 - 1 )^2 - ( b^2 - 1 )^2 \big] \leq ( \xi^3 - \xi ) ( a - b ) + 2 ( a - b )^2.
	\end{aligned}
\end{equation*}
As a consequence, the third term on the left-hand side can be bounded below by
\begin{equation}\label{energy:l3_2}
	\begin{aligned}
		- \langle f(\bar{\phi}^{k,\theta}), \nabla_{\tau} \phi^{k} \rangle \geq \langle F( \phi^{k} ) - F( \phi^{k-1} ), 1 \rangle - 2 \| \nabla_{\tau} \phi^{k} \|^2.
	\end{aligned}
\end{equation}

Fourth, utilizing Cauchy-Schwarz inequality and Young's inequality, the right-hand side can be estimated by
\begin{equation*}
	\begin{aligned}
		& \langle  f(\widehat{\phi}^{k,\theta}) - f(\bar{\phi}^{k,\theta}), \nabla_{\tau} \phi^{k} \rangle  - \kappa \langle \bar{\phi}^{k,\theta} - \widehat{\phi}^{k,\theta}, \nabla_{\tau} \phi^{k} \rangle \\
		& \quad \leq 2 \kappa \| \bar{\phi}^{k,\theta} - \widehat{\phi}^{k,\theta} \| \| \nabla_{\tau} \phi^{k} \| \leq C \tau_{k}^{\alpha} \| \bar{\phi}^{k,\theta} - \widehat{\phi}^{k,\theta} \|^{2} + \frac{ \alpha a_{0}^{(k)} }{ 4-2\alpha } \| \nabla_{\tau} \phi^{k} \|^{2},
	\end{aligned}
\end{equation*}
where the facts $ \| \bar{\phi}^{k,\theta} \|_{\infty} \leq 1 $ and $ \| \widehat{\phi}^{k,\theta} \|_{\infty} \leq 1 $ have been used. Moreover, by applying Lemma \ref{lem:weight_err}, Theorem \ref{thm:Convergence} and Corollary \ref{cor:err}, we conclude from \eqref{Cover:22} that $ \| \bar{\phi}^{k,\theta} - \widehat{\phi}^{k,\theta} \| \leq C \| \bar{\phi}^{k,\theta} - \widehat{\phi}^{k,\theta} \|_{\infty} \leq C ( N^{-\min\{ \gamma \alpha, 2 \}} + h^2 ) $. Thus, we further have
\begin{equation}\label{energy:r_1}
	\begin{aligned}
		&\langle  f(\widehat{\phi}^{k,\theta}) - f(\bar{\phi}^{k,\theta}), \nabla_{\tau} \phi^{k} \rangle  - \kappa \langle \bar{\phi}^{k,\theta} - \widehat{\phi}^{k,\theta}, \nabla_{\tau} \phi^{k} \rangle \\
		&\quad \leq C \tau_{k}^{\alpha} ( N^{-\min\{ \gamma \alpha, 2 \}} + h^2 )^2 + \frac{ \alpha a_{0}^{(k)} }{ 4-2\alpha } \| \nabla_{\tau} \phi^{k} \|^{2}.
	\end{aligned}
\end{equation}

Now, we insert \eqref{energy:l1}--\eqref{energy:r_1} into \eqref{energy:1} to obtain
\begin{equation*}
	\begin{aligned}
		& \frac{1}{2} \langle \mathcal{G}[\nabla_\tau \phi^k], 1 \rangle + E_{h} [ \phi^{k} ]  - \frac{1}{2} \langle \mathcal{G} [\nabla_\tau \phi^{k-1}], 1 \rangle - E_{h} [ \phi^{k-1} ]  + \frac{ \alpha a_0^{(k)}}{2-\alpha} \| \nabla_\tau \phi^k \|^2 \\
		& \leq  C \tau_{k}^{\alpha} ( N^{-\min\{ \gamma \alpha, 2 \}} + h^2 )^2 + \left( 2 + \frac{ \alpha a_{0}^{(k)} }{ 4-2\alpha } \right) \| \nabla_{\tau} \phi^{k} \|^{2}.
	\end{aligned}
\end{equation*}
Then, we conclude that if $ \tau_{k} \leq \sqrt[\alpha]{ \alpha(1-\theta)^{1-\alpha} / ( 4 \Gamma( 3 - \alpha ) )} $, i.e., $2 + \frac{ \alpha a_{0}^{(k)} }{ 4-2\alpha } \le \frac{ \alpha a_0^{(k)}}{2-\alpha}$, it follows that
\begin{equation*}
	\begin{aligned}
		\frac{1}{2} \langle \mathcal{G}[\nabla_\tau \phi^k], 1 \rangle + E_{h} [ \phi^{k} ]  - \frac{1}{2} \langle \mathcal{G} [\nabla_\tau \phi^{k-1}], 1 \rangle - E_{h} [ \phi^{k-1} ]  \leq  C \tau_{k}^{\alpha} ( N^{-\min\{ \gamma \alpha, 2 \}} + h^2 )^2, \quad k \geq 2,
	\end{aligned}
\end{equation*}
which further means
\begin{equation}\label{energy:2}
	\begin{aligned}
		\frac{1}{2} \langle \mathcal{G}[\nabla_\tau \phi^n], 1 \rangle + E_{h} [ \phi^{n} ]  - \frac{1}{2} \langle \mathcal{G} [\nabla_\tau \phi^{1}], 1 \rangle - E_{h} [ \phi^{1} ]  \leq  C \sum^{n}_{k=2} \tau_{k}^{\alpha} ( N^{-\min\{ \gamma \alpha, 2 \}} + h^2 )^2.
	\end{aligned}
\end{equation}

In particular, for $ k = 1 $, 
equation \eqref{energy:1} reduces to
\begin{equation*}\label{energy:3}
	\begin{aligned}
		\langle\mbD^{\alpha}_{\tau} \phi^{1}, \nabla_{\tau} \phi^{1} \rangle + \varepsilon^2 \langle \nabla_{h} \bar{\phi}^{1,\theta}, \nabla_{\tau} \nabla_{h} \phi^{1} \rangle - \langle f(\phi^{0}), \nabla_{\tau} \phi^{1} \rangle = \kappa \langle \bar{\phi}^{1,\theta} - \phi^{0}, \nabla_{\tau} \phi^{1} \rangle  ,
	\end{aligned}
\end{equation*}
which can be seen as an easy case compared to the situation with $ k \geq 2 $. Therefore, under condition \eqref{Condi:energy}, we have
\begin{equation}\label{energy:4}
	\begin{aligned}
		\frac{1}{2} \langle \mathcal{G}[\nabla_\tau \phi^1], 1 \rangle + E_{h} [ \phi^{1} ]  - \frac{1}{2} \langle \mathcal{G} [\nabla_\tau \phi^{0}], 1 \rangle - E_{h} [ \phi^{0} ]  \leq  C \tau_{1}^{\alpha} ( N^{-\min\{ \gamma \alpha, 2 \}} + h^2 )^2,
	\end{aligned}
\end{equation}
where the estimate $ \| \bar{\phi}^{1,\theta} - \phi^{0} \| \leq C ( \| \bar{e}^{1,\theta} \|_{\infty} + \| R_{3}^{1}[\phi] \|_{\infty} ) \leq C ( N^{-\min\{ \gamma \alpha, 2 \}} + h^2 ) $ has been used.  Thus, \eqref{energy:4} together with \eqref{energy:2} and the definition of $ \mathcal{G}[w^{n}] $ in Lemma \ref{lem:L21_DC_positive} gives us
\begin{equation}\label{energy:5}
		E_h[ \phi^n ]  \leq E_h[ \phi^0 ] + C \sum^{n}_{k=1} \tau_{k}^{\alpha} ( N^{-\min\{ \gamma \alpha, 2 \}} + h^2 )^2.
\end{equation}
Note that the graded mesh satisfies 
$ \tau_{k}= t_{k}-t_{k-1}\leq T\gamma N^{-\gamma} k^{\gamma-1}$; consequently
$$
\sum^{n}_{k=1} \tau_{k}^{\alpha} \leq C N^{-\gamma \alpha} \sum^{n}_{k=1} k^{ \alpha ( \gamma - 1 ) } \leq C N^{-\gamma \alpha} n^{ 1 + \alpha ( \gamma - 1 ) } \leq C N^{ 1 - \alpha }.
$$
Finally, inserting this estimate into \eqref{energy:5}, the conclusion \eqref{Conclu:energy} can be deduced immediately.
\end{proof}
\begin{remark}
If the mesh grading parameter satisfies $ \gamma \geq 2/\alpha $ and $ N = O( M ) $ (which is a natural choice for second-order schemes), then the higher-order perturbation term appearing in the energy stability estimate is of order $ O( N^{-3-\alpha} ) $.
\end{remark}

\section{Numerical examples}\label{Sec:Numer}
In this section, we present several numerical experiments to evaluate the accuracy and structure-preserving properties of the proposed stabilized IMEX-Alikhanov scheme for the tFAC model \eqref{Model:tAC}. To adequately address the initial weak singularity and maintain accuracy, two types of mixed nonuniform temporal meshes are employed, both featuring a graded mesh near the initial time. Specifically, the time interval $ [0,T] $ is divided into two subintervals, $ [ 0, \widehat{T} ] $ and $ [\widehat{T}, T] $, with a total of $N$ subintervals. The first subinterval $ [ 0, \widehat{T} ] $ is discretized using a graded mesh given by
\begin{equation}\label{mesh:graded}
	t_{k} = \widehat{T} ( k/\widehat{N} )^{\gamma} \  \  \text{and} \  \  \tau_{k} = t_{k} - t_{k-1}, \quad 1 \leq k \leq \widehat{N} < N,
\end{equation}
where $ \gamma \geq 1 $ denotes the mesh grading parameter. For the subsequent interval $ [\widehat{T}, T] $, two distinct temporal discretization strategies are considered:
\begin{itemize}
	\item[(i)] uniform mesh:
	\begin{equation}\label{mesh:uni}
		\tau_{k} = \frac{ T - \widehat{T} }{ N - \widehat{N} }, \quad \widehat{N}+1 \leq k \leq N;
	\end{equation}
	
	\item[(ii)] adaptive mesh:
	\begin{equation}\label{Alg1:adaptive}
		\begin{aligned}
			\tau_{k} = \max \bigg\{ \max \bigg\{ \tau_{\min} , \f{\tau_{\max}}{ \sqrt{ 1 + \eta \vert \p_{\tau} E^{k-1} \vert^{2} } } \bigg\}, r_{\min} \tau_{k-1} \bigg\} 
		\end{aligned}
	\end{equation}
where $ \tau_{\max}$ and $\tau_{\min} $ correspond to the predetermined maximum and minimum time stepsizes, respectively, $ \eta $ is a tunable parameter, and $r_{\min}$ is a constant satisfying $ r_{\min} \geq 4/7 $.
\end{itemize}

In the following tests, the temporal error is measured in the maximum norm, i.e., $ e(N) := \max_{1\leq n \leq N} \| \phi(t_{n}) - \phi^{n} \|_{\infty} $, and the temporal convergence order is computed as
$
	\text{Order} := \frac{\log(e(N)/e(2N))}{\log 2}.
$
Moreover, the stabilization parameter is fixed as $ \kappa = \| f' \|_{C[-1,1]} = 2 $.

\subsection{Temporal accuracy}\label{Ex:1}
In this example, the temporal accuracy of the stabilized IMEX-Alikhanov scheme is verified using the externally forced tFAC model
\begin{equation*}
	\begin{aligned}
		\p_t^{\alpha} \phi =   \varepsilon^2 \Delta \phi + f( \phi )  + g( \mathbf{x}, t ), \quad t >0, \  \mathbf{x} \in \Omega= (0,2\pi)^2, 
	\end{aligned}
\end{equation*}
with $\varepsilon = 0.1$. Here, the external source term  $g( \mathbf{x}, t )$ is chosen such that the exact solution is given by $ \phi(\mathbf{x},t) = \omega_{1+\alpha}(t)\sin(x)\sin(y) $. It is straightforward to verify that this solution exhibits a weak initial singularity, as characterized in \eqref{Assum:regu}. 

\begin{table}[!ht]
	\caption{{Time accuracy of the stabilized scheme \eqref{sch:L21_n} with different fractional orders $\alpha = 0.3, 0.5, 0.8$ and $ \gamma = 2/\alpha $} \label{Ex5:tab1}}%
	{\footnotesize\begin{tabular*}{\columnwidth}{@{\extracolsep\fill}cccccccc@{\extracolsep\fill}}
			\toprule
			\multicolumn{1}{c}{\multirow{2}{*}{$ N $}} & \multicolumn{2}{c}{$\alpha=0.3$} & \multicolumn{2}{c}{$\alpha=0.5$} & \multicolumn{2}{c}{$\alpha=0.8$} \\
			\cmidrule{2-3} 
			\cmidrule{4-5}
			\cmidrule{6-7}
			& $ e(N) $ & Order & $ e(N) $ & Order & $ e(N) $ & Order \\
			\midrule
		    40     & $4.39 \times 10^{-2}$   &  ---   &  $1.13 \times 10^{-2}$ & ---   & $1.37 \times 10^{-3}$   &  ---   \\
			80     & $1.03 \times 10^{-2}$   &  2.10   &  $2.71 \times 10^{-3}$ & 2.06   & $3.22 \times 10^{-4}$   &  2.09   \\
			160    & $2.51 \times 10^{-3}$   &  2.03   &  $6.65 \times 10^{-4}$ & 2.02   & $8.04 \times 10^{-5}$   &  2.00   \\
			320    & $5.96 \times 10^{-4}$   &  2.07   &  $1.65 \times 10^{-4}$ & 2.01   & $2.01 \times 10^{-5}$   &  2.00   \\
			\midrule 
			\multicolumn{1}{c}{$ \min\{ 2, \gamma \alpha \} $ }   
			&       & 2.00       &    & 2.00 &   & 2.00 \\
			\bottomrule
	\end{tabular*}}
\end{table}

\begin{table}[!ht]
	\caption{{Time accuracy of the stabilized scheme \eqref{sch:L21_n} with different fractional orders $\alpha = 0.99, 0.999, 0.9999$ and $ \gamma = 2/\alpha $} \label{Ex5:tab2}}%
	{\footnotesize\begin{tabular*}{\columnwidth}{@{\extracolsep\fill}cccccccc@{\extracolsep\fill}}
			\toprule
			\multicolumn{1}{c}{\multirow{2}{*}{$ N $}} & \multicolumn{2}{c}{$\alpha=0.99$} & \multicolumn{2}{c}{$\alpha=0.999$} & \multicolumn{2}{c}{$\alpha=0.9999$} \\
			\cmidrule{2-3} 
			\cmidrule{4-5}
			\cmidrule{6-7}
			& $ e(N) $ & Order & $ e(N) $ & Order & $ e(N) $ & Order \\
			\midrule
		    40     & $3.08 \times 10^{-5}$   &  ---   &  $1.88 \times 10^{-5}$ & ---   & $1.77\times 10^{-5}$   &  ---   \\
			80     & $6.95 \times 10^{-6}$   &  1.96   &  $4.74 \times 10^{-6}$ & 1.99   & $4.55 \times 10^{-6}$   &  2.09   \\
			160    & $1.54 \times 10^{-6}$   &  1.99   &  $1.17 \times 10^{-6}$ & 2.02   & $1.14 \times 10^{-6}$   &  2.00   \\
			320    & $3.50 \times 10^{-7}$   &  2.00   &  $2.87 \times 10^{-7}$ & 2.02   & $2.86 \times 10^{-7}$   &  2.00   \\
			\midrule 
			\multicolumn{1}{c}{$ \min\{ 2, \gamma \alpha \} $ }   
			&       & 2.00       &    & 2.00 &   & 2.00 \\
			\bottomrule
	\end{tabular*}}
\end{table}

For $ \gamma \geq 1 $,  the graded mesh \eqref{mesh:graded} is employed on $[0, \widehat{T}]$ with $ \widehat{T} = \min\{ 1/\gamma, T \} $ and $ \widehat{N} = \lceil \frac{ N }{ T + 1 - \gamma^{-1} }  \rceil $. On the remaining interval $ [\widehat{T}, T] $, the uniform temporal mesh \eqref{mesh:uni} is applied, with the final time set to $T=0.5$. The errors and convergence orders of the proposed stabilized IMEX-Alikhanov scheme are tested by setting $ N = M $ and choosing different fractional orders $\alpha = 0.3, 0.5,0.8$. The numerical results, listed in Table \ref{Ex5:tab1}, demonstrate that the proposed method achieves second-order temporal accuracy, which is consistent with the theoretical analysis. Moreover, we test the $ \alpha $-robustness by computing the errors for $ \alpha = 0.99, 0.999 $, and $ 0.9999 $ in Table \ref{Ex5:tab2}. The results indicate that the proposed scheme still performs very well as $\alpha \rightarrow 1^{-} $, confirming that it is indeed $\alpha$-robust.

\subsection{MBP preservation and energy stability}
In this example, we verify the MBP and energy stability of the proposed scheme \eqref{sch:L21_n}. The computational domain is set to $\Omega = (0,1)^2$, with fractional order $\alpha = 0.9$ and the interfacial coefficient $\varepsilon = 0.01$. In all subsequent numerical tests, a uniform spatial mesh of $M = 128$ grid points in each spatial direction is employed. The initial phase-field variable is prescribed as $\phi_{\text{init}}(x,y) = 0.9\sin(5\pi x)\sin(5\pi y)$. 

\begin{figure}[!htbp]
	\vspace{-12pt}
	\centering
	\subfigure[maximum-norm of $\phi$]
	{
		\includegraphics[width=0.45\textwidth]{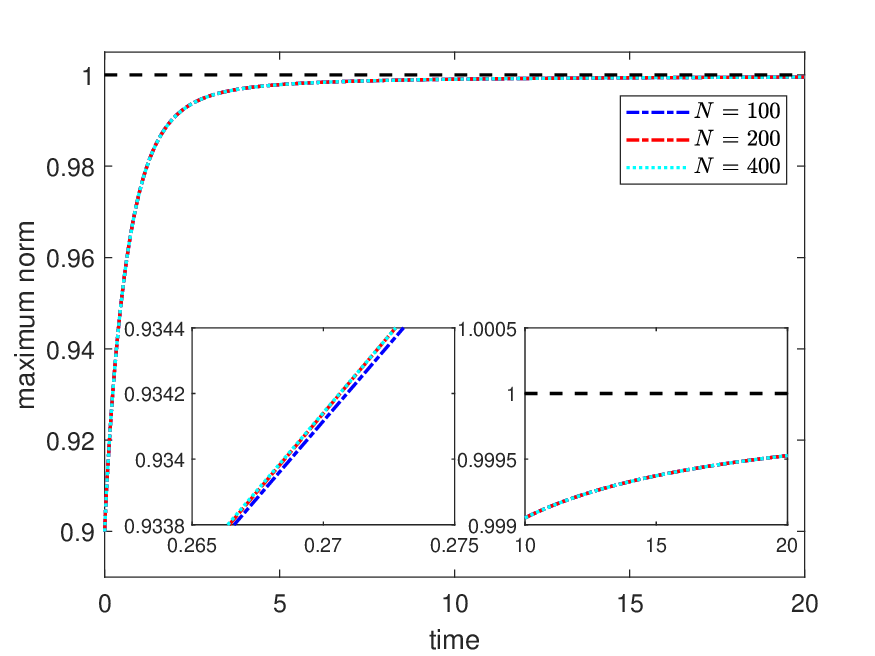}		\label{figEx2_3a}
	}%
	\subfigure[energy]
	{
		\includegraphics[width=0.45\textwidth]{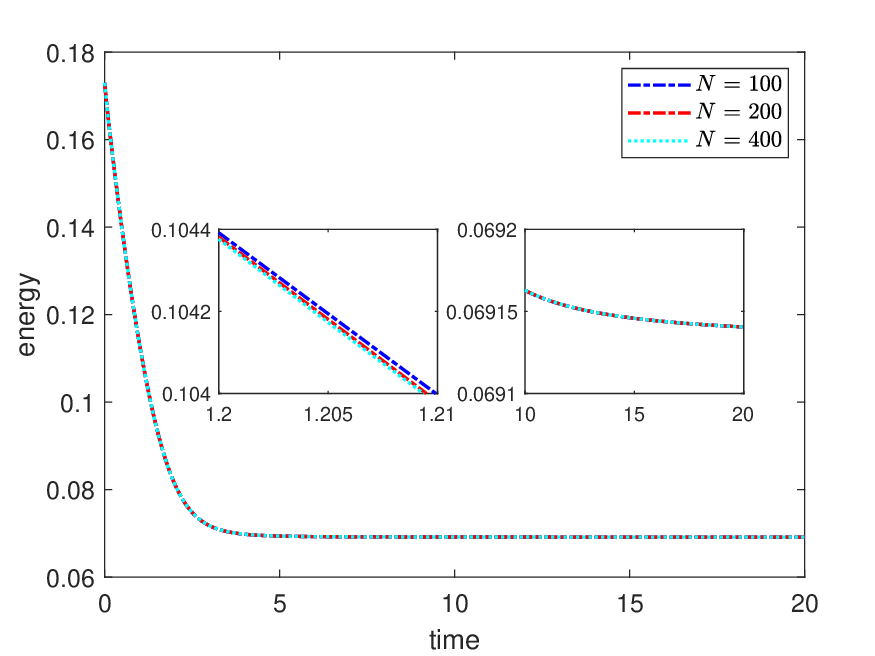}	\label{figEx2_3b}
	}%
	\setlength{\abovecaptionskip}{0.0cm} 
	\setlength{\belowcaptionskip}{0.0cm}
	\caption{Time evolution of the maximum-norm of $\phi$ and original energy with different time steps}	
	\label{figEx2_3}
\end{figure}
Fig.~\ref{figEx2_3} presents the time evolution of the maximum norm and the original energy computed by the stabilized IMEX-Alikhanov scheme up to $T=20$ using different time steps ($N=100$, $200$, and $400$). In this test, we employ mixed nonuniform temporal meshes as described in Example~\ref{Ex:1}. The results indicate that the proposed method preserves both the MBP and energy stability for all cases, demonstrating its reliable structure-preserving performance.

\subsection{Application of adaptive time-stepping strategy}
The dynamic evolution governed by the tFAC model \eqref{Model:tAC} typically requires a long time to reach a steady state and involves multiple time scales, which motivates the use of adaptive time-stepping methods to improve computational efficiency while maintaining numerical accuracy. In this example, a graded mesh with $\widehat{T} = 0.5$, $ \widehat{N} = 30 $, and $ \gamma = 2/\alpha $ is employed on the initial interval $ [0,\widehat{T}] $, and the remainder $ [\widehat{T}, T] $ is discretized using three different temporal strategies: a uniform large time stepsize $\tau=1$, a uniform small time stepsize $\tau=0.01$, and an adaptive time-stepping strategy based on \eqref{Alg1:adaptive}, with parameters $ \tau_{\max} = 1, \tau_{\min} = 0.01$, and $\eta = 10^{7}$. The phase-field variable is initialized with uniformly distributed random values in the interval
$[-0.9,0.9]$. The simulation parameters are set as $\alpha = 0.9$ and $\varepsilon = 0.01$, and the spatial discretization is performed on a uniform grid with $M = 128$ points in each direction over the domain $\Omega = (0,1)^2$.

Fig. \ref{figEx3_1} illustrates snapshots of the numerical solution $\phi$ at four distinct time instants. It can be seen that employing a large time stepsize $\tau=1$ compromises the accuracy of the solution $\phi$, whereas the adaptive time-stepping strategy yields results comparable to those obtained with the small uniform time stepsize $\tau=0.01$. In addition, the evolution of the maximum-norm of $\phi$ and the discrete energy is presented in Fig. \ref{figEx3_2a}--\ref{figEx3_2b}. These results confirm that all three temporal discretization strategies can preserve the MBP and energy stability. Moreover, Fig. \ref{figEx3_2c} depicts the adaptive time stepsizes with respect to time, showing that small time stepsizes are employed only during periods of rapid energy decay. Table \ref{tab1_Ex33} reports the CPU times and the total number of time steps required by the stabilized IMEX-Alikhanov scheme under different temporal discretization strategies. The results demonstrate the high computational efficiency of the adaptive time-stepping strategy. For example, the simulation using a uniform small time stepsize takes approximately one hour to complete, whereas the adaptive strategy finishes in only $37$ seconds. In fact, the adaptive method requires only $695$ time steps, which represents a substantial reduction compared with the uniform fine time-stepping approach (see Table \ref{tab1_Ex33}).
\begin{figure}[!htbp]
	\vspace{-12pt}
	\centering
	\subfigure[$t=5$]
	{
		\begin{minipage}[t]{0.24\linewidth}
			\centering
			\includegraphics[width=1.43in]{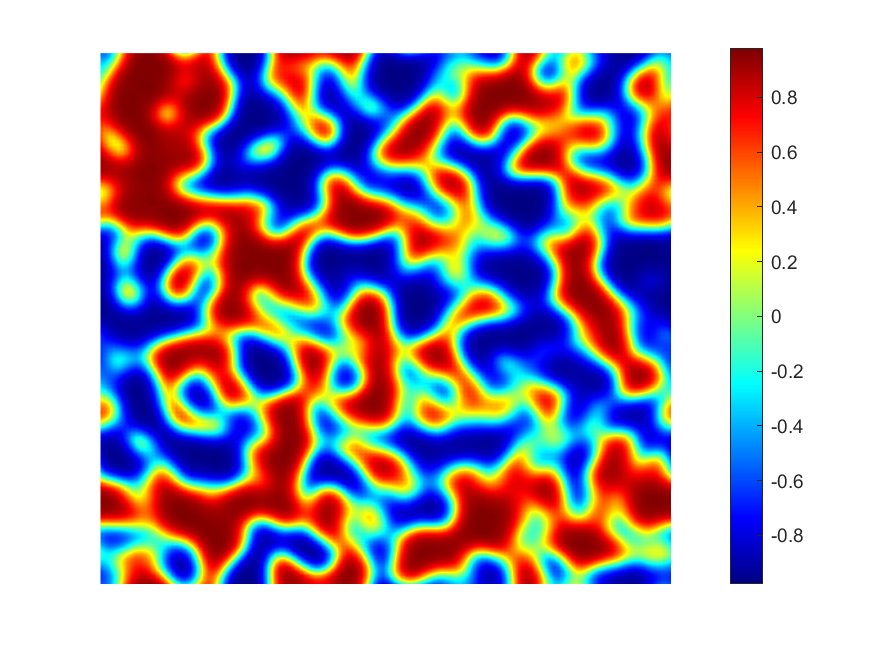}
			\includegraphics[width=1.43in]{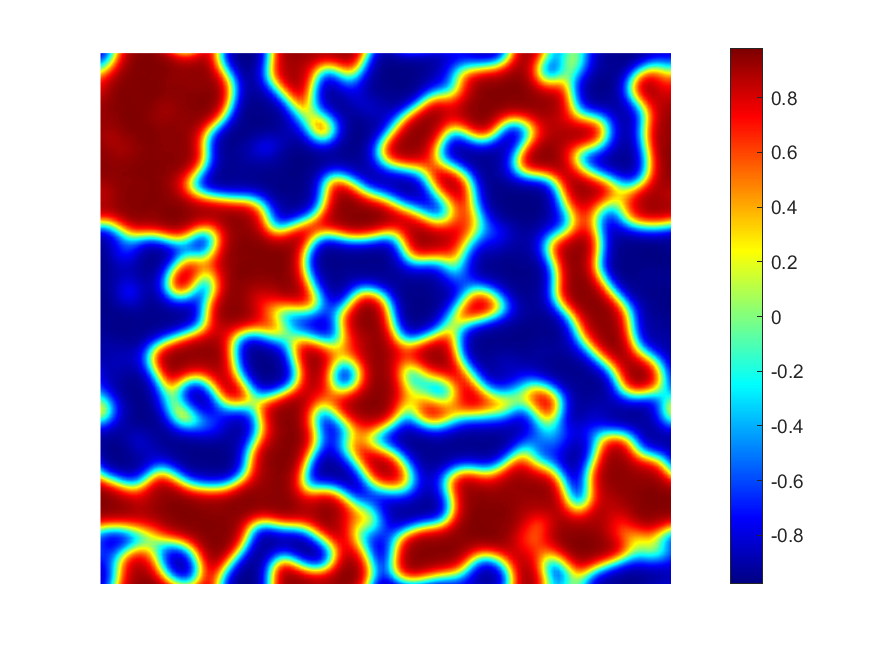}
			\includegraphics[width=1.43in]{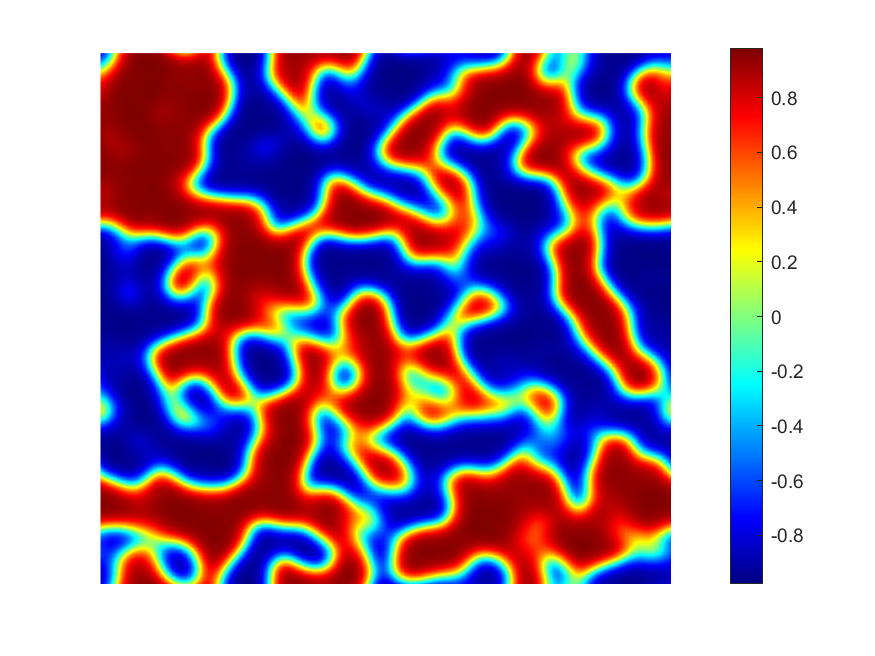}
		\end{minipage}%
	}%
	\subfigure[$t=10$]
	{
		\begin{minipage}[t]{0.24\linewidth}
			\centering
			\includegraphics[width=1.43in]{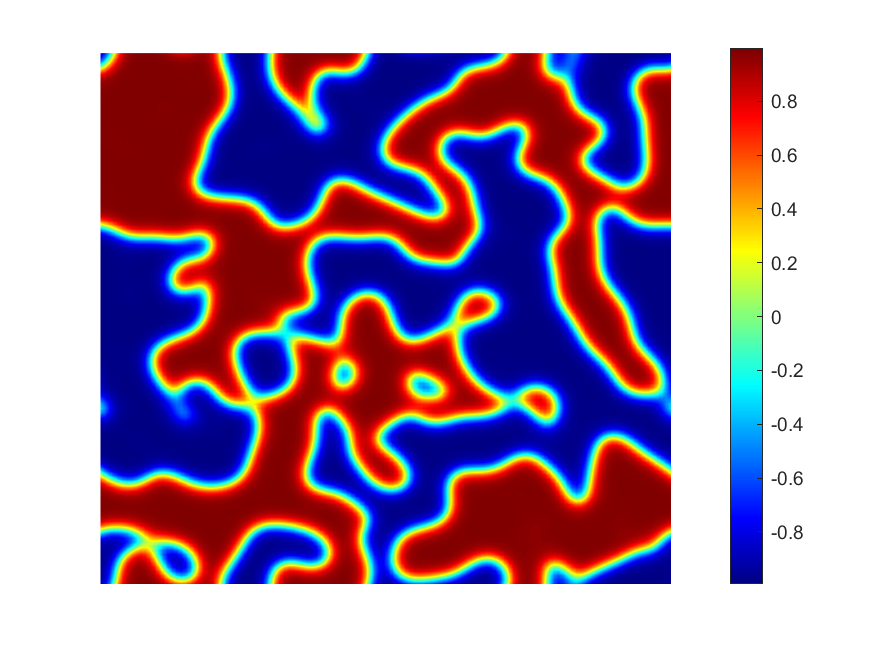}
			\includegraphics[width=1.43in]{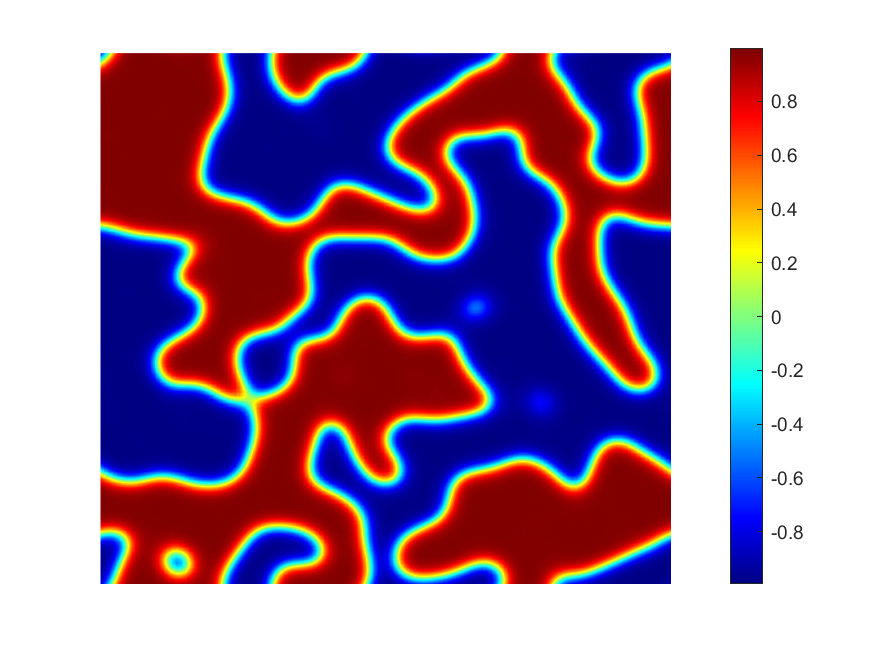}
			\includegraphics[width=1.43in]{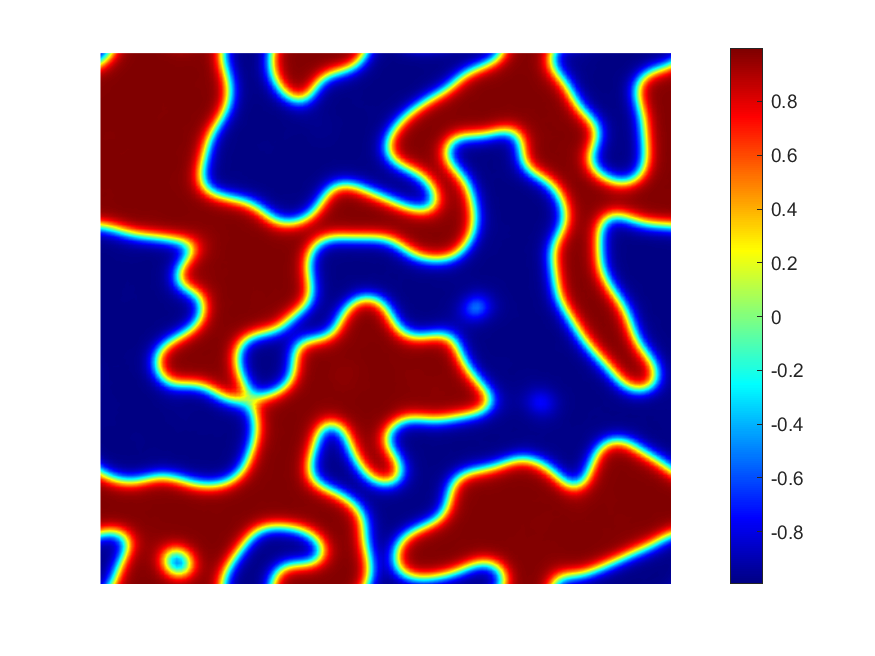}
		\end{minipage}%
	}%
	\subfigure[$t=20$]{
		\begin{minipage}[t]{0.24\linewidth}
			\centering
			\includegraphics[width=1.43in]{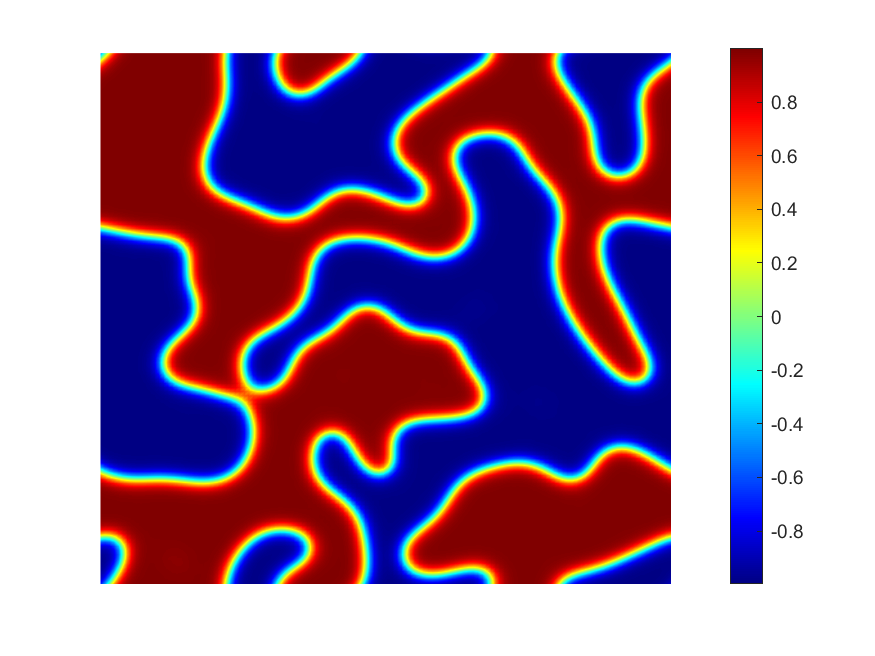}
			\includegraphics[width=1.43in]{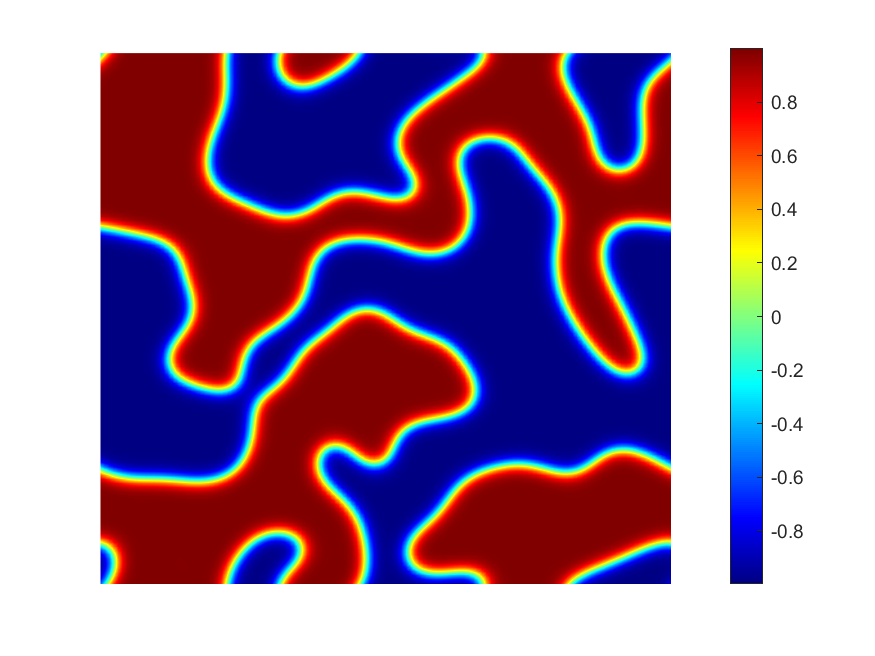}
			\includegraphics[width=1.43in]{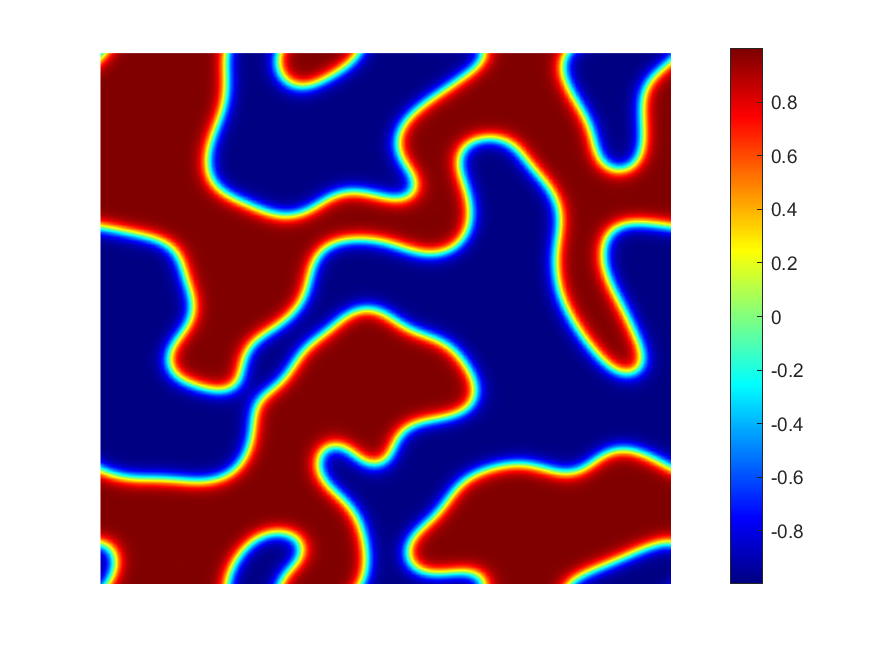}
		\end{minipage}%
	}%
	\subfigure[$t=100$]
	{
		\begin{minipage}[t]{0.24\linewidth}
			\centering
			\includegraphics[width=1.43in]{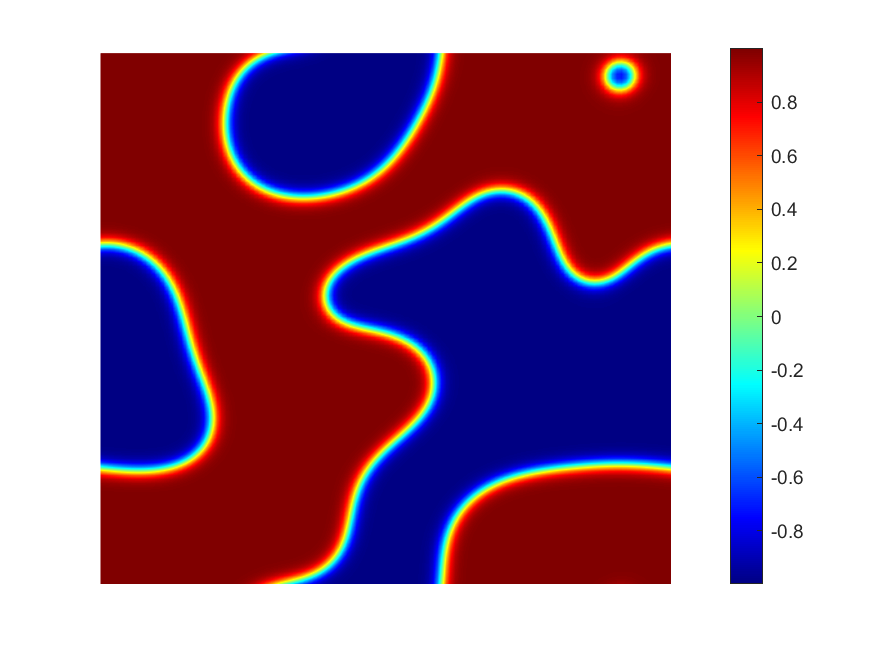}
			\includegraphics[width=1.43in]{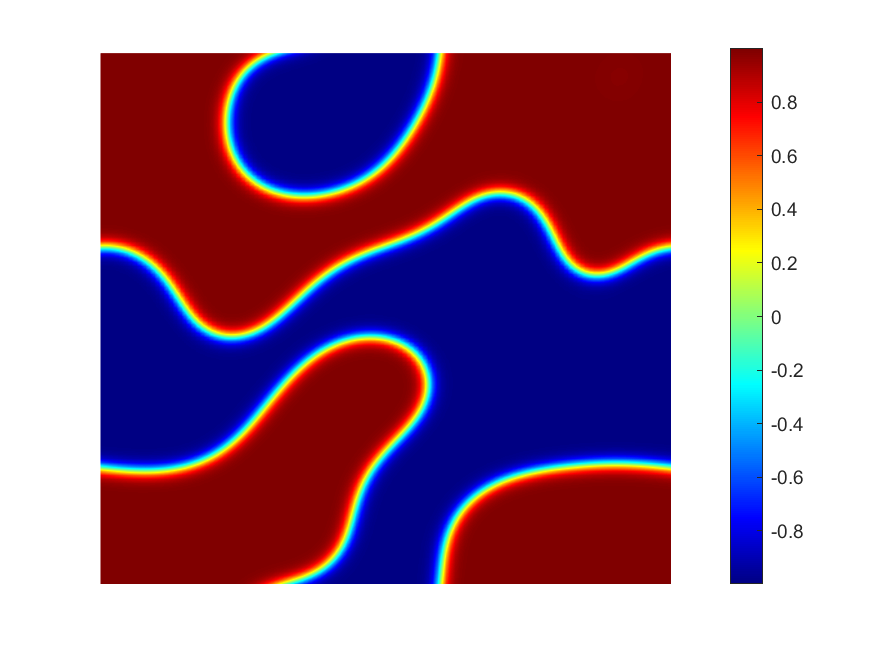}
			\includegraphics[width=1.43in]{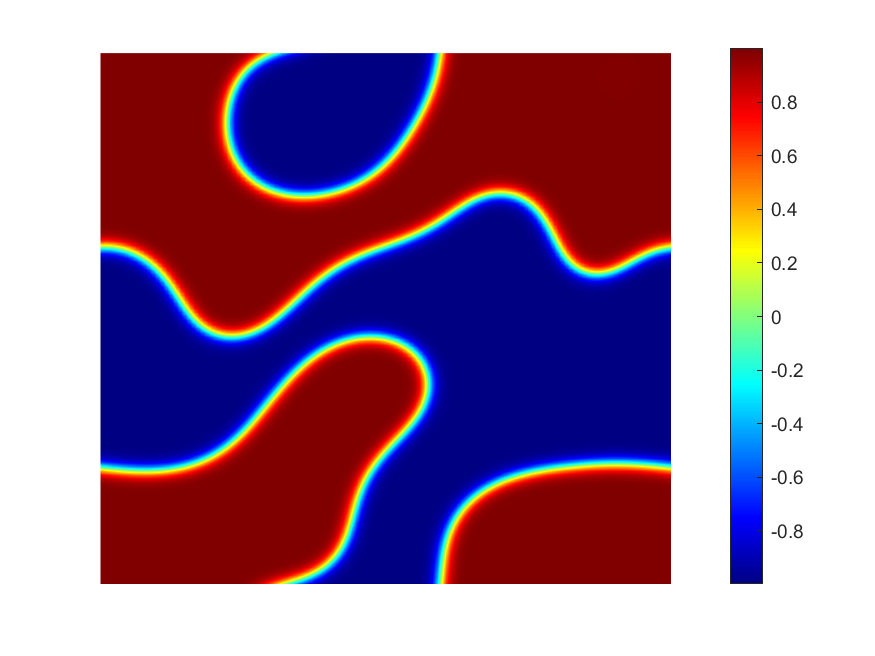}
		\end{minipage}%
	}%
	\setlength{\abovecaptionskip}{0.0cm} 
	\setlength{\belowcaptionskip}{0.0cm}
	\caption{The dynamic snapshots of the numerical solution $\phi$ computed using the uniform  (top, $\tau = 1$; bottom, $\tau = 0.01$) and adaptive (middle) time stepsizes}	\label{figEx3_1}
\end{figure}

\begin{figure}[!htbp]
	\vspace{-10pt}
	\centering
	\subfigure[maximum-norm of $\phi$]
	{
		\includegraphics[width=0.325\textwidth]{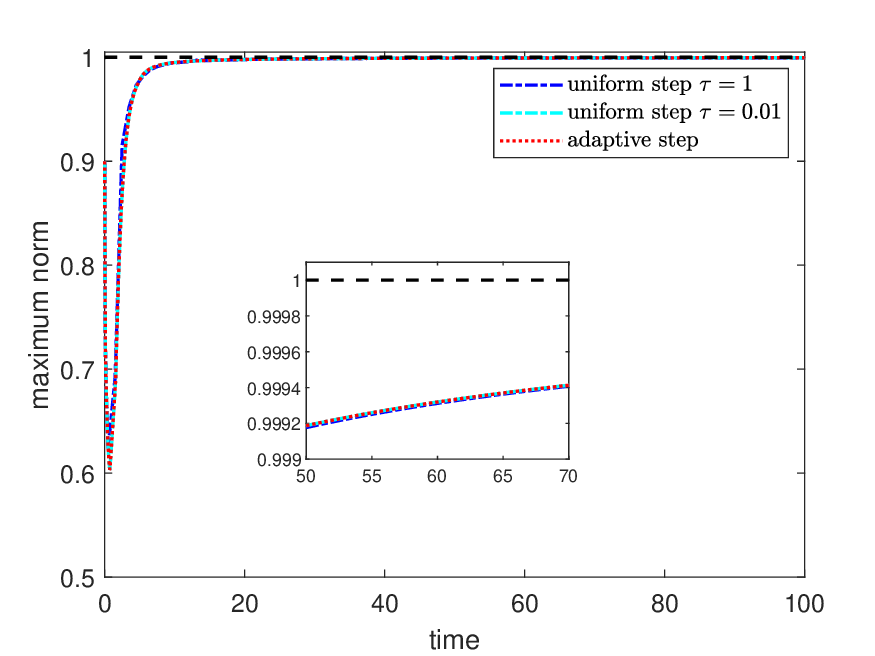}
		\label{figEx3_2a}
	}%
	\subfigure[energy]
	{
		\includegraphics[width=0.325\textwidth]{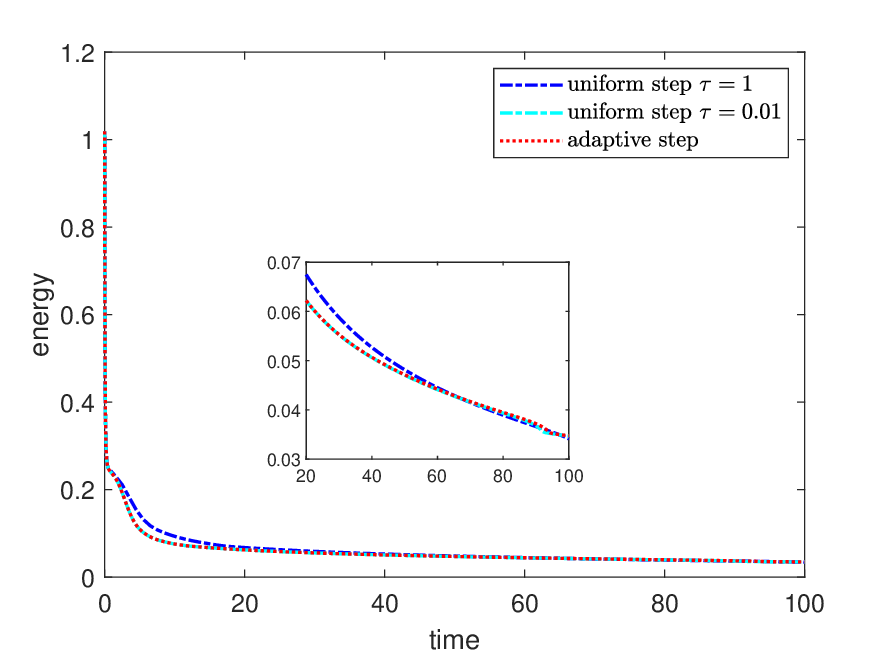}
		\label{figEx3_2b}
	}%
	\subfigure[time stepsizes]
	{
		\includegraphics[width=0.325\textwidth]{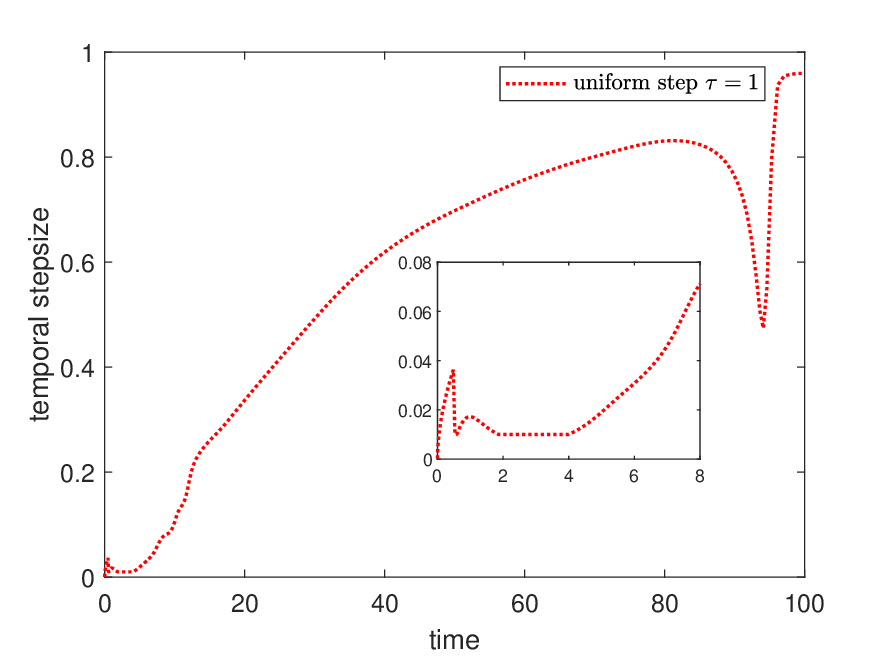}
		\label{figEx3_2c}
	}%
	\setlength{\abovecaptionskip}{0.0cm} 
	\setlength{\belowcaptionskip}{0.0cm}
	\caption{Time evolution of the maximum-norm  (left), energy (middle), and time stepsizes (right)}
\end{figure}

\begin{table}[!htbp]
	\vspace{-12pt}
	\caption{CPU times and the total number of time steps yielded by the scheme}%
	\label{tab1_Ex33}
	{\footnotesize\begin{tabular*}{\columnwidth}{@{\extracolsep\fill}cccc@{\extracolsep\fill}}
			\toprule
			time-stepping strategy  & uniform step  $ \tau = 1$ & adaptive step & uniform step  $ \tau = 0.01 $ \\
			\midrule 
			 $N$        &   131   &  695   &  9981 \\ 
			CPU times   &  5 s  &  37 s  &  57 m  \\ 
			\bottomrule
	\end{tabular*}}
	\vspace{-8pt}
\end{table}

\subsection{Dynamical behavior governed by the tFAC model}
It is well known that the fraction order $\alpha$ plays a crucial role in governing the evolution dynamics of the system. Next, we consider an example involving the merging of four-drops to investigate the influence of $\alpha$ on the evolution process. To this end, we consider the tFAC model \eqref{Model:tAC} with $\varepsilon = 0.02$, and the following initial condition:
\begin{align*}
	\phi_{\text{init}} = -0.9\tanh \frac{ (x-0.3)^2+y^2-0.2^2}{\varepsilon} \tanh \frac{(x+0.3)^2+y^2-0.2^2}{\varepsilon}\\
	\times\tanh \frac{x^2+(y-0.3)^2-0.2^2} {\varepsilon}\tanh \frac{x^2+(y+0.3)^2-0.2^2}{\varepsilon}.
\end{align*}

\begin{figure}[!htbp]
	\vspace{-12pt}
	\centering
	\subfigure[$t=1$]
	{
		\begin{minipage}[t]{0.24\linewidth}
			\centering
			\includegraphics[width=1.43in]{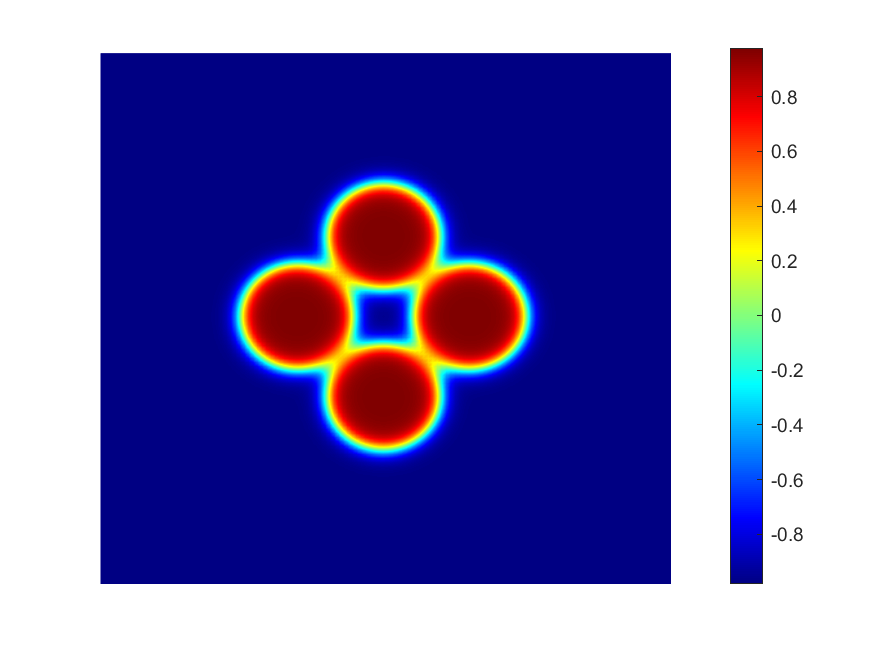}
			\includegraphics[width=1.43in]{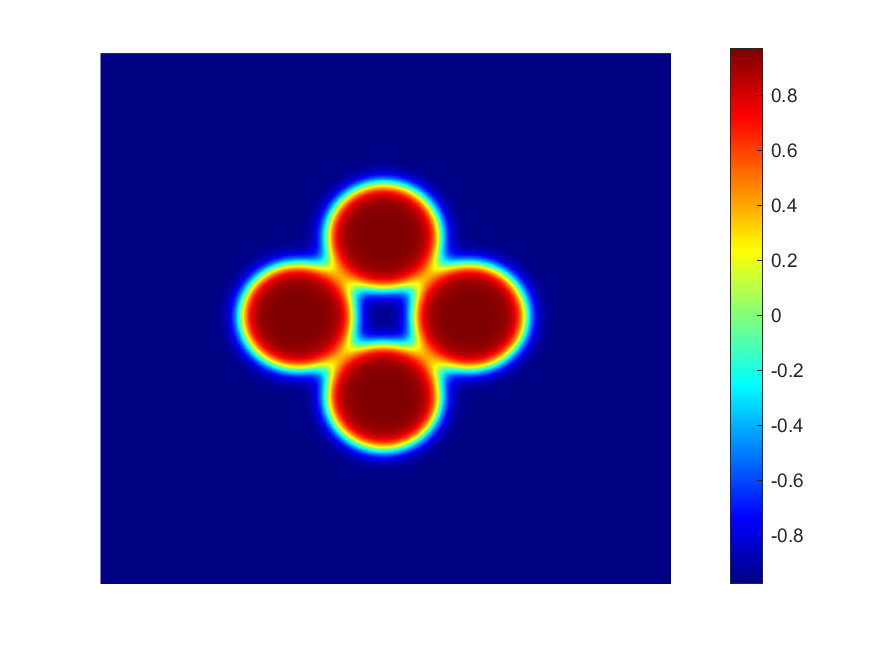}
			\includegraphics[width=1.43in]{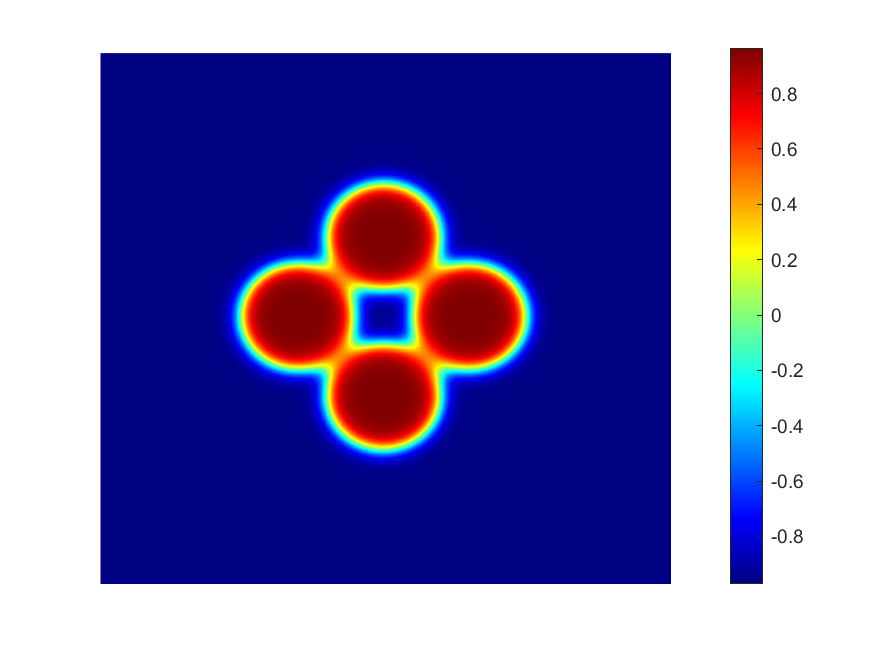}
		\end{minipage}%
	}%
	\subfigure[$t=10$]
	{
		\begin{minipage}[t]{0.24\linewidth}
			\centering
			\includegraphics[width=1.43in]{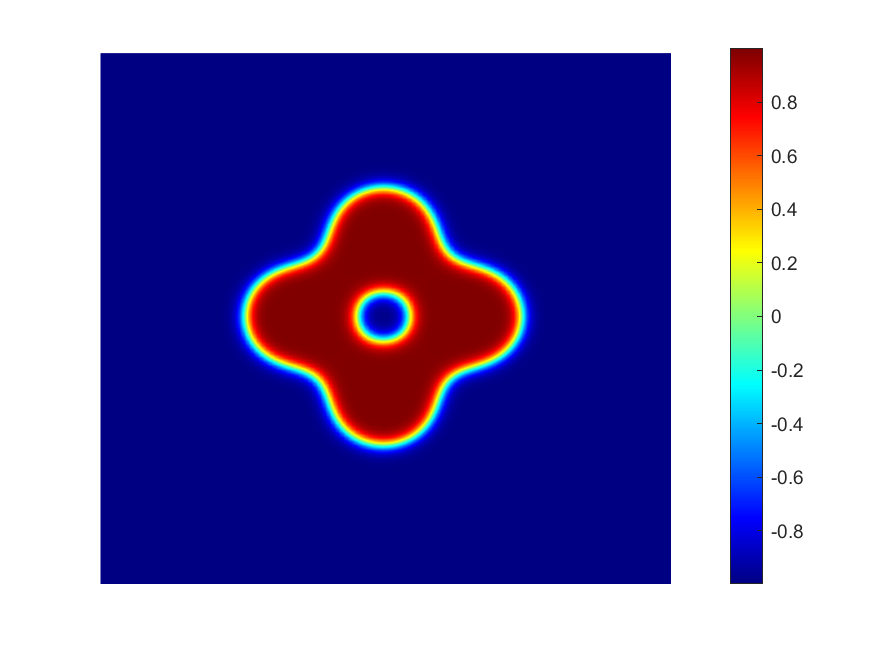}
			\includegraphics[width=1.43in]{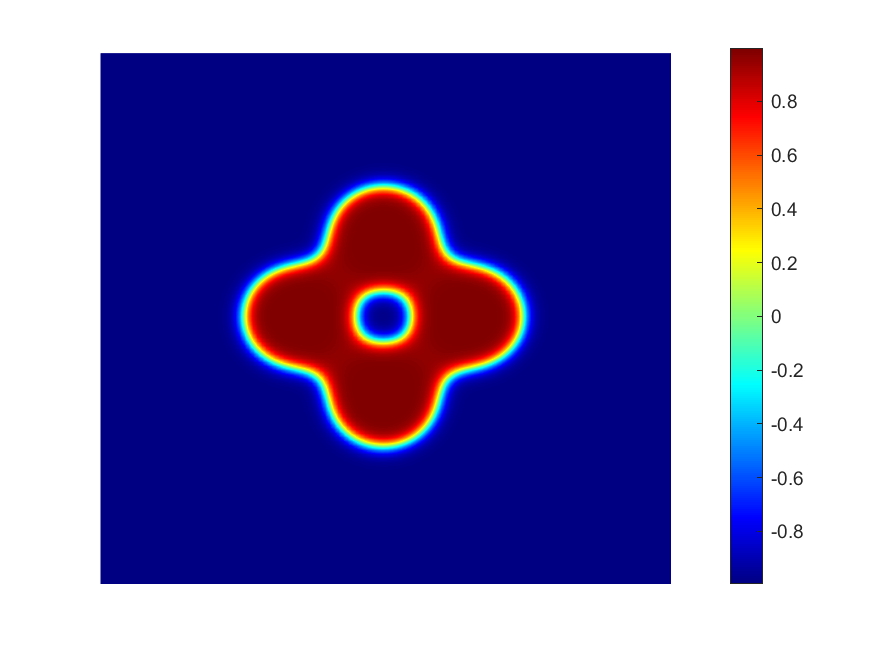}
			\includegraphics[width=1.43in]{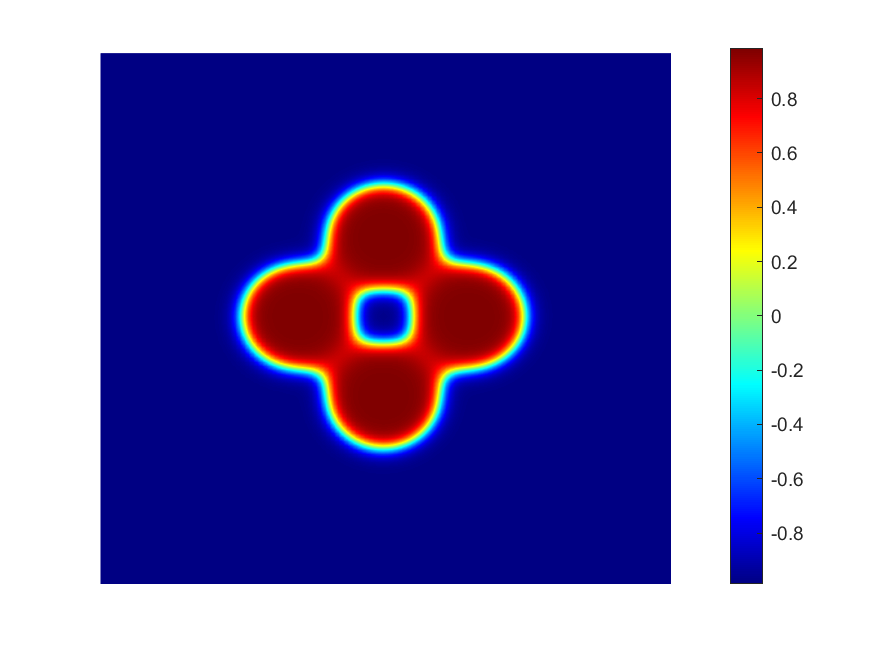}
		\end{minipage}%
	}%
	\subfigure[$t=50$]{
		\begin{minipage}[t]{0.24\linewidth}
			\centering
			\includegraphics[width=1.43in]{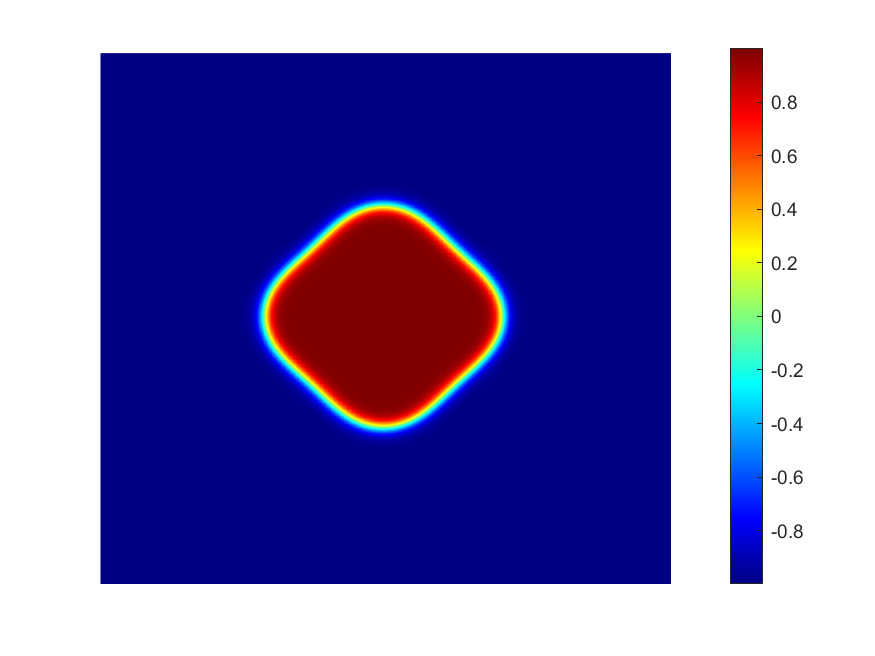}
			\includegraphics[width=1.43in]{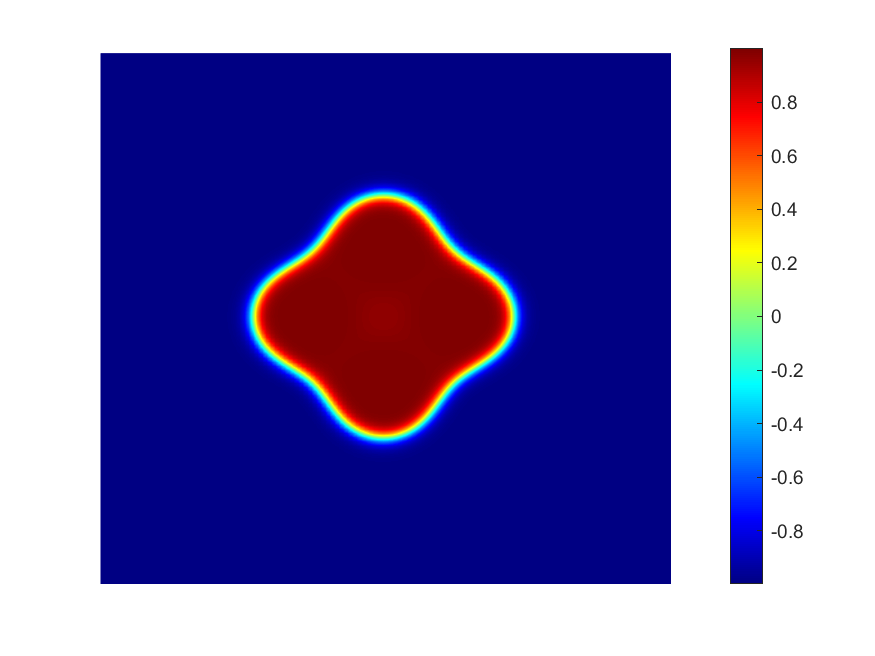}
			\includegraphics[width=1.43in]{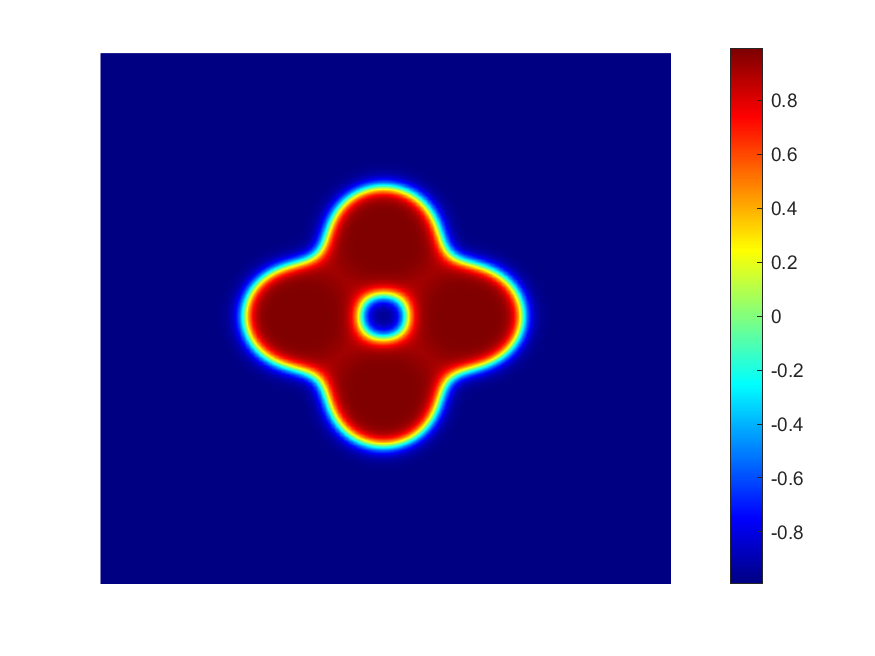}
		\end{minipage}%
	}%
	\subfigure[$t=100$]
	{
		\begin{minipage}[t]{0.24\linewidth}
			\centering
			\includegraphics[width=1.43in]{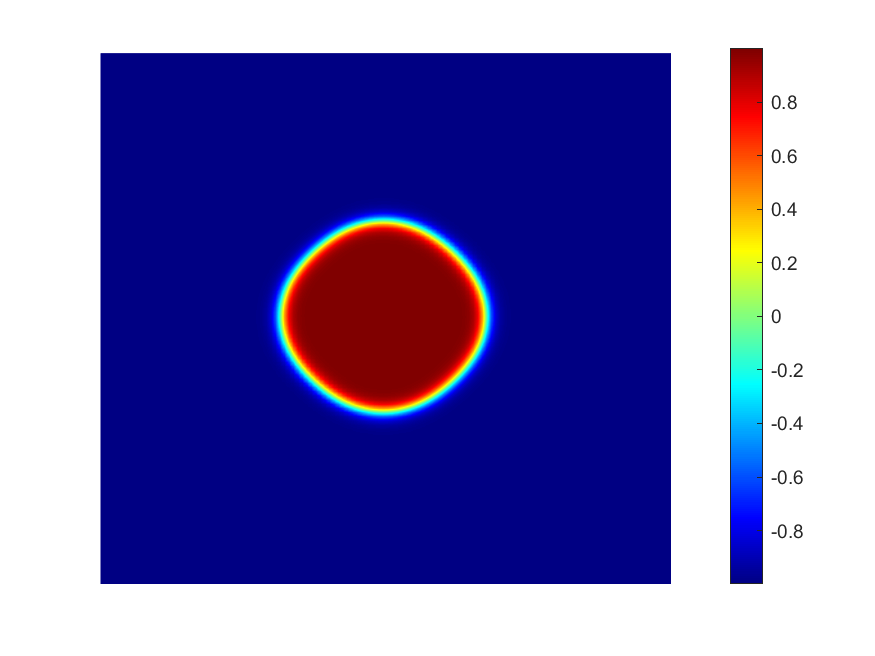}
			\includegraphics[width=1.43in]{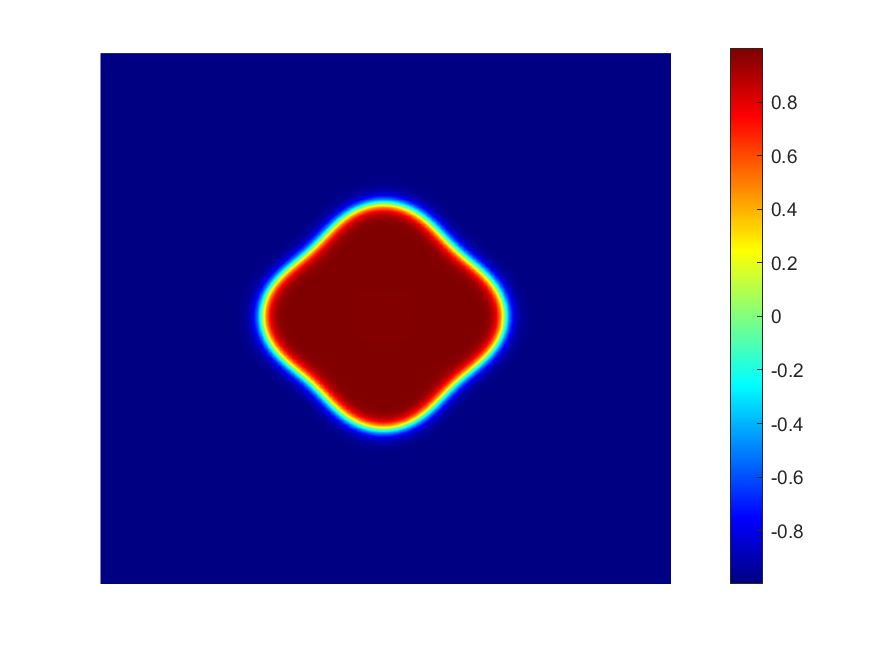}
			\includegraphics[width=1.43in]{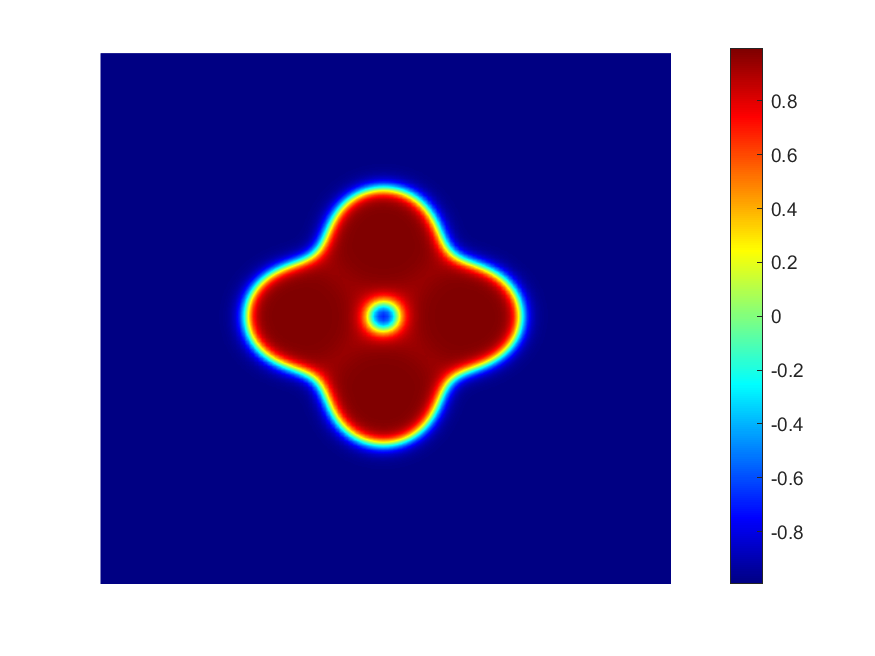}
		\end{minipage}%
	}%
	\setlength{\abovecaptionskip}{0.0cm} 
	\setlength{\belowcaptionskip}{0.0cm}
	\caption{The dynamic snapshots of the numerical solution $\phi$ computed with $\alpha=0.9,0.7,0.4$ (from top to bottom, respectively)}	\label{figEx4_1}
\end{figure}
In this example, simulations are performed using the proposed adaptive IMEX-Alikhanov scheme with parameters $ \tau_{\max} = 1 $, $ \tau_{\min} = 0.01 $, and $ \eta = 10^{7} $. The computational domain $\Omega = (0,1)^2$ is uniformly discretized into $M = 128$ grid points in each spatial direction. Fig. \ref{figEx4_1} illustrates the time evolution of the phase-field variable $\phi$ for different fractional orders $\alpha = 0.9, 0.7$ and $0.4$ at selected time instants. As observed, the initially separated four bubbles gradually coalesce into a single bubble. Moreover, the evolution dynamics is markedly influenced by the fractional order: larger values of $\alpha$ lead to faster evolution processes. This trend can be further confirmed by the maximum norm and energy curves presented in Fig. \ref{figEx4_2a}--\ref{figEx4_2b}. As $\alpha$ decreases, the energy decays more slowly, and it takes a much longer time for $ \phi $ to reach $\pm 1$, which typically indicates the formation of a pure-phase state. In addition, the adaptive time stepsizes are displayed in Fig. \ref{figEx4_2c}. It can be observed that the time stepsize is adjusted according to the energy variation: smaller time stepsizes are employed during periods of rapid energy decay to maintain accuracy, while larger stepsizes are adopted when the energy decreases smoothly. This adaptive strategy significantly enhances the computational efficiency of the long-term simulations.
\begin{figure}[!htbp]
	\vspace{-10pt}
	\centering
	\subfigure[maximum-norm of $\phi$]
	{
		\includegraphics[width=0.325\textwidth]{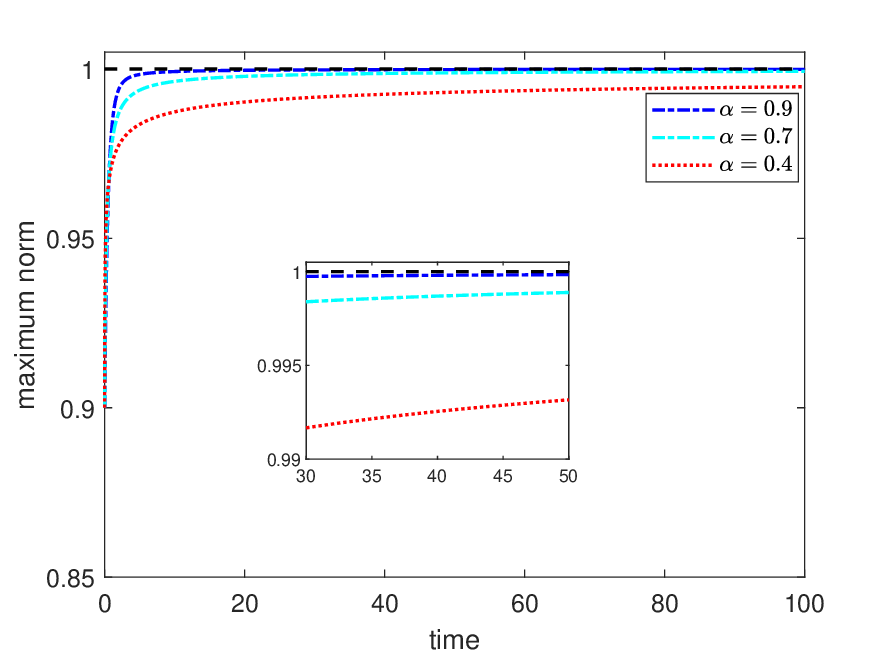}
		\label{figEx4_2a}
	}%
	\subfigure[energy]
	{
		\includegraphics[width=0.325\textwidth]{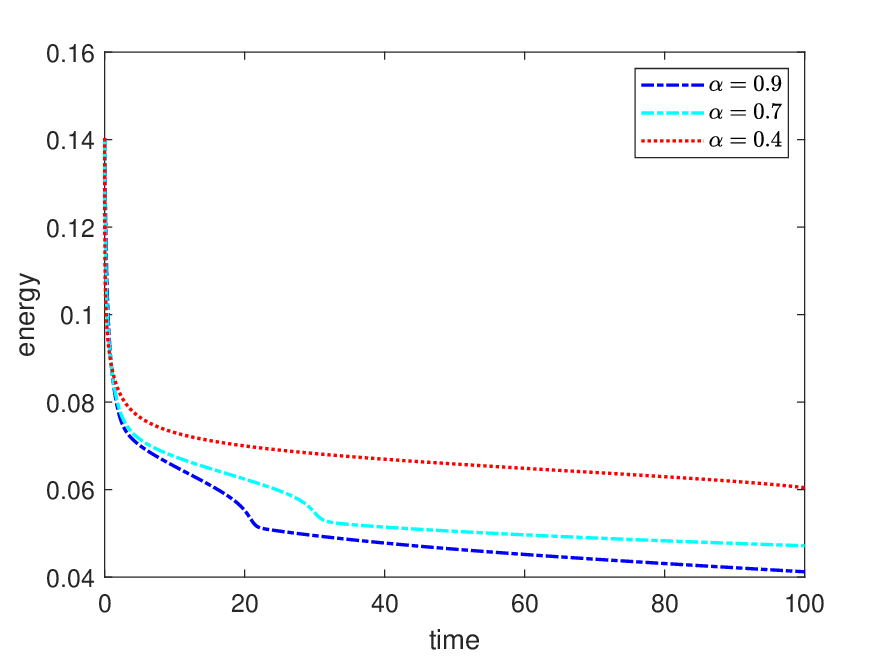}
		\label{figEx4_2b}
	}%
	\subfigure[time stepsizes]
	{
		\includegraphics[width=0.325\textwidth]{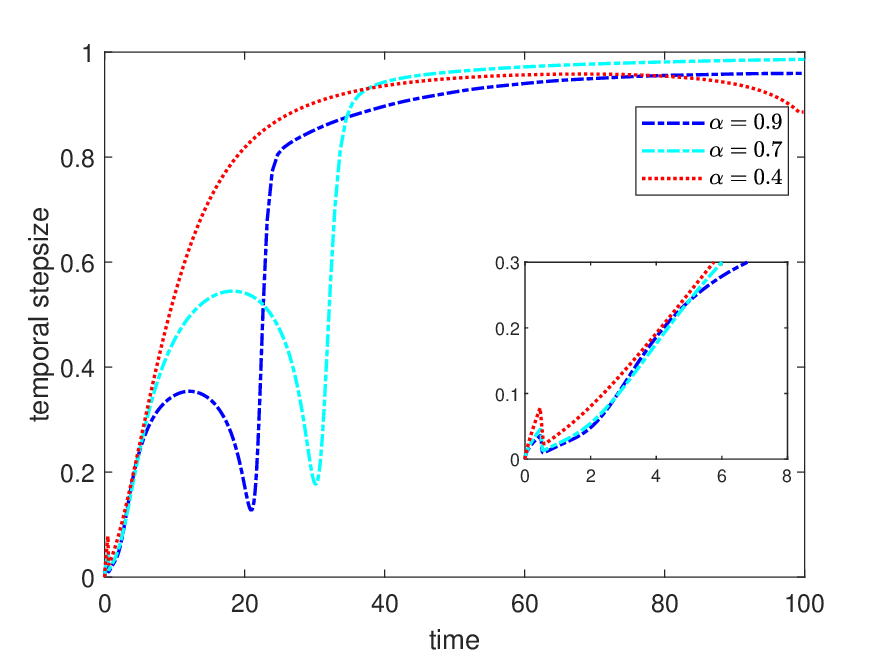}
		\label{figEx4_2c}
	}%
	\setlength{\abovecaptionskip}{0.0cm} 
	\setlength{\belowcaptionskip}{0.0cm}
	\caption{Time evolution of the maximum-norm (left), energy (middle), and time stepsizes (right)}
\end{figure}

\subsection{3D simulations}
In the last example, some three-dimensional simulations are reported for the tFAC model with $\alpha = 0.5$ and $\varepsilon = 0.01$. The phase-field variable is initialized with uniformly distributed random values in the interval $[-0.9,0.9]$. The computational domain is chosen as $\Omega = (-0.5,0.5)^3$. The proposed adaptive IMEX-Alikhanov scheme is employed with parameters $M = 100$, $ \tau_{\max} = 0.5 $, $ \tau_{\min} = 0.005 $, and $ \eta = 10^{7} $.

\begin{figure}[!t]
	\centering
	{
		\begin{minipage}[t]{0.24\linewidth}
			\centering
			\includegraphics[width=1.43in]{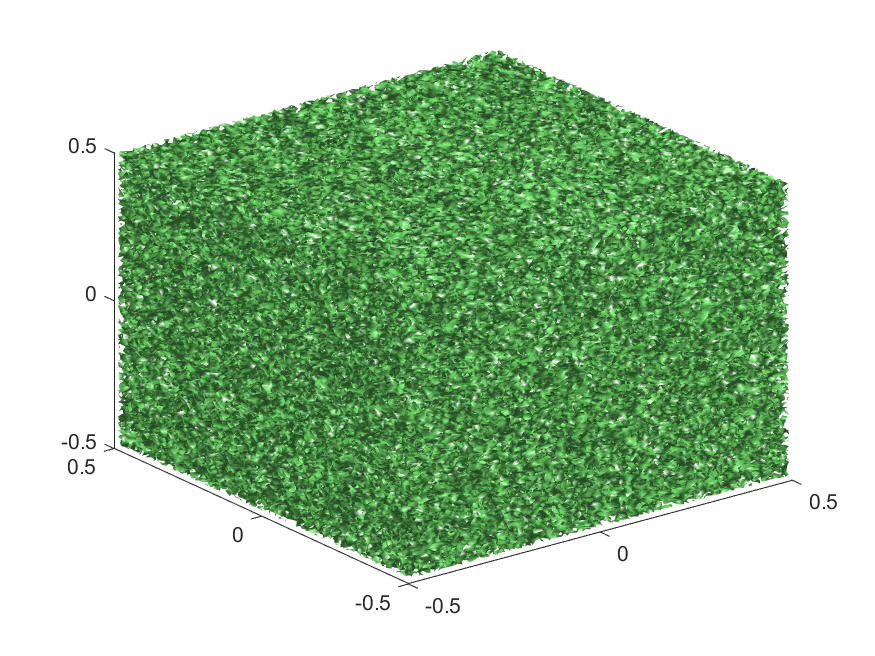}
			\includegraphics[width=1.43in]{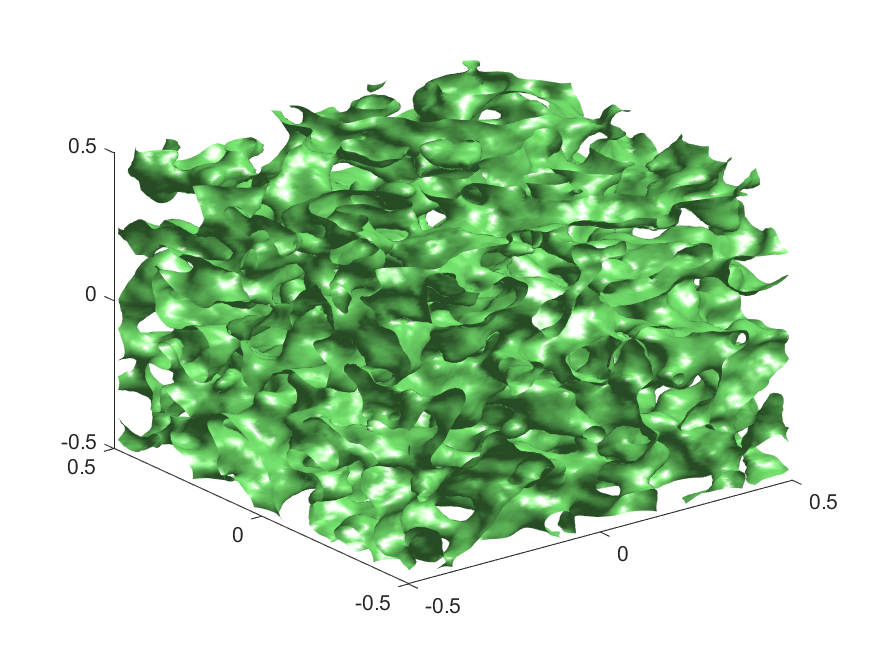}
			\includegraphics[width=1.43in]{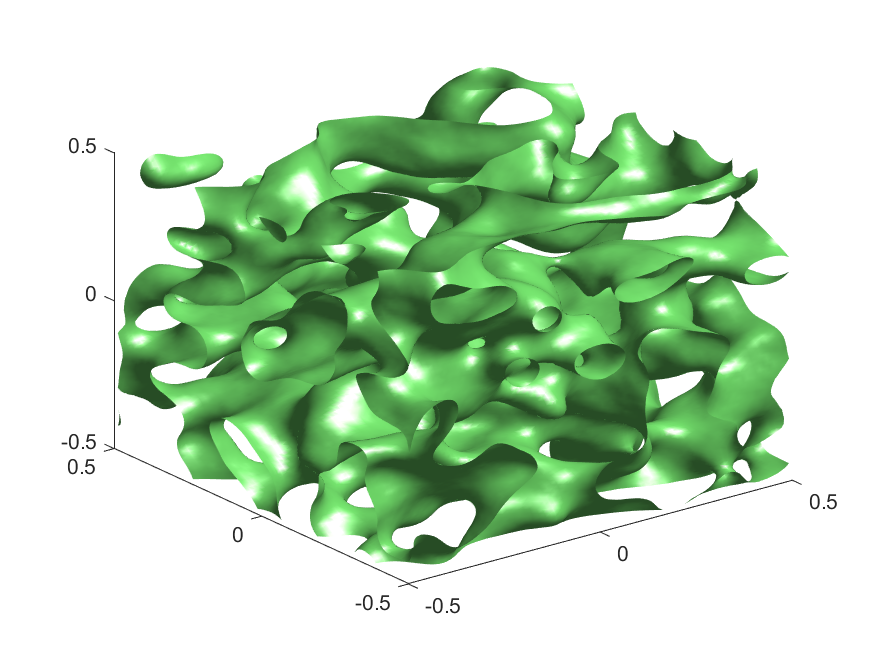}
			\includegraphics[width=1.43in]{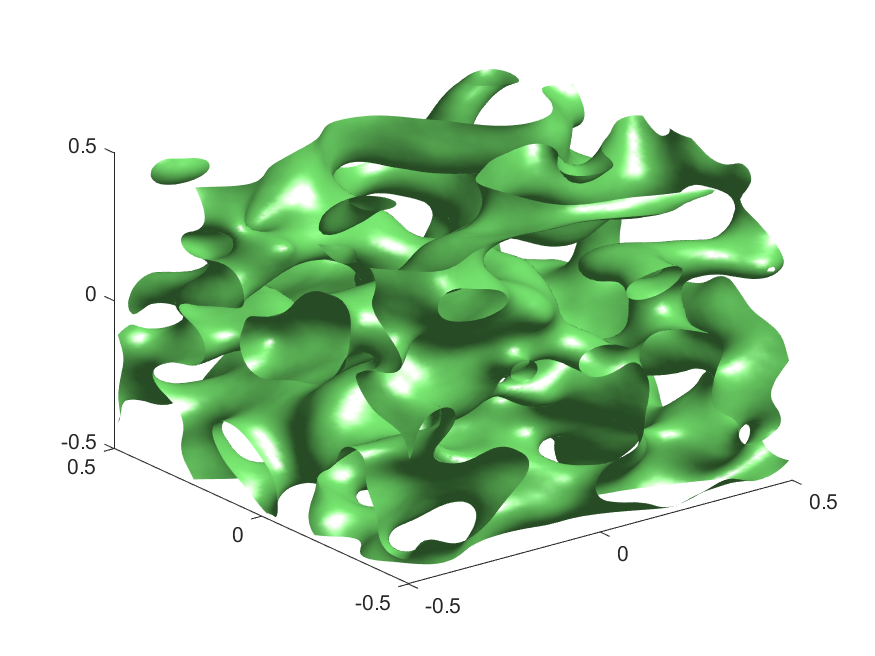}
			\includegraphics[width=1.43in]{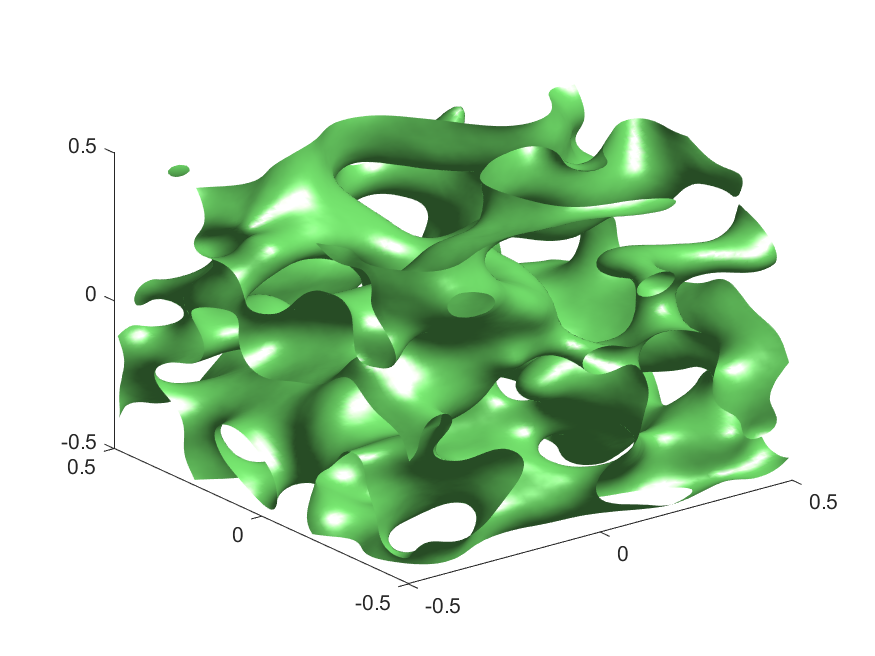}
		\end{minipage}%
	}%
	{
		\begin{minipage}[t]{0.24\linewidth}
			\centering
			\includegraphics[width=1.43in]{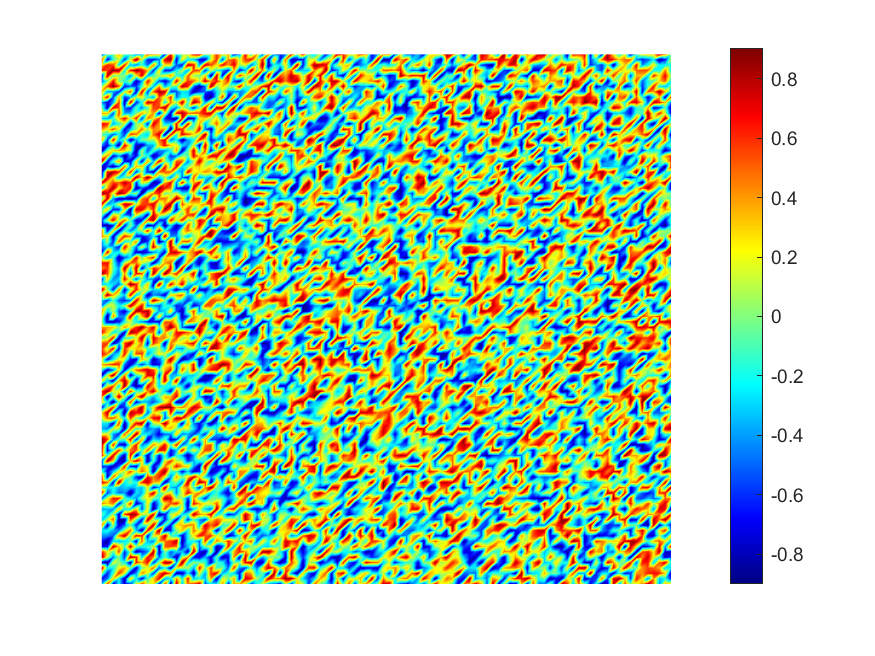}
			\includegraphics[width=1.43in]{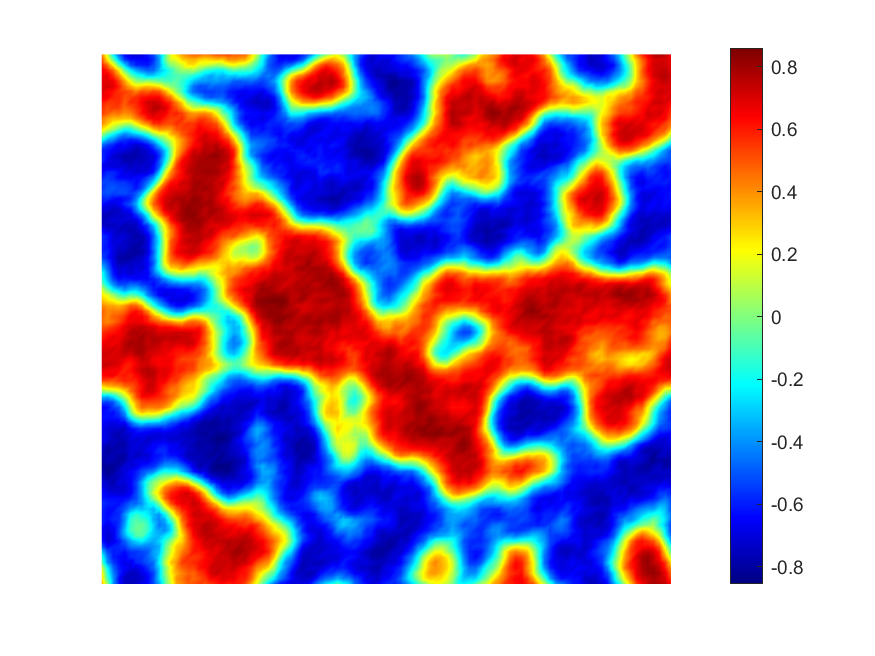}
			\includegraphics[width=1.43in]{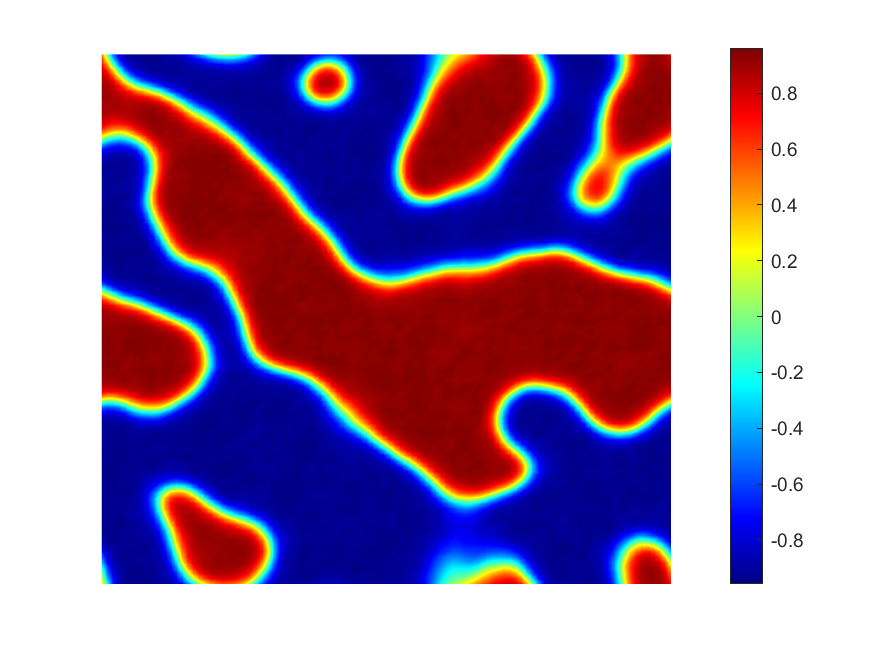}
			\includegraphics[width=1.43in]{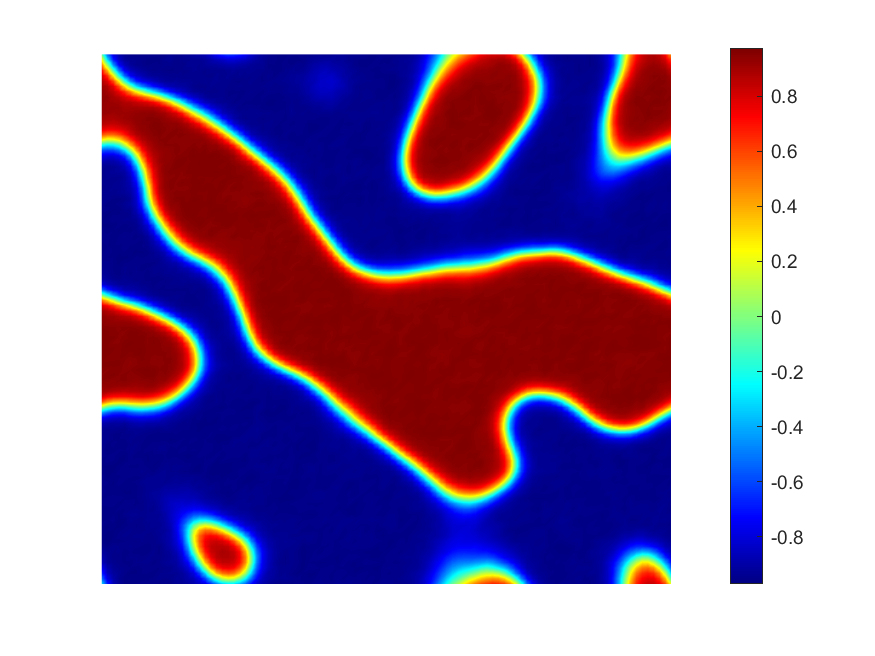}
			\includegraphics[width=1.43in]{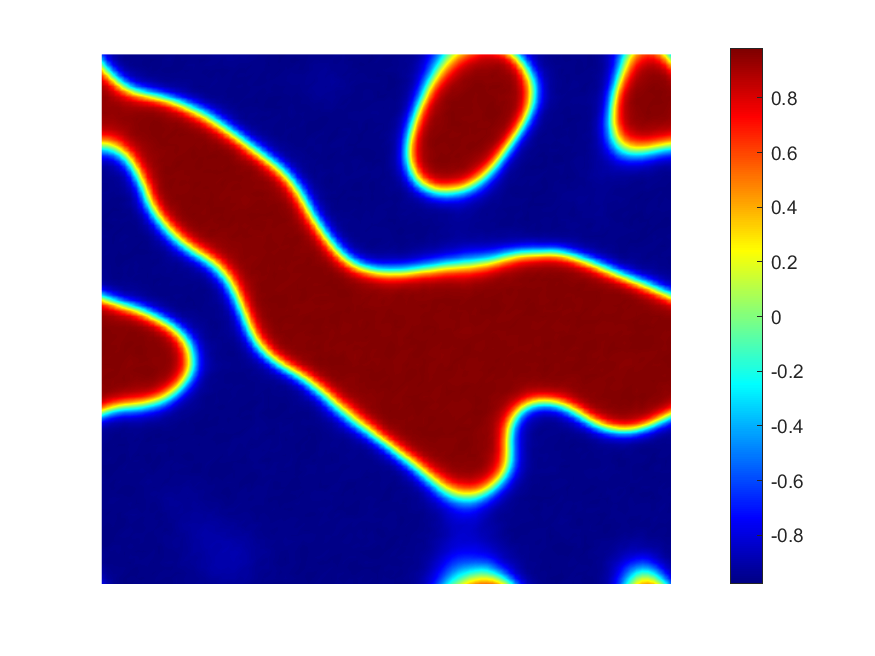}
		\end{minipage}%
	}%
	{
		\begin{minipage}[t]{0.24\linewidth}
			\centering
			\includegraphics[width=1.43in]{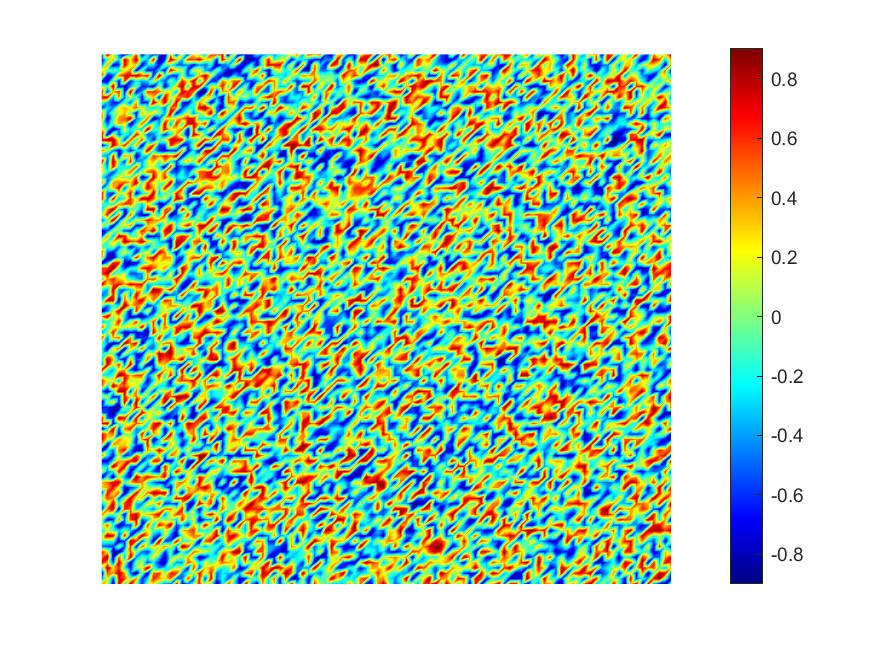}
			\includegraphics[width=1.43in]{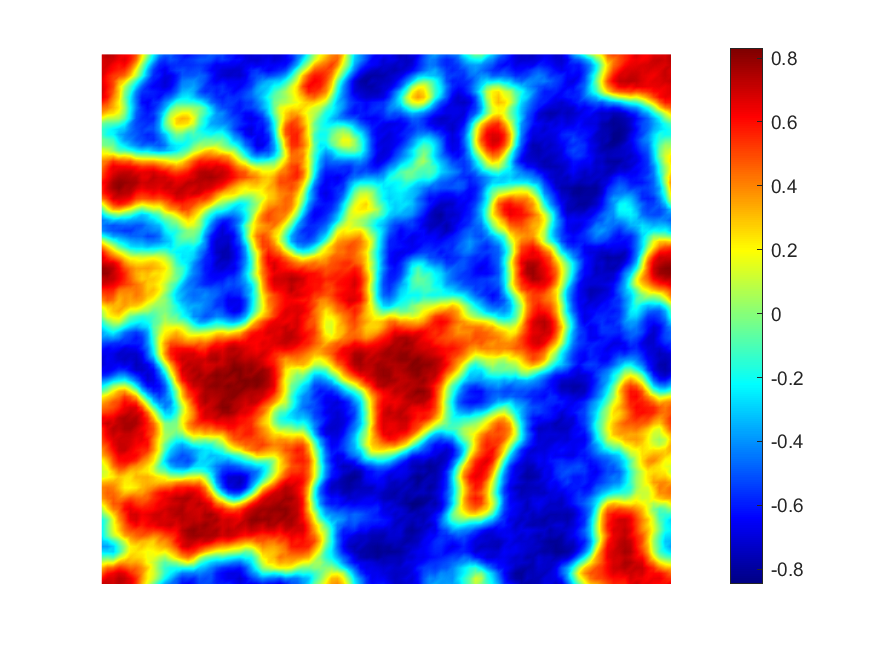}
			\includegraphics[width=1.43in]{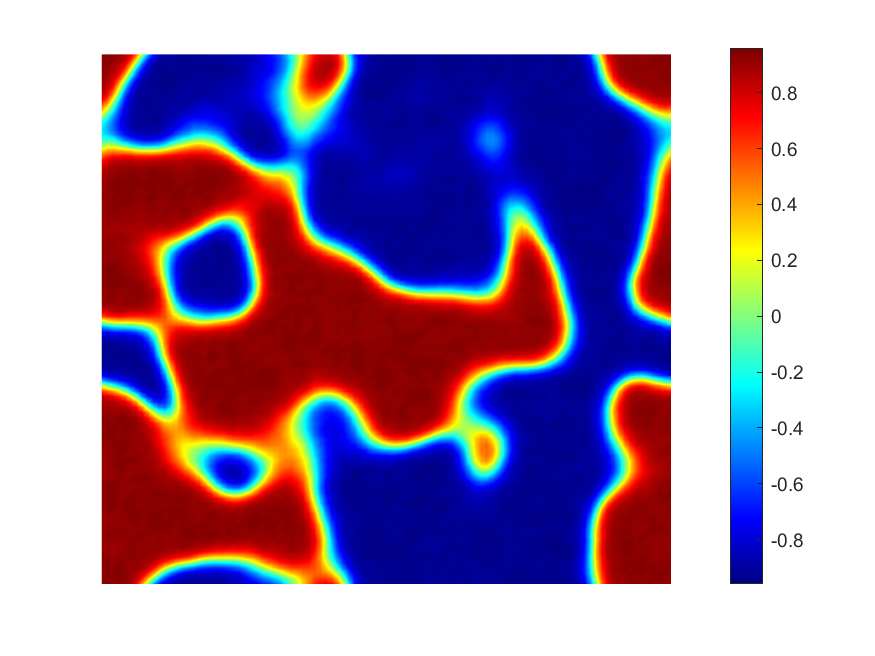}
			\includegraphics[width=1.43in]{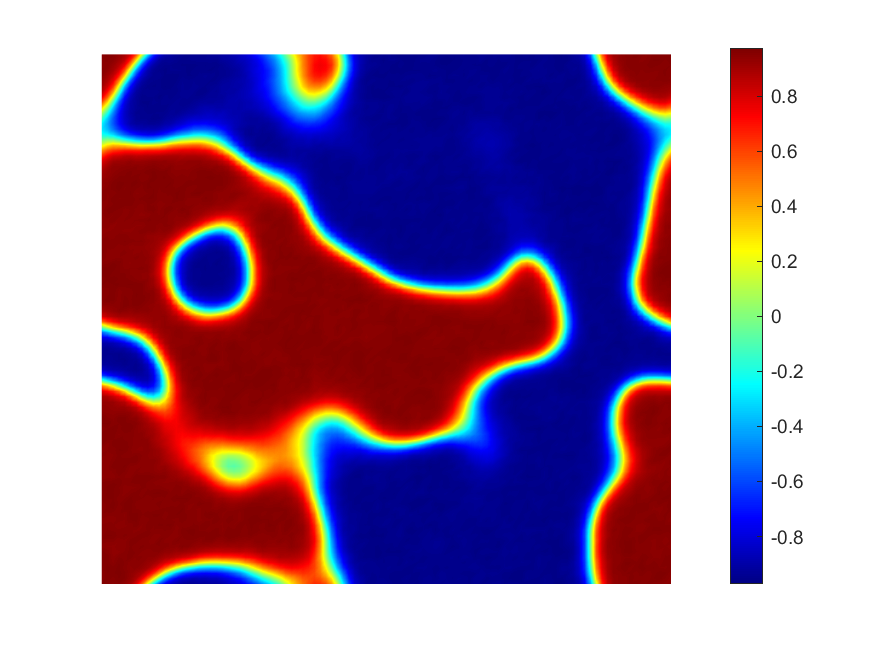}
			\includegraphics[width=1.43in]{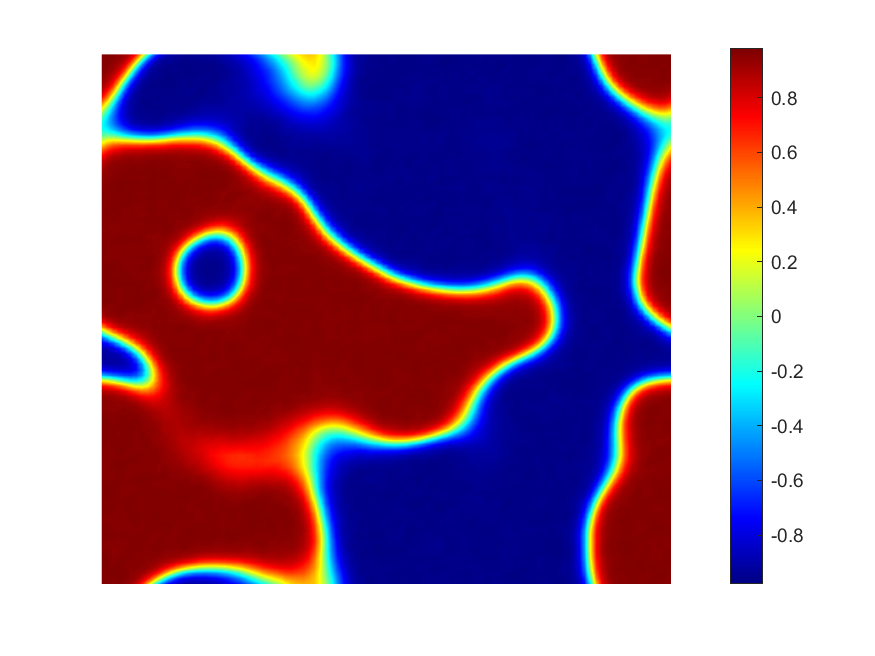}
		\end{minipage}%
	}%
	{
		\begin{minipage}[t]{0.24\linewidth}
			\centering
			\includegraphics[width=1.43in]{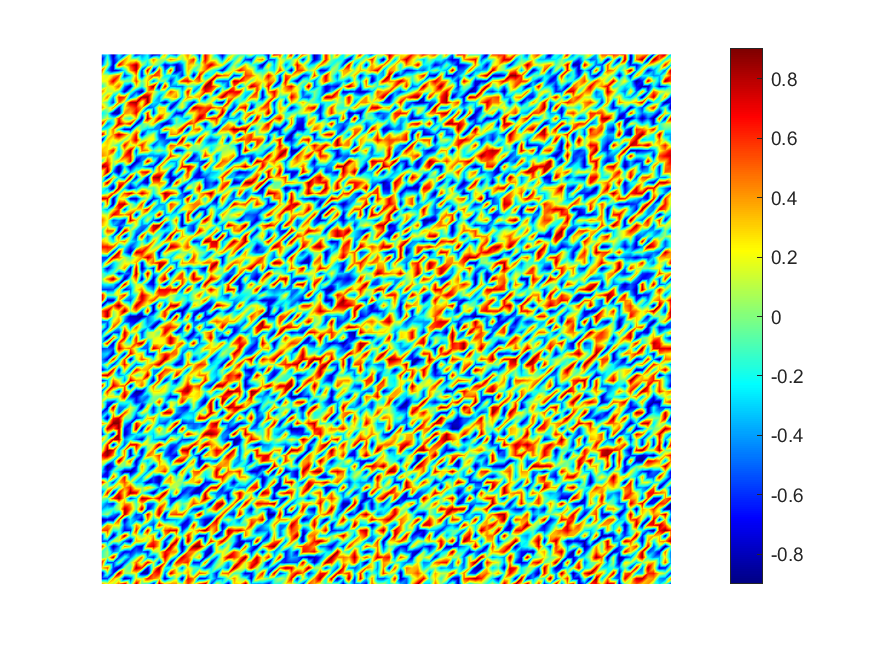}
			\includegraphics[width=1.43in]{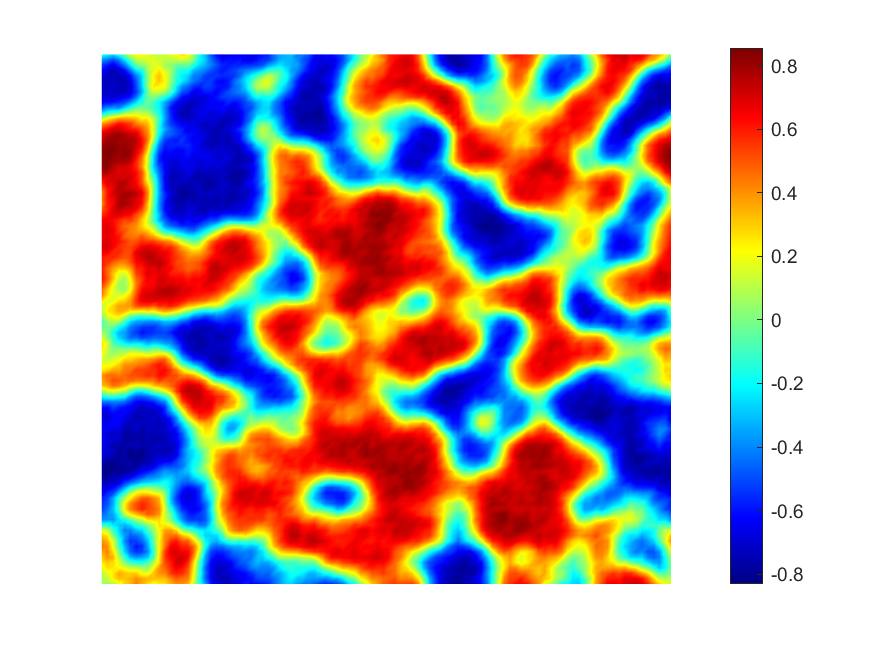}
			\includegraphics[width=1.43in]{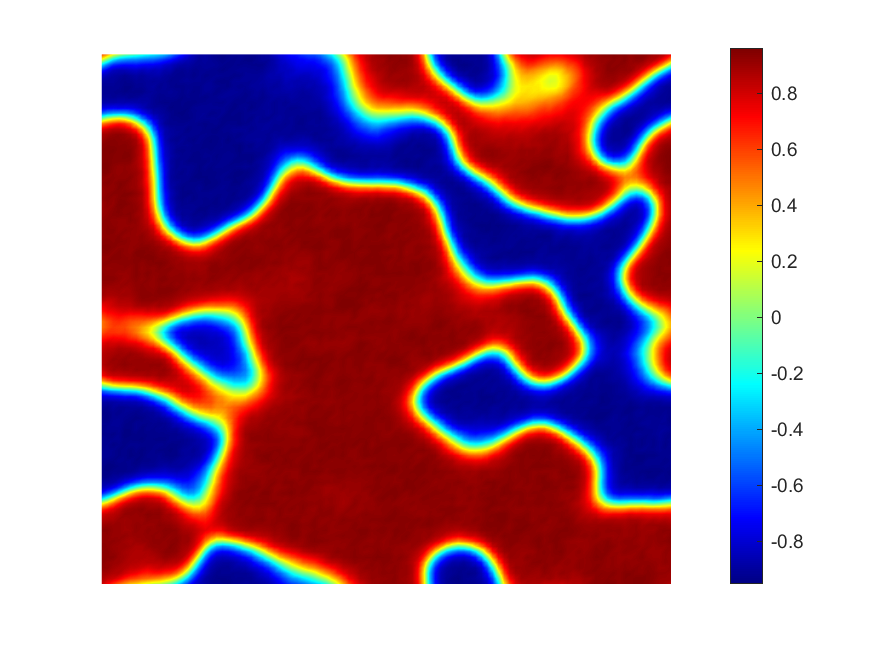}
			\includegraphics[width=1.43in]{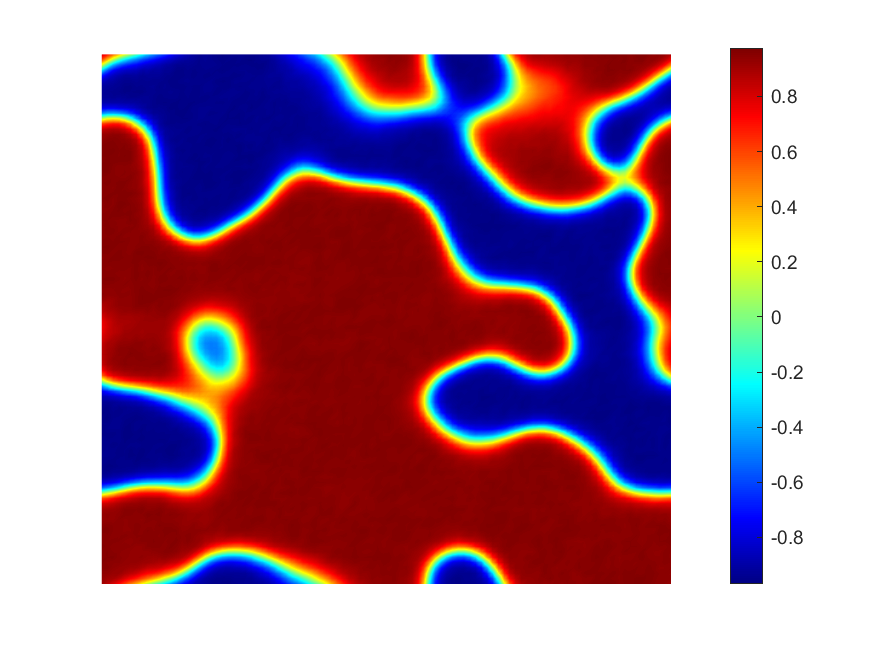}
			\includegraphics[width=1.43in]{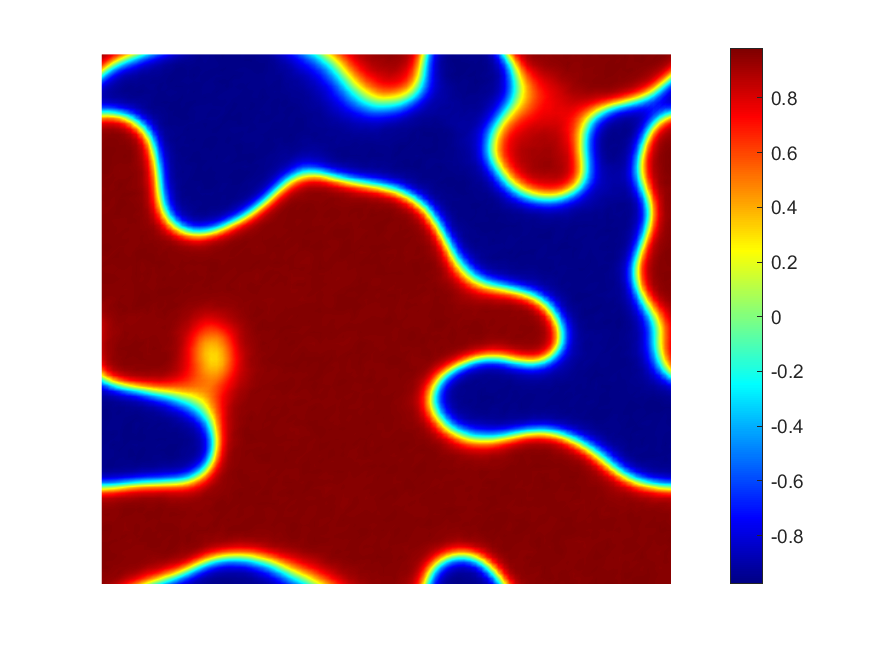}
		\end{minipage}%
	}%
	\setlength{\abovecaptionskip}{0.0cm} 
	\setlength{\belowcaptionskip}{0.0cm}
	\caption{The iso-surfaces of $\phi=0$ and the snapshots of the cross-sections, $x=0,y=0,z=0$ (from left to right) at times $t = 0,5.00,25.01,50.16$ and $80.00$ for the 3D tFAC model}	\label{figEx5_1}
\end{figure}

Fig. \ref{figEx5_1} shows the iso-surfaces formed by the zero level set (i.e., $\phi=0$) of the numerical solution, along with cross-sectional snapshots at $x=0$, $y=0$, and $z=0$ at different time instants. From these results, the ordering and grain coarsening process can be clearly observed during the evolution. The corresponding evolution of the maximum norm, energy, and time stepsizes is presented in Fig. \ref{figEx5_2}. It is again evident that the proposed scheme preserves both energy stability and the MBP well, while the time stepsize is adaptively adjusted in response to variations in the energy.

\begin{figure}[!htbp]
	\vspace{-10pt}
	\centering
	\subfigure[maximum-norm of $\phi$]
	{
		\includegraphics[width=0.325\textwidth]{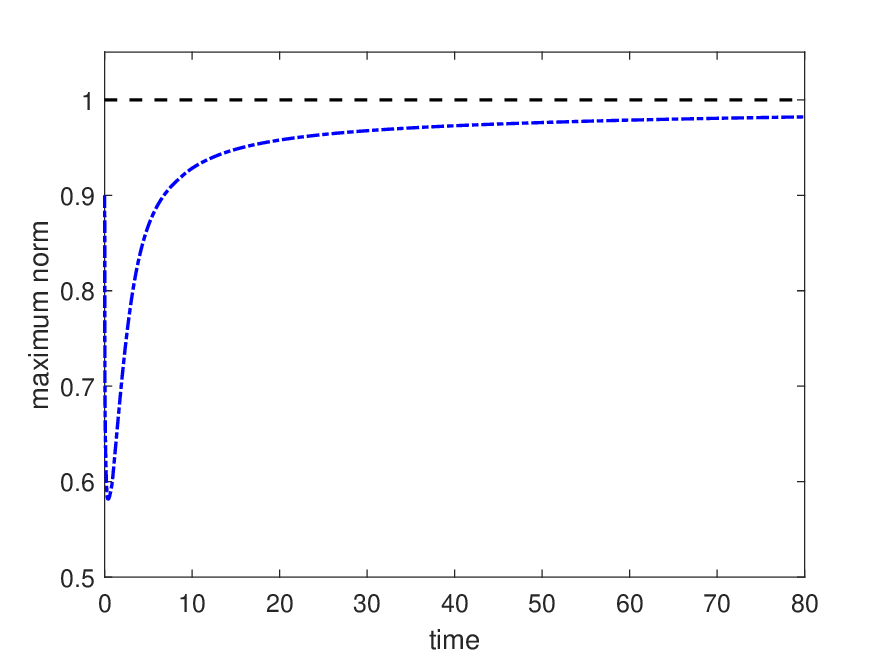}
		\label{figEx5_2a}
	}%
	\subfigure[energy]
	{
		\includegraphics[width=0.325\textwidth]{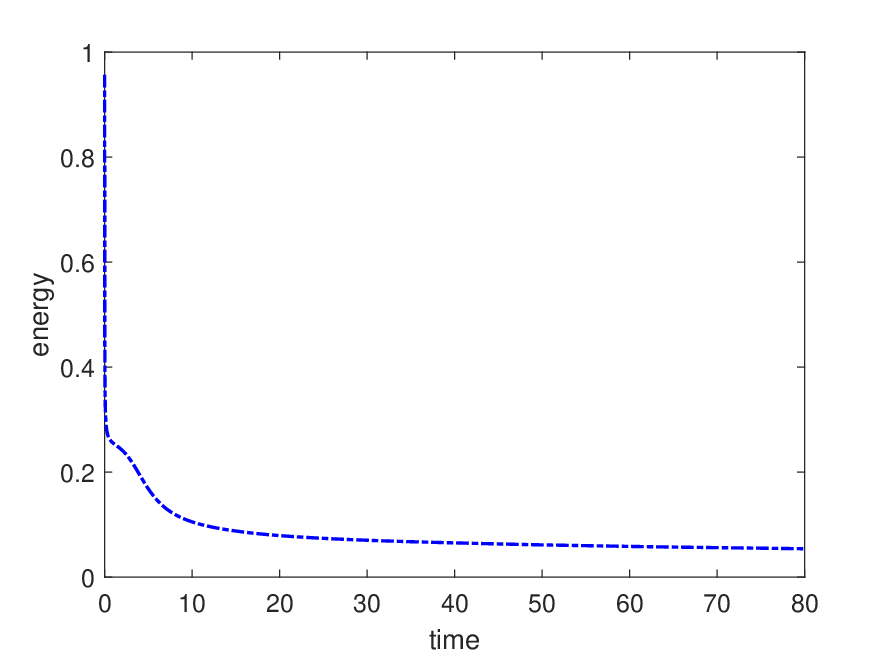}
		\label{figEx5_2b}
	}%
	\subfigure[time stepsizes]
	{
		\includegraphics[width=0.325\textwidth]{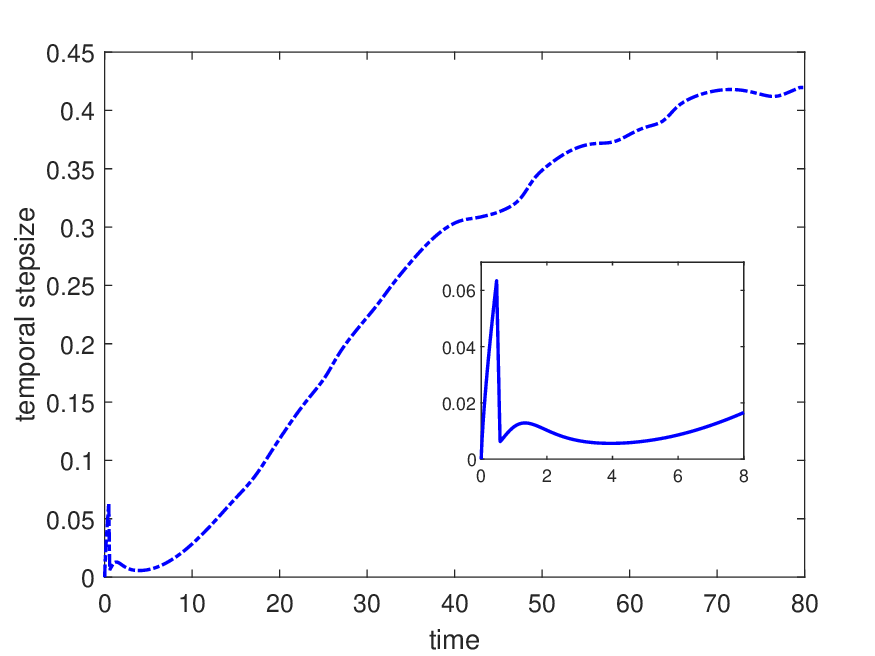}
		\label{figEx5_2c}
	}%
	\setlength{\abovecaptionskip}{0.0cm} 
	\setlength{\belowcaptionskip}{0.0cm}
	\caption{Time evolutions of the maximum-norm bound (left), energy (middle), and time stepsizes (right)}
	\label{figEx5_2}
\end{figure}

\section{ Concluding remarks }\label{Sec:Clu}
In this paper, we have developed a variable time-step, second-order linear IMEX-Alikhanov scheme for the tFAC model based on an appropriate stabilization technique. In particular, the nonlinear potential is treated explicitly via a second-order extrapolation with preprocessing to ensure that the proposed scheme preserves the discrete MBP. Moreover, rigorous maximum-norm error estimates and discrete energy stability analysis have also been established. As our analysis is built on variable time stepsize, the intrinsic initial singularity can be effectively resolved by employing nonuniform meshes such as the graded mesh, and furthermore, the long-term dynamical evolution governed by the tFAC model can be accurately and efficiently simulated using an adaptive time-stepping strategy. Extensive numerical experiments confirm the reliability and efficiency of the proposed approach. 

Finally, we briefly comment on the dependence of the time stepsize conditions (see \eqref{MBP:L21_tau}, \eqref{Condi:Cover}, and \eqref{Condi:energy}) arising in the theoretical analysis on the fractional order $\alpha$. Due to the presence of the exponent $1/\alpha$, these conditions can become extremely restrictive as $ \alpha \rightarrow 0^{+} $. Similar conditions also appear in existing numerical methods for linear and nonlinear time-fractional model problems (see \cite{JCP_Liao_2020,SINUM_Liao_2018,SISC_Liao_2021,JSC_Fu_2023,JSC_Huang_2022,JCP_Zhang_2025}). Consequently, removing this restrictive condition for small $\alpha$ in the theoretical analysis is a meaningful and interesting endeavor.

\section*{CRediT authorship contribution statement}
			\textbf{Bingyin Zhang}: Methodology, Formal analysis, Software, Writing-Original draft.
			\textbf{Ao Zhang}: Methodology, Software, Writing-Original draft.
			\textbf{Hongfei Fu}: Conceptualization, Supervision, Writing-Reviewing and Editing, Funding acquisition.

\section*{Declaration of competing interest} The authors declare that they have no competing interests.
			
\section*{Data availability}   Data will be made available on request.

\section*{Acknowledgements}
This work was supported in part by the National Natural Science Foundation of China (Nos. 11971482, 12131014), by the Natural Science Foundation of Shandong Province (No. ZR2024MA023), and by the Shandong Provincial Key Laboratory of Stochastic System Control and Scientific Computing.


\bibliographystyle{spmpsci}      
\bibliography{Ref_tAC}

@article{JSC_Chen_2019,
	author = {H Chen and M Stynes},
	title = {Error Analysis of a Second-order Method on Fitted Meshes
	for a Time-Fractional Diffusion Problem},
	journal = {J. Sci. Comput.},
	volume = {79},
	pages = {624-647},
	year = {2019},
}

@article{JSC_Huang_2022,
	author = {C Huang and M Stynes},
	title = {A Sharp $\alpha$-robust {$ L^{\infty}( H^{1} ) $} Error Bound for a Time-Fractional
{A}llen--{C}ahn Problem Discretised by the {A}likhanov {$L2$-$1_{\sigma}$} scheme and a standard {FEM}},
	journal = {J. Sci. Comput.},
	volume = {91},
	pages = {43},
	year = {2022},
}

@article{JCAM_Hu_2026,
	author = {D Hu and M Chen and H Jiang and H Huang},
	title = {Energy dissipation and maximum-bound principle of the variable-step {$L2$-$1_{\sigma}$} scheme for the time-fractional {A}llen--{C}ahn
	equation with general nonlinear potential},
	journal = {J. Comput. Appl. Math.},
	volume = {475},
	pages = {117054},
	year = {2026},
}

@article{JSC_Huang_2023,
	author = {G Zhang and C Huang and A.A. Alikhanov and B Yin},
	title = {A High-Order Discrete Energy Decay and Maximum-Principle Preserving Scheme for Time Fractional {Allen--Cahn} Equation},
	journal = {J. Sci. Comput.},
	volume = {96},
	pages = {39},
	year = {2023},
}

@article{ACM_Ji_2020,
	author = {B Ji and H Liao and L Zhang},
	title = {Simple maximum principle preserving time-stepping methods for time-fractional {Allen--Cahn} equation},
	journal = {Adv. Comput. Math.},
	volume = {46},
	pages = {37},
	year = {2020},
}

@article{SISC_Liao_2021,
	author = {H Liao and T Tang and T Zhou},
	title = {AN ENERGY STABLE AND MAXIMUM BOUND PRESERVING SCHEME WITH VARIABLE TIME STEPS FOR TIME FRACTIONAL
	{A}LLEN--{C}AHN EQUATION},
	journal = {SIAM J. Sci. Comput.},
	volume = {43},
	pages = {A3503--A3526},
	year = {2021},
}

@article{SISC_Ji_2020,
	author = {B Ji and H Liao and Y Gong and L Zhang},
	title = {ADAPTIVE SECOND-ORDER {C}RANK--{N}ICOLSON TIME-STEPPING SCHEMES FOR TIME-FRACTIONAL MOLECULAR BEAM EPITAXIAL GROWTH MODELS},
	journal = {SIAM J. Sci. Comput.},
	volume = {42},
	pages = {B738--B760},
	year = {2020},
}

@article{SISC_Hou_2021,
	author = {D Hou and C Xu},
	title = {HIGHLY EFFICIENT AND ENERGY DISSIPATIVE SCHEMES FOR THE TIME FRACTIONAL {A}LLEN--{C}AHN EQUATION},
	journal = {SIAM J. Sci. Comput.},
	volume = {43},
	pages = {A3305--A3327},
	year = {2021},
}

@article{SISC_Zhao_2024,
	author = {R Qi and X Zhao},
	title = {A UNIFIED DESIGN OF ENERGY STABLE SCHEMES WITH VARIABLE STEPS FOR FRACTIONAL GRADIENT FLOWS AND NONLINEAR INTEGRO-DIFFERENTIAL EQUATIONS},
	journal = {SIAM J. Sci. Comput.},
	volume = {46},
	pages = {A130--A155},
	year = {2024},
}

@article{JCP_Quan_2022,
	author = {C Quan and B Wang},
	title = {Energy stable {$L2$} schemes for time-fractional phase-field equations},
	journal = {J. Comput. Phys.},
	volume = {458},
	pages = {111085},
	year = {2022},
}

@article{JCP_Shen_2018,
	author = {J Shen and J Xu and J Yang},
	title = {The scalar auxiliary variable ({SAV}) approach for gradient flows},
	journal = {J. Comput. Phys.},
	volume = {353},
	pages = {407--416},
	year = {2018},
}

@article{JCP_Wang_2017,
	author = {Z Li and H Wang and D Yang},
	title = {A space-time fractional phase-field model with tunable sharpness and decay behavior and its efficient numerical simulation},
	journal = {J. Comput. Phys.},
	volume = {347},
	pages = {20--38},
	year = {2017},
}

@article{CPC_Wang_2019,
	author = {L. Chen and J. Zhang and J. Zhao and W. Cao and H. Wang and J. Zhang},
	title = {An accurate and efficient
	algorithm for the time-fractional molecular beam epitaxy model with slope selection},
	journal = {Comput. Phys. Commun.},
	volume = {245},
	pages = {106842},
	year = {2019},
}

@article{PRL_Chepizhko_2013,
	author = {O. Chepizhko and F. Peruani},
	title = {Diffusion, subdiffusion, and trapping of active particles in heterogeneous media},
	journal = {Phys. Rev. Lett.},
	volume = {111},
	pages = {160604},
	year = {2013},
}

@article{WRR_Schumer_2003,
	author = {R. Schumer and D. Benson and M.M. Meerschaert and B. Baeumer},
	title = {Fractal mobile/immobile solute transport},
	journal = {Water Resour. Res.},
	volume = {39},
	pages = {1296},
	year = {2003},
}

@article{JCP_Zhokn_2017,
	author = {A. Zhokh and P. Strizhak},
	title = {Non-{F}ickian diffusion of methanol in mesoporous media: geometrical restrictions or adsorption-induced?},
	journal = {J. Chem. Phys.},
	volume = {146},
	pages = {124704},
	year = {2017},
}

@article{MS_Sharma_2015,
	author = {A. Sharma and S. Namsani and J.K. Singh},
	title = {Molecular simulation of shale gas adsorption and diffusion in inorganic nanopores},
	journal = {Mol. Simul.},
	volume = {41},
	pages = {414--422},
	year = {2015},
}

@article{Acta_Allen_1979,
	author = {S Allen and J Cahn},
	title = {A microscopic theory for antiphase boundary motion and its application to antiphase domain coarsening},
	journal = {Acta Metall.},
	volume = {27},
	pages = {1085--1095},
	year = {1979},
}

@Article{JCP_2006_Yang,
	title =	 {Numerical simulations of jet pinching-off and drop formation using an energetic variational phase-field method},
	author =	 {Yang, X. and Feng, J.J. and Liu, C. and Shen, J.},
	journal =	 {J. Comput. Phys.},
	volume =	 218,
	year =	 2006,
	pages =	 {417--428},
}

@Article{CiCP_2012_Kim,
	title =	 {Phase-Field Models for Multi-Component Fluid Flows},
	author =	 {Kim, J.},
	journal =	 {Commun. Comput. Phys.},
	volume =	 12,
	year =	 2012,
	pages =	 {613--661},
}

@Article{SISC_2010_Shen,
	title =	 {A PHASE-FIELD MODEL AND ITS NUMERICAL APPROXIMATION FOR TWO-PHASE INCOMPRESSIBLE FLOWS WITH DIFFERENT DENSITIES AND VISCOSITIES},
	author =	 {Shen, J. and Yang, X.},
	journal =	 {SIAM J. Sci. Comput.},
	volume =	 32,
	year =	 2010,
	pages =	 {1159--1179},
}

@Article{NM_2003_Feng,
	title =	 {Numerical analysis of the {A}llen--{C}ahn equation and approximation for mean curvature flows},
	author =	 {Feng, X.B. and Prohl, A.},
	journal =	 {Numer. Math.},
	volume =	 94,
	year =	 2003,
	pages =	 {33--65},
}

@Article{JDG_1993_Ilmanen,
	title =	 {Convergence of the {A}llen--{C}ahn equation to Brakke's motion by mean curvature},
	author =	 {T. Ilmanen},
	journal =	 {J. Differential Geom.},
	volume =	 38,
	year =	 1993,
	pages =	 {417--461},
}

@Article{ANM_2004_Benes,
	title =	 {Geometrical image segmentation by the {A}llen--{C}ahn equation},
	author =	 {M. Benes and V. Chalupecky and K. Mikula},
	journal =	 {Appl. Numer. Math.},
	volume =	 51,
	year =	 2004,
	pages =	 {187--205},
}

@article{LiuQiaoZhang,
	author = {Liu, C. and Qiao, Z. and Zhang, Q.},
	title = {Two-Phase Segmentation for Intensity Inhomogeneous Images by the {A}llen--{C}ahn Local Binary Fitting Model},
	journal = {SIAM J. Sci. Comput.},
	volume = {44},
	number = {1},
	pages = {B177-B196},
	year = {2022}
}

@Article{JCP_Zhang_2025,
	title =	 {High-order nonuniform time-stepping and {MBP}-preserving linear schemes for the time-fractional {Allen--Cahn} equation},
	author =	 {B Zhang and H Wang and H Fu},
	journal =	 {J. Comput. Phys.},
	volume =	 551,
	year =	 2026,
	pages =	 {114694},
}

@Article{SINUM_Mustapha_2014,
	title =	 {A DISCONTINUOUS {P}ETROV-{G}ALERKIN METHOD FOR
	TIME-FRACTIONAL DIFFUSION EQUATIONS},
	author =	 {K Mustapha and B Abdaallah and K Furati},
	journal =	 {SIAM J. Numer. Anal.},
	volume =	 52,
	year =	 2014,
	pages =	 {2512--2529},
}

@Article{IMA_Jin_2016,
	title =	 {An analysis of the {$L1$} scheme for the subdiffusion equation with nonsmooth data},
	author =	 {B Jin and R  Lazarov and Z Zhou},
	journal =	 {IMA J. Numer. Anal.},
	volume =	 36,
	year =	 2016,
	pages =	 {197--221},
}

@Article{SINUM_Stynes_2017,
	title =	 {Error analysis of a finite difference method on graded meshes for a time-fractional diffusion equation},
	author =	 {M Stynes and E O'Riordan and J Gracia},
	journal =	 {SIAM J. Numer. Anal.},
	volume =	 55,
	year =	 2017,
	pages =	 {1057--1079},
}

@Article{SISC_Qiao_2011,
	title =	 {An adaptive time-stepping strategy for the molecular beam epitaxy models},
	author =	 {Z Qiao and Z Zhang and T Tang},
	journal =	 {SIAM J. Sci. Comput.},
	volume =	 33,
	year =	 2011,
	pages =	 {1395--1414},
}

@Article{IMA_Chen_2021,
	title =	 {Blow-up of error estimates in time-fractional initial-boundary value problems},
	author =	 {Hu Chen and Martin Stynes},
	journal =	 {IMA J. Numer. Anal.},
	volume =	 41,
	year =	 2021,
	pages =	 {974--997},
}

@article{JSC_Fu_2023,
	author = {H Fu and B Zhang and X Zheng},
	title = {A High-Order Two-Grid Difference Method for Nonlinear Time-Fractional Biharmonic Problems and Its Unconditional $\alpha$-Robust Error Estimates},
	journal = {J. Sci. Comput.},
	volume = {96},
	pages = {54},
	year = {2023},
}

@article{SINUM_Liao_2019,
	author = {H Liao and W Mclean and J Zhang},
	title = {A discrete {G}r\"onwall inequality with applications to numerical schemes for subdiffusion problems},
	journal = {SIAM J. Numer. Anal.},
	volume = {57},
	pages = {218--237},
	year = {2019},
}

@article{JSC_Liao_2024,
	author = {H Liao and X Zhu and H Sun},
	title = {Asymptotically Compatible Energy and Dissipation Law
	of the Nonuniform ${L}2$-$1_{\sigma}$ Scheme for Time Fractional
	{A}llen--{C}ahn Model},
	journal = {J. Sci. Comput.},
	volume = {99},
	pages = {46},
	year = {2024},
}

@article{JCP_Alikhanov_2015,
	author = {A.A Alikhanov},
	title = {A new difference scheme for the time fractional diffusion equation},
	journal = {J. Comput. Phys.},
	volume = {280},
	pages = {424--438},
	year = {2015},
}

@article{CiCP_Liao_2021,
	author = {H Liao and W Mclean and J Zhang},
	title = {A second-order scheme with nonuniform time steps for a linear reaction-subdiffusion problem},
	journal = { Commun. Comput. Phys.},
	volume = {30},
	pages = {567--601},
	year = {2021},
}

@article{JCP_Liao_2020,
	author = {H Liao and T Tang and T Zhou},
	title = {A second-order and nonuniform time-stepping
	maximum-principle preserving scheme for time-fractional
	{A}llen--{C}ahn equations},
	journal = { J. Comput. Phys.},
	volume = {414},
	pages = {109473},
	year = {2020},
}

@article{SISC_Tang_2019,
	author = {Tang, Tao and Yu, Haijun and Zhou, Tao},
	title = {On Energy Dissipation Theory and Numerical Stability for Time-Fractional Phase-Field Equations},
	journal = {SIAM J. Sci. Comput.},
	volume = {41},
	number = {6},
	pages = {A3757-A3778},
	year = {2019},
}

@article{JSC_Du_2020,
	author = {Du, Qiang and Yang, Jiang and Zhou, Zhi},
	title = {Time-Fractional {A}llen--{C}ahn Equations: Analysis and Numerical Methods},
	journal = {J. Sci. Comput.},
	volume = {85},
	pages = {42},
	year = {2020},
}

@Article{SINUM_Liao_2018,
	title =	 {Sharp error estimate of the nonuniform ${L}1$ formula for linear reaction-subdiffusion equations},
	author =	 {Honglin Liao and Dongfang Li and Jiwei Zhang},
	journal =	 {SIAM J. Numer. Anal.},
	volume =	 56,
	year =	 2018,
	pages =	 {1112--1133},
}

@Article{SIREV_Du_2021,
	title =	 {Maximum bound principles for a class of semilinear parabolic equations
	and exponential time-differencing schemes},
	author =	 {Qiang Du and Lili Ju and Xiao Li and Zhonghua Qiao},
	journal =	 {SIAM Rev.},
	volume =	 63,
	year =	 2021,
	pages =	 {317--359},
}

@Article{JCM_Tang_2016,
	title =	 {Implicit-explicit scheme for the {A}llen--{C}ahn equation preserves the maximum principle},
	author =	 {T Tang and J Yang},
	journal =	 {J. Comput. Math.},
	volume =	 34,
	year =	 2016,
	pages =	 {471--481},
}

@Article{JSC_ZFLX_2025,
	title =	 {Energy Dissipation Law and Maximum Bound Principle-Preserving Linear {BDF2} Schemes with Variable Steps for the {A}llen--{C}ahn Equation},
	author =	 {Zhang, B. and Fu, H. and Lan, R. and Xie, S.},
	journal =	 {J. Sci. Comput.},
	volume =	 105,
	year =	 2025,
	pages =	 {51},
}

\end{document}